\documentclass[12pt, reqno]{amsart}
\usepackage[margin=0.75in]{geometry}
\usepackage[
  bookmarks=true]{hyperref}
\usepackage[OT2, T1]{fontenc}
\usepackage[english]{babel}
\usepackage[utf8]{inputenc}
\usepackage{csquotes}
\usepackage[final]{microtype}
\usepackage{lmodern}
\usepackage{amsthm}
\usepackage{amssymb}
\usepackage{mathrsfs}
\usepackage{enumerate}
\usepackage{tikz-cd} 
\usepackage{tikz}
\usepackage{multicol}
\usepackage{multirow}
\usepackage{tikz-qtree}
\usepackage{rotating}
\usepackage{graphicx}
\usepackage{comment}
\usepackage{enumitem}
\usepackage{manfnt}
\usepackage{mathtools}
\usepackage{diagbox}
\usepackage{multirow}
\usepackage{booktabs}
\usepackage{listings} 
\usetikzlibrary{arrows,calc,matrix,trees,arrows.meta,positioning,decorations.pathreplacing,bending}

\newtheorem{theorem}{Theorem}[section]
\newtheorem*{theorem*}{Theorem}
\newtheorem*{goal*}{Goal}
\newtheorem*{question*}{Question}

\newtheorem{proposition}[theorem]{Proposition}
\newtheorem{lemma}[theorem]{Lemma}

\theoremstyle{definition}

\newtheorem{remark}[theorem]{Remark}
\newtheorem{definition}[theorem]{Definition}
\newtheorem{example}[theorem]{Example}

\newtheorem{condition}[theorem]{Condition}

\newcommand{\Oh}{\mathcal{O}} 
\newcommand{\Sel}{\mathrm{Sel}} 
\newcommand{\Gal}{\mathrm{Gal}} 

\newcommand{\genlegendre}[4]{%
  \genfrac{(}{)}{}{#1}{#3}{#4}%
  \if\relax\detokenize{#2}\relax\else_{\!#2}\fi
}

\DeclareSymbolFont{cyrletters}{OT2}{wncyr}{m}{n}
\DeclareMathSymbol{\Sha}{\mathalpha}{cyrletters}{"58}

\newcommand{\DK}[1]{\textcolor{purple}{$\clubsuit$ DK: #1 $\clubsuit$ }}

\usepackage{graphicx}
\title{Distribution of Selmer ranks in prime cyclic extensions}
\author{Daniel Keliher}
\address{Daniel Keliher, Department of Mathematics and Statistics, Middlebury College, Middlebury, VT 05735, USA.}
\email{dkeliher@middlebury.edu}

\author{Sun Woo Park}
\address{Sun Woo Park, Max Planck Institute for Mathematics,  Vivatsgasse 7, 53111 Bonn, Germany}
\email{s.park@mpim-bonn.mpg.de}

\begin{document}

\begin{abstract}

    Using modifications to work of Klagsbrun, Mazur, and Rubin, we study (assuming the Extended Riemann Hypothesis) the distribution of Selmer ranks of twist families of some given even-dimensional Galois modules satisfying some mild technical conditions. As a corollary, we study the probability with which a fixed elliptic curve gains (or does not gain) rank in $p$-cyclic extensions, obtaining bounds for this distribution. Likewise, for some superelliptic curves $C$, we bound the average size of $C(L)$ as $L$ ranges over $p$-cyclic extensions over a number field $K$ containing primitive $p$-th roots of unity. Lastly, we study the probability with which a fixed hyperelliptic curve gains (or does not gain) rank in quadratic extensions, also obtaining bounds for this distribution. In all three cases, the extensions under consideration are ordered by the product of ramified primes. 
\end{abstract}
\maketitle
\tableofcontents

\section{Introduction}


The average size of 2-Selmer groups in twist families of an elliptic curve in various regimes has been determined in works of Swinnerton-Dyer \cite{SDtwists}, Heath-Brown \cite{HeathBrown}, Kane \cite{Kane2013}, Klagsbrun, Mazur and Rubin \cite{KMR13, KMR14} and, most recently, Pan and Tian \cite{PT25}. Work of Smith \cite{Smith1, Smith2} computes the average size of $2^\infty$-Selmer groups in such twist families over number fields. Work of Poonen and Rains \cite{PR12} obtains the same distribution in a heuristic model obtained by modeling Selmer groups as random intersections of isotropic subspaces of a suitable quadratic space

In each of \cite{SDtwists, Kane2013, KMR13, KMR14}, the distribution of 2-Selmer groups is characterized as the stationary distribution of an irreducible, aperiodic Markov process over a countable state space. In \cite{KMR13, KMR14}, the average sizes of $p$-Selmer ranks of prime twists of 2-dimensional Galois modules are also computed. This latter distribution is with respect to a ``non-standard'' order given by the \textit{fan structure} (see  Section \ref{sec:KMRfan}). Our first goal is to obtain the same distribution in an alternative ordering; this is the content of Theorem \ref{thm:fanB}.

A natural application of the distribution of $p$-Selmer ranks is studying how often the rank of the fixed elliptic curve gains rank upon base extension to cyclic $p$-extensions of a number field $K$. Given such an extension $L/K$, there is a $(p-1)$-dimensional abelian variety $A_{L/K}$ such that 
$$\mathrm{rk}(E/L) - \mathrm{rk}(E/K) \leq (p-1)\dim_{\mathbb{F}_p}\mathrm{Sel}_{1-\sigma}(A_{L/K}/K);$$
see Example \ref{ex:Twists}.

Theorem \ref{thm:fanB} gives us access to the distribution of $\dim_{\mathbb{F}_p}\mathrm{Sel}_{1-\sigma_p}(A_{L/K}/K)$ as $L/K$ varies.   Let $\mathcal{F}_{p}(K;X)$ denote the set of isomorphism classes of cyclic $p$-extensions $F/K$ with relative discriminant ideal $\Delta_{F/K}$, for which $\prod_{\mathfrak{p} \mid \Delta_{F/K}} \mathrm{Nm}_{\mathbb{Q}}^K \mathfrak{p} \leq X$. Set $N_p(K; X) = \# \mathcal{F}_{p}(K;X)$. Then, as a consequence of Theorem \ref{thm:fanB}, we obtain the following.
\begin{theorem}\label{thm:intro}
    Assume the Extended Riemann Hypothesis. Fix a prime $p$, an elliptic curve $E/K$ for which $\mathrm{Gal}(K(E[p])/K) \supseteq \mathrm{SL}_2(\mathbb{F}_p)$, and a non-negative integer $r$. For any non-negative integer $j \geq 0$, we denote by $PR(j) := \prod_{k=0}^\infty \frac{1}{1 + p^{-k}} \prod_{k=1}^j \frac{p}{p^k-1}$.
    \begin{itemize}
        \item Suppose $p = 2$. If $r = 0$, then there exists an explicitly computable constant $\delta(E/K) \in [-1/2,1/2]$ depending only on $E$ and $K$ such that 
        \begin{equation}\label{eq:intromain1-2}
            \liminf_{X \to \infty} \frac{\#\{F \in \mathcal{F}_2(K;X) \mid \mathrm{rk}(E/F) - \mathrm{rk}(E/K) =0\}}{N_2(K, X)} \geq 2 \cdot \left(\frac{1}{2} + \delta(E/K) \right) \cdot PR(0).
        \end{equation}
        If $r > 0$, then
        \begin{align}\label{eq:intromain2-2}
        \begin{split}
            & \; \; \; \; \limsup_{X \to \infty} \frac{\#\{K \in \mathcal{F}_p(K;X) \mid \mathrm{rk}(E/F) - \mathrm{rk}(E/K) > (r-1) \}}{N_p(K, X)} \\
            &\leq \sum_{\substack{j=r, \\ j \equiv 0 \mod 2}}^\infty 2 \cdot \left(\frac{1}{2} + \delta(E/K) \right) \cdot PR(j) + \sum_{\substack{j=r, \\ j \equiv 1 \mod 2}}^\infty 2 \cdot \left(\frac{1}{2} - \delta(E/K) \right) \cdot PR(j).
        \end{split}
        \end{align}
        \item Suppose $p$ is odd. If $r=0$, then
        \begin{equation}\label{eq:intromain1}
            \liminf_{X \to \infty} \frac{\#\{F \in \mathcal{F}_p(K;X) \mid \mathrm{rk}(E/F) - \mathrm{rk}(E/K) =0\}}{N_p(K, X)} \geq PR(0).
        \end{equation}
        If $r > 0$, then
        \begin{equation}\label{eq:intromain2}
            \limsup_{X \to \infty} \frac{\#\{K \in \mathcal{F}_p(K;X) \mid \mathrm{rk}(E/F) - \mathrm{rk}(E/K) > (p-1) \cdot (r-1) \}}{N_p(K, X)} \leq \sum_{j=r}^\infty PR(j).
        \end{equation}
    \end{itemize}
\end{theorem}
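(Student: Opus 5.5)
The plan is to deduce Theorem \ref{thm:intro} from Theorem \ref{thm:fanB} (the distribution of $\dim_{\mathbb{F}_p}\mathrm{Sel}_{1-\sigma}$ in twist families, ordered by the product of ramified primes) together with the rank inequality of Example \ref{ex:Twists},
$$\mathrm{rk}(E/F) - \mathrm{rk}(E/K) \leq (p-1)\dim_{\mathbb{F}_p}\mathrm{Sel}_{1-\sigma}(A_{F/K}/K).$$
Here $A_{F/K}$ is the $(p-1)$-dimensional twist of $E$ attached to $F/K$. Its $(1-\sigma)$-torsion satisfies $A_{F/K}[1-\sigma]\cong E[p]$ as a $G_K$-module, and this identification does not depend on $F$. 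So as $F$ ranges over $\mathcal{F}_p(K;X)$, the Selmer groups $\mathrm{Sel}_{1-\sigma}(A_{F/K}/K)$ form exactly a twist family of the $2$-dimensional module $T=E[p]$, with local conditions changed at the ramified primes.

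First I verify the hypotheses of Theorem \ref{thm:fanB} for $T=E[p]$. The condition $\mathrm{Gal}(K(E[p])/K)\supseteq \mathrm{SL}_2(\mathbb{F}_p)$ gives irreducibility. It also gives the Chebotarev input needed for the Markov-chain (fan) argument: the existence of elements acting with prescribed fixed-space dimension, and the vanishing of $H^1(K(E[p])/K,E[p])$. The Weil pairing supplies the alternating self-duality, and ERH is used exactly as in Theorem \ref{thm:fanB}.

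Theorem \ref{thm:fanB} then gives the limiting density of $\{F: \dim \mathrm{Sel}_{1-\sigma}(A_{F/K}/K)=j\}$. For $p$ odd this density is $PR(j)$; the parity is not biased, since for odd $p$ the twisting is not by a quadratic character. For $p=2$ the density is $2(\tfrac12\pm\delta(E/K))\,PR(j)$, with the sign fixed by the parity of $j$. Here $\tfrac12+\delta(E/K)$ is the limiting proportion of quadratic twists whose $2$-Selmer rank has even parity. This is Klagsbrun--Mazur--Rubin's disparity constant, computed from local root-number data at the places of bad reduction, archimedean places, and places above $2$. Its existence in this ordering should come from the parity part of Theorem \ref{thm:fanB}.

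The remaining steps are set inclusions and a limit. For $r=0$: if $\dim\mathrm{Sel}=0$ then $\mathrm{rk}(E/F)=\mathrm{rk}(E/K)$. Hence the liminf of the rank-zero-growth proportion is at least the density of $\dim\mathrm{Sel}=0$, which is $PR(0)$, or $2(\tfrac12+\delta)PR(0)$ when $p=2$. For $r>0$: if $\mathrm{rk}(E/F)-\mathrm{rk}(E/K)>(p-1)(r-1)$, then $(p-1)\dim\mathrm{Sel}>(p-1)(r-1)$, so $\dim\mathrm{Sel}\ge r$. The limsup is therefore bounded by the limsup of the density of $\{\dim\mathrm{Sel}\ge r\}$.

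The one nontrivial point is passing from the pointwise densities to this tail. Since the limiting densities sum to $1$, one has
$$\limsup_X \frac{\#\{\dim\ge r\}}{N}=1-\liminf_X\frac{\#\{\dim<r\}}{N}=1-\sum_{j<r}(\text{density } j)=\sum_{j\ge r}(\text{density } j).$$
This works because the set $\{\dim<r\}$ is a finite union. Splitting the tail by the parity of $j$ gives the stated formula for $p=2$.

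The main obstacle is confirming that Theorem \ref{thm:fanB} really applies to the family $\{A_{F/K}\}$. This needs two facts. First, the Selmer structure of $A_{F/K}[1-\sigma]$ at primes ramified in $F$ must be the transverse/twisted local condition used in that theorem. Second, the ordering by $\prod_{\mathfrak p\mid\Delta_{F/K}}\mathrm{Nm}\,\mathfrak p$ must match the counting measure used there, including weighting by the number of cyclic $p$-extensions with a given ramification set and the restrictions at primes above $p$ and at bad primes. I would handle both by restricting to primes away from a finite set $\Sigma$, using the local computations of Mazur--Rubin for $A_{F/K}$, and checking that the contribution of extensions ramified in $\Sigma$ is absorbed into finitely many equidistributed subfamilies.
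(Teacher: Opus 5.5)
Your proposal is correct and is essentially the paper's argument: the paper likewise combines the rank bound of Example \ref{ex:Twists} (via the identification \eqref{eq:SameSelmer}), the character-to-field count \eqref{eq:CtoN}, the $\mathcal{C}_K(X)$ statements of Theorem \ref{thm:fanB}, and the explicit form of $E^\pm$ from Definition \ref{def:Markov}; your set inclusions and tail computation for $r>0$ just spell out steps the paper leaves implicit. One small correction: for odd $p$ the absence of parity bias is not because the twist is non-quadratic, but because $\delta_1\neq 0$ (transvections in $\mathrm{SL}_2(\mathbb{F}_p)$ give a positive density of width-one primes), and this is what puts you in case \eqref{thm:main-numberfld-C-1/2} rather than \eqref{thm:main-numberfld-C} of Theorem \ref{thm:fanB}.
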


Note that the right-hand side of \eqref{eq:intromain2} decays quadratic exponentially in $r$ as $r$ grows. We also note that $\delta(E/K) = 0$ if $K$ has a real embedding, see for example \cite[Corollary 7.10]{KMR13}. We estimate a few values of $PR(j)$ and $\sum_{j=r}^\infty PR(j)$ for various $r$ and $p$ in the Tables \ref{table:pr0} and \ref{table:sum_prj}, respectively.

\begin{table}[ht]
    \centering
    \begin{tabular}{c||c|c|c|c|c|c}
         $p$ & 2 & 3 & 5 & 7 & 11 & 13   \\ \hline 
         $PR(0)$ & 0.2097 & 0.3195 & 0.3967  & 0.4274 & 0.4542 & 0.4613 
    \end{tabular}
\caption{Estimates for $PR(0)$}
\label{table:pr0}
\vspace{-25pt}
\end{table}
\begin{table}[ht]
\centering
    \begin{tabular}{c||c|c|c|c|c}
        \backslashbox{$p$}{$r$} & 1 & 2 & 3 & 4 & 5   \\ \hline \hline
        2 &  0.7903 & 0.3709  & 0.0913 & 0.0117 & 0.0007 \\
        3 & 0.6805 & 0.2012 & 0.0215 & 0.0008 & $9.7\times10^{-6}$ \\
        5 & 0.6033 & 0.1075 & 0.0042 & $3.3 \times 10^{-5}$ & $5.3 \times 10^{-8}$ \\
        7 & 0.5727 &  0.0742 & 0.0015 & $4.3 \times 10^{-6}$ & $1.8 \times 10^{-9}$\\
    \end{tabular}  \\
    \caption{Estimates for $\sum_{j=r}^\infty PR(j)$.}
    \label{table:sum_prj}
    \vspace{-10pt}
\end{table}

As a second application of the main theorem,  we obtain an effective Mordell-Lang conjecture for twist families of some superelliptic curves over some number fields. Fix a prime $p \geq 5$ and let $K/\mathbb{Q}$ be a number field containing $\mathbb{Q}(\zeta_p)$. Let $C: y^p = F(x) := x^3 + a_2x^2 + a_1x + a_0$ be a superelliptic curve over $K$ such that the splitting field of $F(x)$ is a Galois $S_3$-extension over $K$. We denote by $\mathcal{S}_p(C,X)$ the set of isomorphism classes of twist families $C_d: dy^p = F(x)$ of $C$ over $K$ such that $\mathrm{Gal}(K(\sqrt[p]{d})/K) \cong \mathbb{Z}/p\mathbb{Z}$, and $\prod_{\mathfrak{p} \mid (d) \subset \mathcal{O}_K} \mathrm{Nm}_\mathbb{Q}^K \mathfrak{p} < X$.
\begin{theorem}\label{thm:intro_superelliptic}
    Assume the Extended Riemann Hypothesis. Then there exists an explicit constant $B(p,K) > 0$, depending only on $p$ and $K$, such that
\begin{equation}
    \limsup_{X \to \infty}  \frac{\sum_{C_d \in \mathcal{S}_p(C,X)}\# C_d(K)}{\# \mathcal{S}_p(C,X)} \leq B(p,K).
\end{equation}
\end{theorem}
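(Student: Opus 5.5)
The strategy is to bound $\#C_d(K)$ in terms of the rank of the Jacobian $J_d$ of $C_d$, and then control that rank on average with the $(1-\zeta_p)$-Selmer rank distribution from Theorem \ref{thm:fanB}.

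First, the Galois module. The Jacobian $J$ of $C: y^p = F(x)$ has dimension $g=p-1$, since $C$ is a smooth plane curve of type $(3,p)$ with genus $(3-1)(p-1)/2 = p-1$. It carries an action of $\mathbb{Z}[\zeta_p]$ through the automorphism $y \mapsto \zeta_p y$, which is defined over $K$ because $\zeta_p\in K$. Put $\lambda = 1-\zeta_p$. The kernel $J[\lambda]$ is generated by the differences of the three points $(e_i,0)$, where $e_i$ are the roots of $F$. Hence $J[\lambda]\cong \{(a_i)\in\mathbb{F}_p^3 : \sum a_i=0\}$, which is $2$-dimensional over $\mathbb{F}_p$ and has Galois action through $S_3$ acting by the standard representation. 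The module is even-dimensional, and the $S_3$ hypothesis gives irreducibility (and absolute irreducibility for $p\ge5$), so I expect it to satisfy the technical conditions of Theorem \ref{thm:fanB}. I would also need the self-duality or pairing condition, via the Weil pairing restricted to $J[\lambda]$ with the Rosati involution sending $\lambda$ to $\bar\lambda$, which is an associate of $\lambda$. The twist $C_d$ has Jacobian $J_d$, the twist of $J$ by the character $\chi_d$ of $K(\sqrt[p]{d})/K$ acting through $\mu_p\subset\mathrm{Aut}(J)$. Its $\lambda$-Selmer group is exactly $\Sel_\lambda$ of the corresponding twist in the family. Applying Theorem \ref{thm:fanB} with the ordering by products of ramified primes, I get that the proportion of $d$ with $\dim_{\mathbb{F}_p}\Sel_\lambda(J_d/K)=j$ tends to at most a constant multiple of $PR(j)$, as in the bounds of Theorem \ref{thm:intro}. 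Since $J_d(K)/\lambda J_d(K)$ injects into $\Sel_\lambda$ and $J_d(K)\otimes\mathbb{Z}_p$ is a $\mathbb{Z}_p[\zeta_p]$-module of rank $\mathrm{rk}J_d(K)/(p-1)$, this gives $\mathrm{rk}\,J_d(K)\le (p-1)\dim\Sel_\lambda(J_d/K)$.

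Second, the rank-to-points step. I would use a uniform bound for rational points in terms of rank. The preferred tool is the Chabauty–Coleman/Stoll type bound, which when $r< g$ gives $\#C_d(K)\le c(p,K)$ for a suitable primes-of-good-reduction argument. The worry is that such a bound needs a good prime uniform in $d$. As a cleaner alternative I would use the uniform Mordell–Lang bound of Dimitrov–Gao–Habegger/Kühne: $\#C_d(K)\le c(g,[K:\mathbb{Q}])^{1+\mathrm{rk}J_d(K)}$, where $c$ depends only on $g=p-1$ and $[K:\mathbb{Q}]$, because $C_d$ has genus $p-1\ge 2$. This gives
\[
\#C_d(K)\le c^{1+(p-1)\dim\Sel_\lambda(J_d/K)}.
\]

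Third, averaging. I would sum $c^{1+(p-1)j}$ against the limiting proportions. The tail $PR(j)\approx p^{-j(j+1)/2}$ decays superexponentially, so $\sum_j c^{1+(p-1)j}PR(j)$ converges, and I would take $B(p,K)$ to be this sum times the constant from Theorem \ref{thm:fanB}. The \textbf{main obstacle} is interchanging $\limsup$ with the infinite sum. Theorem \ref{thm:fanB} gives only limiting densities for each fixed $j$, so I need a uniform tail estimate: the proportion of $d$ with $\dim\Sel_\lambda\ge j$ should be at most $\varepsilon_j$ with $\sum_j c^{(p-1)j}\varepsilon_j<\infty$ uniformly in $X$. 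I would try to extract this from the Markov-chain mechanism in the proof of Theorem \ref{thm:fanB}. There each added prime changes the Selmer rank by at most $1$, with transition probabilities to higher ranks of size about $p^{-j}$, so one can dominate the distribution at every finite stage by the stationary one. Failing that, I would fall back on an averaged moment bound, namely a bounded average of $p^{k\dim\Sel}$ for each $k$, to control the tail.
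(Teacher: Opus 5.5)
Your route is the paper's route. It consists of the identification $\mathrm{Sel}_{1-\zeta_p}(\mathrm{Jac}(C_d)/K)=\mathrm{Sel}(\mathrm{Jac}(C)[1-\zeta_p],\chi^d:\alpha)$, then Theorem \ref{thm:fanB}, then the bound $\mathrm{rk}\le(p-1)\dim_{\mathbb{F}_p}\mathrm{Sel}_{1-\zeta_p}$, then the Dimitrov--Gao--Habegger bound, and finally summation against the limiting law. On the pieces: the paper notes $\delta_1=0$, so the limiting law is $(1-\delta_C)E^++\delta_C E^-$, which is indeed $\le 2PR(j)$ pointwise. The paper cites Schaefer for the rank bound; your $\mathbb{Z}_p[\zeta_p]$-module argument proves the same fact.

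The step you flag as the main obstacle is a genuine gap, and the paper gives you nothing to fill it with. It simply asserts $\limsup_X(\cdots)\le\min\bigl[\sum_k E^+(k)c(p,K)^{1+(p-1)k},\sum_k E^-(k)c(p,K)^{1+(p-1)k}\bigr]$. That is, it interchanges $\limsup_X$ with an infinite sum of an unbounded function, knowing only that each density converges. (The $\min$ is also not what a convex combination of $E^\pm$ yields; your constant multiple of $PR(j)$ is the safe bound.) Fatou's lemma goes the wrong way here, so you need uniform integrability. For instance, it suffices to have $\sup_X\frac{1}{\#\mathcal{S}_p(C,X)}\sum_{C_d}p^{k\dim_{\mathbb{F}_p}\mathrm{Sel}_{1-\zeta_p}(\mathrm{Jac}(C_d)/K)}<\infty$ for some $k$ with $p^k>c(p,K)^{p-1}$, i.e.\ a uniform high moment of the Selmer group size.

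Neither of your remedies delivers this as stated. Literal domination by the stationary law is false. The chain is started from $\mathbf{S}(E_1)$, and the primes of $\mathfrak{S}$ (norm $\le(\log X)^A$, not Chebotarev-controlled) can push this up to rank $2\omega(\mathfrak{S})+C_T$. Also, since $\delta_1=0$, each prime of $\mathcal{P}_2$ moves the rank by $0$ or $\pm2$, not by at most $1$. Inside the fan this is repairable with a Foster--Lyapunov drift instead of domination. For $V(r)=a^r$ one has $(\delta_0\mathbf{Id}+\delta_2\mathbf{M}_L^2)V\le\lambda V+b$ with $\lambda<1$. Then $\lambda^{c_1\log\log X}$ beats the initial moment $a^{2\omega(\mathfrak{S})+C_T}=(\log X)^{o(1)}$. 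The ERH error terms, of size $\exp(-(\log X)^{1-c_2\log 2+o(1)})$, remain negligible after multiplication by $a^{O(\log\log X)}$.

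The real obstruction is outside the fan. Theorem \ref{thm:fanB} only speaks about the fan families $\mathcal{B}_K(X)$ and $\mathcal{C}_K(X)$, and Section \ref{sec:superelliptic} accordingly defines $\mathcal{S}_p(C,X)$ through $\mathcal{C}_K(X)$. Now take the introduction's family of all $p$-th power free $d$ with $\prod_{\mathfrak{p}\mid(d)}\mathrm{Nm}\,\mathfrak{p}<X$. Each radical carries $\asymp(p-1)^{\omega}$ twists, so a typical $d$ has $\omega\approx(p-1)\log\log X$. That is beyond the range $\omega<(1/\log 2+o(1))\log\log X$ that any fan of this doubly-exponential shape can reach, so Theorem \ref{thm:fanB} does not even give the limiting densities there.

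On such $d$ your only a priori input is $\dim_{\mathbb{F}_p}\mathrm{Sel}_{1-\zeta_p}\le 2\omega(d)+C_T$. Since $\sum_{\mathfrak{d}\in I_K(X)}z^{\omega(\mathfrak{d})}\asymp_z X(\log X)^{z-1}$, inserting $c(p,K)^{(p-1)(2\omega+C_T)}$ produces, relative to $\#\mathcal{S}_p(C,X)$, a growing power of $\log X$. So even a density-zero exceptional set cannot be discarded. Your fallback, a uniform bound on the average of $p^{k\dim\mathrm{Sel}}$ over the whole family, is therefore exactly the missing theorem: a Heath-Brown/Kane-type moment bound for $(1-\zeta_p)$-Selmer groups in $\mu_p$-twist families. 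Neither your proposal nor the paper supplies it. Without it, the argument yields only tail-probability statements over the fan family, not a bound on the average number of points.
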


A slight modification of the proof of the main theorem allows us to obtain lower bounds on the probability that the Jacobian of a hyperelliptic curve satisfying some mild conditions gains Mordell-Weil rank upon base change over quadratic extensions. We note that the theorem below is consistent with the groundbreaking result by Alex Smith \cite[Theorem 2.14, Example 3.7]{Smith2} for quadratic twist families of some odd hyperelliptic curves, and generalizes his results to quadratic twist families of some other hyperelliptic curves which are not odd.

\begin{theorem}\label{thm:intro_hyperelliptic}
Assume the Extended Riemann Hypothesis. Let $C: y^2 = f(x)$ be a hyperelliptic curve over $K$ for which the splitting field of $f$ over $K$ is a $S_{\deg f}$-Galois extension over $K$. For any non-negative integer $j \geq 0$, we denote by $PR(j) := \prod_{k=0}^\infty \frac{1}{1 + 2^{-k}} \prod_{k=1}^j \frac{2}{2^k - 1}$.

Then there exists an explicitly computable constant $\delta(C/K) \in [-1/2, 1/2]$ depending only on $C$ and $K$ such that
\begin{equation}\label{eq:intromain3-2}
            \liminf_{X \to \infty} \frac{\#\{F \in \mathcal{F}_2(K;X) \mid \mathrm{rk}(\mathrm{Jac}(C)/F) - \mathrm{rk}(\mathrm{Jac}(C)/K) =0\}}{N_2(K, X)} \geq 2 \cdot \left(\frac{1}{2} + \delta(C/K) \right) \cdot PR(0).
        \end{equation}
        If $r > 0$, then
        \begin{align}\label{eq:intromain3-3}
        \begin{split}
            & \; \; \; \; \limsup_{X \to \infty} \frac{\#\{K \in \mathcal{F}_2(K;X) \mid \mathrm{rk}(\mathrm{Jac}(C)/F) - \mathrm{rk}(\mathrm{Jac}(C)/K) > (r-1) \}}{N_2(K, X)} \\
            &\leq \sum_{\substack{j=r, \\ j \equiv 0 \mod 2}}^\infty 2 \cdot \left(\frac{1}{2} + \delta(C/K) \right) \cdot PR(j) + \sum_{\substack{j=r, \\ j \equiv 1 \mod 2}}^\infty 2 \cdot \left(\frac{1}{2} - \delta(C/K) \right) \cdot PR(j).
        \end{split}
        \end{align}
\end{theorem}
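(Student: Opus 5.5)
The plan is to reduce everything to the $2$-Selmer groups of the quadratic twists $J^{\chi}$ of $J := \mathrm{Jac}(C)$ and then apply a version of Theorem \ref{thm:fanB} adapted to the even-dimensional module $J[2]$.

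\textbf{Step 1 (reduction to twists).} For a quadratic extension $F = K(\sqrt{d})$ with character $\chi$ we have $\mathrm{rk}(J/F) - \mathrm{rk}(J/K) = \mathrm{rk}(J^{\chi}/K)$. Moreover
\[
\mathrm{rk}(J^\chi/K) \le \dim_{\mathbb{F}_2}\mathrm{Sel}_2(J^\chi/K) - \dim_{\mathbb{F}_2} J^\chi(K)[2],
\]
and $J^\chi[2] = J[2]$ as Galois modules. So the rank gain is bounded by the $2$-Selmer rank of the twist, minus a fixed correction $t = \dim J(K)[2]$. Since $f$ has Galois group $S_{\deg f}$, $J[2]$ is the (quotient of the) permutation module on the roots of $f$, and $t$ is $0$ or $1$ depending on the parity of $\deg f$. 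This is the analogue of the role of $(p-1)\dim \mathrm{Sel}_{1-\sigma}(A_{L/K}/K)$ in Example \ref{ex:Twists}. I will work with $T = J[2]$, of dimension $2g$ and carrying the Weil pairing. The family of $T$-twists indexed by $\mathcal{F}_2(K;X)$ ordered by product of ramified primes is then exactly the setting of Theorem \ref{thm:fanB}.

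\textbf{Step 2 (verifying hypotheses).} Theorem \ref{thm:fanB} requires technical conditions, presumably of two kinds:
\begin{enumerate}
\item Big-image conditions: $T$ is irreducible, $H^1(K(T)/K, T) = 0$, and there exist Frobenius elements acting with prescribed fixed-space dimension, so that the Markov transition probabilities governing the parity-preserving moves $d \mapsto d\mathfrak{q}$ are those of the Poonen--Rains chain.
\item A metabolic structure on local conditions, so that the Selmer groups are intersections of maximal isotropic subspaces.
\end{enumerate}
The $S_{\deg f}$ hypothesis should give (1): the image of Galois on $J[2]$ contains elements fixing subspaces of every needed dimension (transpositions, products of disjoint transpositions), and the vanishing of $H^1$ follows from standard computations for $S_n$ acting on its permutation module. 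For (2), $J$ is principally polarized, so the Weil pairing makes the local Kummer images maximal isotropic under the Poonen--Rains quadratic form. The main point is the existence of a global quadratic refinement, which I take to be supplied by the earlier sections. I expect the verification of (1), and its interaction with the local Kummer images at primes of bad reduction, to be the main obstacle. I would first try to imitate the proof given for $E$ with $\mathrm{Gal}(K(E[p])/K) \supseteq \mathrm{SL}_2$, replacing $\mathrm{SL}_2$ arguments with Chebotarev over $K(J[2])$ under ERH for effectivity.

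\textbf{Step 3 (distribution and parity).} Theorem \ref{thm:fanB} then gives the proportion of twists with $\dim \mathrm{Sel}_2(J^\chi/K) - t = j$ as
\[
2\left(\tfrac12 \pm \delta(C/K)\right)PR(j),
\]
with the sign determined by the parity of $j$. Here $\delta(C/K)$ is the disparity of the root-number parity across the family, computed via the Klagsbrun--Mazur--Rubin local-sum formula from the local quadratic forms at places dividing $2\infty$ and bad primes.

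\textbf{Step 4 (conclusion).} Rank gain $0$ is implied by Selmer-rank index $j = 0$, which gives \eqref{eq:intromain3-2} via $\liminf$. Rank gain $> r-1$ forces $j \ge r$, and summing over $j \ge r$ with the parity weights gives \eqref{eq:intromain3-3} via $\limsup$.
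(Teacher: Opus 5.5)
Your Steps 1 and 4 match the paper's final deduction. Step 2, however, misplaces the difficulty, and the gap there is genuine: Theorem \ref{thm:fanB} cannot be ``adapted'' to $T=J[2]$ just by checking big-image hypotheses.

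\textbf{Higher-width primes.} The proof of Theorem \ref{thm:fanB} is tied to Condition \ref{condition:T} ($\dim T=2$). There every prime has width $0,1,2$, and twisting acts by $\mathbf{Id},\mathbf{M}_L,\mathbf{M}_L^2$. For $\dim J[2]=2g\ge 4$, primes with $n=\dim J[2]^{\mathrm{Frob}_\mathfrak{q}}\ge 2$ have positive density under the $S_{\deg f}$ hypothesis (split primes, transpositions). By Lemma \ref{Yu:lemma5.5}, the change in Selmer rank at such a prime depends on $\dim(\alpha_\mathfrak{q}(\chi_\mathfrak{q})\cap V_\mathfrak{q}(\omega))$. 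Chebotarev over $F_{\mathfrak{d},\omega}$ controls only $t(\omega,\mathfrak{q})$ (Theorem \ref{Yu:prop6.4}). For $n\ge 3$ the twisting map $\alpha_\mathfrak{q}$ is injective but not surjective onto $\mathcal{H}_{ram}(q_\mathfrak{q})$, so the position of the local twist is uncontrolled. At these primes there is no computable Markov transition, and in particular no polynomial in $\mathbf{M}_L$. This is exactly where \cite{Yu19} had to assume UDRL, and ERH alone does not remove it.

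The idea you are missing is the paper's reordering of twists.
\begin{itemize}
\item Using ERH and the modified fan $\overline{\mathcal{D}}_{m,k,X,2^{-a}}$ of Remark \ref{remark:parametrize_fan}, whose elements are approximated by tuples of primes in independent norm intervals, Chebotarev may be applied to the prime factors in any order.
\item The paper twists \emph{first} by the block $\mathfrak{W}$ of width-$\ge 2$ primes. About the resulting unknown operator $\mathbf{W}_{\text{width}\ge 2}$ it proves only a uniform bound on the exponential moment $\mathbb{E}[2^{\mathrm{rk}}]$ (Proposition \ref{prop:crucial1}).
\item Only then does it twist by the $m_0+m_1\sim(\delta_0+\delta_1)m$ primes of width $0,1$. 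These are governed by $\mathbf{M}_C=\frac{\delta_0}{\delta_0+\delta_1}\mathbf{Id}+\frac{\delta_1}{\delta_0+\delta_1}\mathbf{M}_L$.
\item Geometric ergodicity (Theorem \ref{thm:geometric-ergodicity}, error $\ll\mathbb{E}[2^{\mu}]\,\gamma^{m_0+m_1}$) then washes out whatever distribution $\mathfrak{W}$ produced.
\end{itemize}
This also requires $\delta_1>0$, which you would have to extract from the $S_{\deg f}$ hypothesis.

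\textbf{The constant shift.} Your correction term $t$ is wrong. $J[2]$ is irreducible of dimension $\ge 2$; for $\deg f\ge 3$ of either parity, $S_{\deg f}$ has no nonzero invariants on it. Hence $J(K)[2]=0$ and $t=0$ always. The invariant that actually shifts the whole family is $\Sha^1(K,J[2])\in\{0,\mathbb{Z}/2\mathbb{Z}\}$ \cite{PS99}. Being locally trivial, it lies in every $\mathrm{Sel}_2(J^\chi/K)$. When it is nonzero, no twist has $2$-Selmer rank $0$, so the distribution claimed in your Step 3 is false.

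The paper instead works with $\overline{\mathrm{Sel}}=\mathrm{Sel}_2/\Sha^1(K,J[2])$ via \cite[Theorem 4.14]{PR12}. It proves the Poonen--Rains law with disparity for $\dim\overline{\mathrm{Sel}}$ (Theorem \ref{thm:hyperelliptic}) and subtracts $\dim\Sha^1$ when passing to rank gain. That last step needs the $\Sha^1$ class not to be the Kummer image of a point of $J^\chi(K)$; only $\mathrm{rk}(J^\chi/K)\le\dim\mathrm{Sel}_2(J^\chi/K)$ is automatic.

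Finally, the quadratic refinement is not supplied by earlier sections. For $\deg f$ even, obtaining a non-degenerate symmetric form on $H^1(K_v,J[2])$ requires the case analysis on the theta characteristic torsor class $c_{\mathcal{T}}$ from \cite{PR12-theta}, carried out in Section \ref{sec:hyperelliptic}.
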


\subsection*{Outline} In Section \ref{sec:Selmer} we define Selmer structures, explain their connections to Selmer groups, and elaborate their applications to Jacobians of superelliptic curves and Selmer rank distributions in certain twist families. In Section \ref{sec:Fan} we recall the setup of \cite{KMR13, KMR14} and define the fan structure over which their distributions are defined. In Section \ref{sec:analytic-intermission} we show that the newly constructed fan structure forms a density 1 subset among the set of square-free ideals of bounded norm. In Section \ref{sec:generalthm} we state and prove a distribution (Theorem \ref{thm:fanB}) analogous to those of Klagsbrun-Mazur-Rubin, but where the ordering on characters is by the product of ramified primes rather than by the fan structure. This allows us to prove Theorem \ref{thm:intro}. In Section \ref{sec:superelliptic} we prove Theorem \ref{thm:intro_superelliptic} for cyclic twist families of some superelliptic curves. In Section \ref{sec:hyperelliptic} we prove Theorem \ref{thm:intro_hyperelliptic} by using the setup of \cite{Yu19} for quadratic twist families of some hyperelliptic curves. We finish the manuscript with a summary of the overarching philosophy of the techniques employed in this manuscript in Section \ref{sec:summary}.

\subsection*{Acknowledgements}
We would like to thank Peter Koymans for suggesting to use the radical of an ideal, which is used in stating the main theorem for number fields. We would like to also thank Valentin Blomer and Peter Koymans for very helpful comments and pinpointing errors in the previous version of the manuscript. We would like to thank Jef Laga, Jungin Lee, Carlo Pagano, Alexander Smith, and Melanie Matchett-Wood for helpful discussions. The second author would like to thank the Max Planck Institute for Mathematics for its generous support, Mathematisches Forschungsinstitut Oberwolfach for organizing a workshop on arithmetic statistics for algebraic objects, the Lodha Mathematical Sciences Institute for organizing a thematic program on arithmetic statistics, and the Centre de Recherches Math\'ematiques for organizing a semester program on arithmetic statistics, from which the author had helpful discussions regarding this work with other participants.

\section{Selmer structures associated to twists of Galois modules}\label{sec:Selmer}

We recall the notion of Selmer structures associated to twist families of a fixed $\mathrm{Gal}(\overline{K}/K)$-module satisfying a number of mild conditions. We closely follow the notations used in \cite{KMR13, KMR14, Park25-1}. Throughout, we let $K$ denote a number field and $p$ denote a fixed prime number. 
\begin{condition} \label{condition:T}
Following the notations from \cite{KMR13, KMR14, Park25-1}, let $T$ be a Galois module over $K$ satisfying the following conditions:
\begin{itemize}
    \item $T$ is a $2$-dimensional vector space over $\mathbb{F}_p$,
    \item $T$ admits a continuous action of $\mathrm{Gal}(\overline{K}/K)$,
    \item $T$ admits a non-degenerate, $\mathrm{Gal}(\overline{K}/K)$-equivariant pairing $T \times T \to \mu_p$ which induces a symmetric pairing $H^1(K_v, T) \times H^1(K_v, T) \to \mathbb{F}_p$ for all places $v$ of $K$,
    \item The action of $\mathrm{Gal}(\overline{K}/K)$ on $T$ is irreducible,
    \item We have $\mathrm{Hom}_{\mathrm{Gal}(\overline{K}/K(\mu_p))}(T,T) = \mathbb{F}_p$, and
    \item We have $H^1(\mathrm{Gal}(K(T)/K), T) = 0$.
\end{itemize}
\end{condition}

\begin{example}
    Let $E$ be an elliptic curve over a number field $K$ and let $p$ be any prime. If $\mathrm{Gal}(K(E[p])/K)) \supset \mathrm{SL}_2(\mathbb{F}_p)$, then $T := E[p]$ endowed with the Weil pairing satisfies Condition \ref{condition:T}.
\end{example}

\begin{definition} Let $K$ be a number field and let $T$ be as above.  For each $i = 0, 1, 2$, we denote by $\delta_i$ the following asymptotic densities:
\begin{equation} \label{eq:delta_i}
\delta_i = \lim_{X \to \infty}\frac{\#\{\mathfrak{p} \in \mathcal{O}_K \mid \mathrm{Nm}_\mathbb{Q}^K \mathfrak{p} \leq X, \dim H^1(K_\mathfrak{p}, T) =2i \}}{\#\{\mathfrak{p} \in \mathcal{O}_K \mid \mathrm{Nm}_\mathbb{Q}^K \mathfrak{p} \leq X \}}.
\end{equation}
Note the limit defining each $\delta_i$  exists and can be obtained by applying the Chebotarev density theorem to the Galois extension $K(T)/K$; see the following definition.
\end{definition}

\begin{definition}[cf. Definition 3.3 and Definition 5.11 of \cite{KMR14}]
    We make following notations.
    \begin{itemize}
        \item $\Sigma_T$: a finite set of places of $K$ containing places where $T$ is ramified, places above $p$, all Archimedean places, and all places whose associated prime ideals generate $\mathrm{Cl}(K)/\ell \mathrm{Cl}(K)$.
        \item $\mathfrak{d}$: a square-free product of places of $K$.
        \item $\Sigma_T(\mathfrak{d})$: a set of places $\Sigma_T \cup \left\{ \mathfrak{q} : \mathfrak{q} \mid \mathfrak{d} \right\}$.
        \item $\mathcal{P}$: the set of primes of $K$.
        \item $\mathcal{P}_i$: the set of primes $\mathfrak{q}$ of $K$ such that $\mathfrak{q} \not\in \Sigma$ and $\dim_{\mathbb{F}_p} H^1(K_\mathfrak{q}, T) = 2i$. We note that $\mathfrak{q} \in \mathcal{P}_i$ is equivalent to the following Chebotarev condition with respect to the Galois extension $K(T)/K$:
        \begin{itemize}
            \item $\mathfrak{q} \in \mathcal{P}_0$ if and only if $\mathrm{Frob}_\mathfrak{q} \in \mathrm{Gal}(K(T)/K)$ has order  equal to neither $1$ nor $p$.
            \item $\mathfrak{q} \in \mathcal{P}_1$ if and only if $\mathrm{Frob}_\mathfrak{q} \in \mathrm{Gal}(K(T)/K)$ has order equal to $p$.
            \item $\mathfrak{q} \in \mathcal{P}_2$ if and only if $\mathrm{Frob}_\mathfrak{q} \in \mathrm{Gal}(K(T)/K)$ is trivial.
        \end{itemize}
        \item $\Omega_1$: the Cartesian product of local characters
        \begin{equation*}
            \Omega_1 := \prod_{v \in \Sigma_T} \mathrm{Hom}(\mathrm{Gal}(\overline{K}_v/K_v), \mu_p).
        \end{equation*}
        \item $\Omega_\mathfrak{d}$: the Cartesian product of local characters
        \begin{equation*}
            \Omega_\mathfrak{d} := \prod_{v \in \Sigma_T} \mathrm{Hom}(\mathrm{Gal}(\overline{K}_v/K_v), \mu_p) \times \prod_{\substack{q \mid \mathfrak{d} \\ q \not\in \Sigma_T}} \mathrm{Hom}_{ram}(\mathrm{Gal}(\overline{K}_\mathfrak{q}/K_\mathfrak{q}),\mu_p),
        \end{equation*}
        where $\mathrm{Hom}_{ram}(\mathrm{Gal}(\overline{K}/K),\mu_p)$ is the set of ramified local characters of $\Gal(\overline{K}/K)$ of order $p$.
    \end{itemize}
\end{definition}

\begin{definition}[cf. Definitions 5.1, 5.2, 5.3, and 5.4 of \cite{KMR14}, and Definitions 2.3 and 2.4 of \cite{Park25-1}]
    We recall the following notations.
    \begin{itemize}
        \item $q_v$: a Tate quadratic form on $H^1(K_v, T)$ induced from the pairing $\mu_T: T \times T \to \mu_p$ satisfying the third axiom of Condition \ref{condition:T}.
        \item $\mathcal{H}(q_v)$: the set of maximal isotropic subspaces of $H^1(K_v, T)$ with respect to the quadratic form $q_v$.
        \item $\mathcal{H}_{ram}(q_v)$: the set of maximal isotropic subspaces $\mathcal{H}$ of $H^1(K_v, T)$ such that $\mathcal{H} \cap H^1(K_v, T) = 0$ for any place $v \not\in \Sigma_T$. Note that $\# \mathcal{H}_{ram}(q_\mathfrak{q}) = p^{i-1}$ for all $\mathfrak{q} \in \mathcal{P}_1 \cup \mathcal{P}_2$.
    \end{itemize}
\end{definition}

\begin{definition}[cf. Definitions 5.10 and 5.12 of \cite{KMR14}, and Definition 2.6 of \cite{Park25-1}]
   Twisting data $\alpha$ is a function
    \begin{equation}
        \alpha: \prod_{v \in \Sigma_T} \frac{\mathrm{Hom}(\mathrm{Gal}(\overline{K}_v/K_v), \mu_p)}{\mathrm{Aut}(\mu_p)} \times \prod_{\mathfrak{q} \not\in \Sigma_T} \frac{\mathrm{Hom}_{ram}(\mathrm{Gal}(\overline{K}_\mathfrak{q}/K_\mathfrak{q}), \mu_p)}{\mathrm{Aut}(\mu_p)} \to \prod_{v \in \Sigma_T} \mathcal{H}(q_v) \times \prod_{\mathfrak{q} \not\in \Sigma_T} \mathcal{H}_{ram}(q_\mathfrak{q})
    \end{equation}
    defined as follows.
    \begin{itemize}
        \item At $v \in \Sigma_T$, we have a set-theoretic map
        \begin{equation*}
            \alpha: \frac{\mathrm{Hom}(\mathrm{Gal}(\overline{K}_v/K_v), \mu_p)}{\mathrm{Aut}(\mu_p)} \to \mathcal{H}(q_v).
        \end{equation*}
        \item At $\mathfrak{q} \in \mathcal{P}_0$, we have the trivial map
        \begin{equation*}
            \alpha: \frac{\mathrm{Hom}_{ram}(\mathrm{Gal}(\overline{K}_\mathfrak{q}/K_\mathfrak{q}), \mu_p)}{\mathrm{Aut}(\mu_p)} \to \left\{ 0 \right\}.
        \end{equation*}
        \item At $\mathfrak{q} \in \mathcal{P}_1$, we have the constant map
        \begin{equation*}
            \alpha: \frac{\mathrm{Hom}_{ram}(\mathrm{Gal}(\overline{K}_\mathfrak{q}/K_\mathfrak{q}), \mu_p)}{\mathrm{Aut}(\mu_p)} \to \mathcal{H}_{ram}(q_\mathfrak{q}).
        \end{equation*}
        \item At $\mathfrak{q} \in \mathcal{P}_2$, we have the bijection
        \begin{equation*}
            \alpha: \frac{\mathrm{Hom}_{ram}(\mathrm{Gal}(\overline{K}_\mathfrak{q}/K_\mathfrak{q}), \mu_p)}{\mathrm{Aut}(\mu_p)} \to \mathcal{H}_{ram}(q_\mathfrak{q}).
        \end{equation*}
    \end{itemize}
    Given a square-free product of places $\mathfrak{d}$, we denote by $\alpha_\mathfrak{d}$ the restriction of the domain of $\alpha$ over the set of places lying in $\Sigma_T(\mathfrak{d})$.
\end{definition}

With these definitions in place, we are ready to define Selmer groups and Selmer structures, which will be the main subject of our inquiry.  

\begin{definition}[cf. Section 5 and Section 6 of \cite{KMR14}]
    Given a Cartesian product of characters $\omega \in \Omega_\mathfrak{d}$, we define the Selmer structure as
    \begin{equation}
        \mathrm{Sel}(T, \omega : \alpha) := \mathrm{Ker}\left( H^1(K,T) \to \bigoplus_{v \in \Sigma_T(\mathfrak{d})} \frac{H^1(K_v,T)}{\alpha_\mathfrak{d}(\omega)} \oplus \bigoplus_{v \not\in \Sigma_T(\mathfrak{d})} \frac{H^1(K_v,T)}{H^1_{ur}(K_v,T)} \right).
    \end{equation}
    Similarly, given a global character $\chi \in \mathrm{Hom}(\mathrm{Gal}(\overline{K}/K), \mu_p)$, we define the Selmer structure
    \begin{equation}
        \mathrm{Sel}(T, \chi : \alpha) := \mathrm{Sel}(T, (\chi_v)_{v \in \Sigma_T(\mathfrak{d})} : \alpha),
    \end{equation}
    where $\mathfrak{d}$ is the squarefree product of ramified places of $\chi$, and $\chi_v$ is the restriction of $\chi$ at the place $v$ of $K$.
\end{definition}

We conclude this section with two examples that highlight how the discussion above will be relevant to each of the main theorems. The case for quadratic twist families of hyperelliptic curves will be specified in Section \ref{sec:hyperelliptic}.

\begin{example}[cf. Section 6 of \cite{KMR14}]\label{ex:Twists}
    Choose a prime number $p$. Let $E$ be an elliptic curve over a number field $K$ such that $\mathrm{Gal}(K(E[p])/K) \supset \mathrm{SL}_2(\mathbb{F}_p)$. Given a global character $\chi \in \mathrm{Hom}(\mathrm{Gal}(\overline{K}/K), \mu_p)$, let $L/K$ be the fixed field of $\mathrm{Ker}(\chi)$. Given a place $v$ of $K$, we denote by $\chi_v$ the restriction of $\chi$ to $\mathrm{Hom}(\mathrm{Gal}(\overline{K_v}/K), \mu_p)$. Denote by $\sigma$ the generator of the cyclic Galois group $\mathrm{Gal}(L/K)$. Denote by $\Sigma(\chi)$ the set of places of $K$ consisting of places of bad reduction of $E$ and ramified places of $L/K$.

    Define the following $p-1$ dimensional abelian variety over $K$ associated to $L$:
    \begin{equation}
        A^{\chi} := \mathrm{Ker} \left( \mathrm{Nm}_K^L: \mathrm{Res}_{K}^{L} E \to E \right)
    \end{equation}
      where $\mathrm{Res}$ denotes the Weil restriction from $L$ to $K$. Likewise, given a place $v$ of $K$, set
    \begin{equation*}
        A^{\chi_v} := \mathrm{Ker} \left( \mathrm{Nm}_{K_v}^{L_v}: \mathrm{Res}_{K_v}^{L_v} E \to E \right),
    \end{equation*}
    which is a $p-1$ dimensional abelian variety over $K_v$. We define the twisting data $\alpha$ as the map
    \begin{equation}
        \alpha  (\overline{\chi_v})_{v \in \Sigma(\chi)} = \prod_{v \in \Sigma(\chi)} \frac{A^{\chi_v}(K_v)}{(1-\sigma)A^{\chi_v}(K_v)}.
    \end{equation}
    Then one has
    \begin{equation} \label{eq:SameSelmer}
        \mathrm{Sel}_{1-\sigma}(A^{\chi}/K) = \mathrm{Sel}(E[p], \chi : \alpha),
    \end{equation}
    whose validity follows from the $\mathrm{Gal}(\overline{K}/K)$-equivariant isomorphism
    \begin{equation}
        A^\chi[1-\sigma] \cong E[p],
    \end{equation}
    see for example \cite[Proposition 4.1]{MR07} for more details.
\end{example}

\begin{example}[cf. Section 2 of \cite{Yu16}, Section 3 of \cite{Park25-1}, and Section 3 of \cite{PR12-theta}] \label{example:superelliptic}
    Choose a prime number $p \geq 5$. Let $C: y^p = x^3 + a_2x^2 + a_1x + a_0$ be a superelliptic curve over a number field $K$ such that $K \supset \mathbb{Q}(\zeta_p)$. Note $C$ has genus $p-1$, and the projective curve $C$ (obtained after normalization of projectivization of the affine model of $C$) has a unique point at infinity \cite[Section 3]{Sch98}.
    
    Given a $p$-th power free element $d \in K^\times / (K^\times)^p$, we denote by $C_d: dy^p = x^3 + a_2x^2 + a_1x + a_0$ the order-$p$ twist of $C$. Let $\chi^d \in \mathrm{Hom}(\mathrm{Gal}(\overline{K}/K), \mu_p)$ be a character such that the fixed field of $\mathrm{Ker}(\chi^d)$ is $K(\sqrt[p]{d})$. Given a place $v$ of $K$, denote by $\chi^d_v$ the restriction of $\chi^d$ to $\mathrm{Hom}(\Gal(\overline{K}_v/K_v),\mu_p)$. Denote by $\Sigma(d)$ the set of places of $K$ consisting of places of bad reduction of $\mathrm{Jac}(C)$ and places dividing $d$.

    Let $\zeta_p: C_d \to C_d$ be a morphism which is defined as $(x,y) \mapsto (x, \zeta_p y)$. Consider the induced morphism $\zeta_p: \mathrm{Jac}(C_d) \to \mathrm{Jac}(C_d)$. The torsion submodule $\mathrm{Jac}(C_d)[1-\zeta_p]$ is a 2-dimensional $\mathbb{F}_p$ vector space with a continuous $\mathrm{Gal}(\overline{K}/K)$-action. The principal polarization on $\mathrm{Jac}(C_d)$ induces a non-degenerate bilinear pairing
    \begin{equation}
        \mathrm{Jac}(C_d)[1-\zeta_p] \times \mathrm{Jac}(C_d)[1-\zeta_p] \to \mu_p.
    \end{equation}
    Because $1-\zeta_p$ is self-dual (\cite[Proposition 3.1]{Sch98}), we can use the form $q_v$ constructed in \cite[Lemma 3.4]{KMR13} or \cite[p.260, Case I]{PR12} to induce a non-degenerate quadratic form $q_v: H^1(K_v, \mathrm{Jac}(C_d)[1-\zeta_p]) \to \mathbb{F}_p$ for every place $v$ of $K$. (One can then construct a symmetric bilinear form from $q_v$ via $B(x,y) := q_v(x+y) - q_v(x)-q_v(y)$). Using this fact, we shall endow a global metabolic structure on $\mathrm{Jac}(C)[1-\zeta_p]$ (as in \cite[Definition 3.3]{KMR13}), from which all the consequential results stated in \cite{KMR13, KMR14} can be utilized. In particular, one does not need to show that the pairing on $\mathrm{Jac}(C_d)[1-\zeta_p]$ is alternating, which is one of the conditions assumed throughout \cite{KMR13, KMR14}. We will discuss more on which components of \cite{KMR13, KMR14} we will use in Section \ref{sec:Fan}.

    We have a $\mathrm{Gal}(\overline{K}/K)$-equivariant isomorphism
    \begin{equation}
        \mathrm{Jac}(C_d)[1-\zeta_p] \cong \mathrm{Jac}(C)[1-\zeta_p].
    \end{equation}
    This follows from combining the following two facts. One, that $\mathrm{Jac}(C)[1-\zeta_p]$ is spanned by divisors of form $((\alpha_i, 0) - (\alpha_j, 0))$ where $(\alpha_i, 0)$ is an affine Weierstrass point of $C$ (\cite[Proposition 3.2]{Sch98}): The other, that the $x$-coordinates of Weierstrass points of $C_d$ are multiples of those of Weierstrass points of $C$ by powers of $d$. We refer to \cite[Lemma 3.5]{Park25-1} for further details.
    
    Define the twisting data $\alpha$ as the map
    \begin{equation}
        \alpha((\chi^d_v)_{v \in \Sigma(d)}) = \prod_{v \in \Sigma(d)} \frac{(\mathrm{Jac}(C_d))(K_v)}{(1-\zeta_p)(\mathrm{Jac}(C_d))(K_v)}.
    \end{equation}
    Note that $\alpha$ satisfies all the conditions of being a twisting data as shown in \cite[Lemma 2.16]{Yu16}.
    Then one has
    \begin{equation}
        \mathrm{Sel}_{1-\zeta_p}(\mathrm{Jac}(C_d)/K) = \mathrm{Sel}(\mathrm{Jac}(C)[1-\zeta_p], \chi^d : \alpha).
    \end{equation}
     Importantly, note $\mathrm{Jac}(C)[1-\zeta_p]$ satisfies Condition \ref{condition:T} if the splitting field of the cubic polynomial $x^3 + a_2 x^2 + a_1x + a_0$ is an $S_3$ Galois extension of $K$. See \cite[Lemma 3.6]{Park25-1}, the proof of which applies when $K$ is a global field, for all of the details. We give only a brief summary here. The action of elements of $\mathrm{Gal}(K(\mathrm{Jac}(C)[1-\zeta_p])/K) \cong S_3$ on the 2-dimensional $\mathbb{F}_p$ vector space can be represented by the following six elements of $\mathrm{GL}_2(\mathbb{F}_p)$:
     \begin{equation*}
         \left\{ \begin{pmatrix} 1 & 0 \\ 0 & 1 \end{pmatrix}, \begin{pmatrix} -1 & -1 \\ 0 & 1 \end{pmatrix}, \begin{pmatrix} 1 & 0 \\ -1 & -1 \end{pmatrix}, \begin{pmatrix} 0 & 1 \\ 1 & 0 \end{pmatrix}, \begin{pmatrix} -1 & -1 \\ 1 & 0 \end{pmatrix}, \begin{pmatrix} 0 & 1 \\ -1 & -1 \end{pmatrix} \right\}.
     \end{equation*}
     The condition that $\mathrm{Gal}(\overline{K}/K)$ acts irreducibly on $\mathrm{Jac}(C)[1-\zeta_p]$ follows from the fact that there is no proper subspace of $\mathbb{F}_p^{\oplus 2}$ which is invariant under multiplication by the six matrices. The condition that $\mathrm{Hom}_{\mathrm{Gal}(\overline{K}/K)}(\mathrm{Jac}(C)[1-\zeta_p], Jac(C)[1-\zeta_p]) = \mathbb{F}_p$ follows from the condition that only matrices of form $\begin{pmatrix} a & 0 \\ 0 & a \end{pmatrix}$ for any $a \in \mathbb{F}_p$ commute with all six matrices. Lastly, the last condition follows from the fact that the order of the group $S_3$ does not divide $p \geq 5$.
\end{example}

\section{Fan Structures}\label{sec:Fan}

The goal of this section is to construct a suitable subset of square-free elements of Galois extensions $K / \mathbb{Q}$, over which the techniques of \cite{KMR14} can be utilized to understand the distribution of Selmer structures of twist families of a fixed $\mathrm{Gal}(\overline{K}/K)$-module satisfying Condition \ref{condition:T}.

\subsection{Previous results}
\label{sec:KMRfan}

We start with a review of certain subsets of global $p$-cyclic characters of $K$, called ``fan structures,'' as first  constructed in \cite{KMR14}. Before we proceed, note that some of the notations used in this section (for instance the functions $L_i(X)$, the set $\mathcal{D}_{m,k,X}$, and the fan $\mathcal{B}_{m,k}(X)$) will be replaced by respective new constructions outlined in Section \ref{sec:newfan}.


For each positive integer $i \geq 1$, Klagsbrun, Mazur, and Rubin define a sequence of real valued functions $L_i(X)$ which satisfies
\begin{equation*}
L_{n+1}(X) := \max\left\{L_1\left(\prod_{i \leq n} L_i(X)\right), XL_n(X) \right\}.
\end{equation*}
Fix any number field $K$, a rational prime $p$, and a Galois module $T$ which is an $\mathbb{F}_p$ vector space satisfying Condition \ref{condition:T}. For any place $\mathfrak{p}$ of $K$, define the \textit{width} of $\mathfrak{p}$ by 
$$w(\mathfrak{p}) = 
\begin{cases}
    \dim_{\mathbb{F}_p} T(K_{\mathfrak{p}}) \quad &\text{if } \mu_p \in K_\mathfrak{p}^\times, \\
    0 \quad &\text{otherwise}.
\end{cases}$$
Given a square-free ideal $\mathfrak{d} \subset \mathcal{O}_K$, we define its width by $w(\mathfrak{d}):=\sum_{\mathfrak{p} \mid \mathfrak{d}} w(\mathfrak{p})$.

Next,  define the following set of square-free products of primes of $K$: 
\begin{equation}\label{eq:Dmkx}
    \mathcal{D}_{m,k,X} := \{\mathfrak{d} = \mathfrak{p}_1\mathfrak{p}_2...\mathfrak{p}_m \mid w(\mathfrak{d}) = k, \mathrm{Nm}_\mathbb{Q}^K\mathfrak{p}_i \leq L_i(X) \text{ for all } 1 \leq i \leq m \}.
\end{equation}

Let $C_p(K) = \mathrm{Hom}(G_K, \mu_p)$ be the group of global $p$-cyclic characters of $K$. For any $\mathfrak{d} \in \mathcal{D}_{m,k,X}$, define
\begin{equation}\label{eq:C(d)}
C_p(\mathfrak{d}) := \{ \chi \in C_p(K) \mid \chi \text{ ramified at all }\mathfrak{q} \mid \mathfrak{d} \text{ and unramified outside }\Sigma(\mathfrak{d}) \cup \mathcal{P}_0\},
\end{equation}
and 
\begin{equation}\label{eq:C(d,X)}
    C_p(\mathfrak{d},X) := \left\{ \chi \in C_p(\mathfrak{d}) \; | \; \mathrm{Nm}_\mathbb{Q}^K(\mathfrak{p}) < X \text{ if } \mathfrak{p} \in \Sigma(\mathfrak{d}) \cup \mathcal{P}_0 \right\}.
\end{equation}

Finally, we let 
$$\mathcal{B}_{m,k}(X) = \bigsqcup_{\mathfrak{d} \in \mathcal{D}_{m,k,X}} C_p(\mathfrak{d}, \mathcal{L}(L_{m+1}(X))) \subset C_p(K),$$
where $\mathcal{L}: [1,\infty) \to [1,\infty)$ is some non-decreasing function quantifying the error terms of a choice of an effective version of Chebotarev density theorem \cite[Theorem 8.1]{KMR14}. This set of characters $\mathcal{B}_{m,k}(X)$ is called a \textit{fan structure} on $C(K)$.

In \cite{KMR14}, the distribution of dimensions of Selmer structures of twist families of a fixed $\mathrm{Gal}(\overline{K}/K)$-module $T$ satisfying Condition \ref{condition:T} can be obtained over the set $\mathcal{B}_{m,k}(X)$. In order to state their results, we must first introduce the Markov operators necessary to understand the statistics of the Selmer structures.

\begin{definition}[cf. Definition 6.1 of \cite{park2022prime}]
Let $\mathbf{M}_L = [p_{r,s}]$ be the Markov operator over the state space of non-negative integers $\mathbb{Z}_{\geq 0}$ given by
\begin{equation*}
    p_{r,s} = \begin{cases}
    1 - p^{-r} &\text{ if } s = r-1 \geq 0, \\
    p^{-r} &\text{ if } s = r+1, \\
    0 &\text{ else}.
    \end{cases}
\end{equation*}
\end{definition}

\begin{definition}[cf. Definition 6.3 of \cite{park2022prime}]
Let $\mu: \mathbb{Z}_{\geq 0} \to [0,1]$ be a probability distribution over the state space of non-negative integers $\mathbb{Z}_{\geq 0}$. The parity of $\mu$ is the sum of probabilities at odd state spaces, i.e.
\begin{equation*}
    \rho(\mu) := \sum_{n \text{ odd}} \mu(n).
\end{equation*}
\end{definition}

\begin{definition}[cf. Proposition 2.4 of \cite{KMR14}] \label{def:Markov}
    Let $E^+, E^-: \mathbb{Z}_{\geq 0} \to [0,1]$ be probability distributions such that
\begin{equation*}
    E^+(n) = \begin{cases}
    \prod_{j=1}^\infty (1 + p^{-j})^{-1} \prod_{j=1}^n \frac{p}{p^j - 1} &\text{ if n even}, \\
    0 &\text{ if n odd}.
    \end{cases}
\end{equation*}
\begin{equation*}
    E^-(n) = \begin{cases}
    0 &\text{ if n even}, \\
    \prod_{j=1}^\infty (1 + p^{-j})^{-1} \prod_{j=1}^n \frac{p}{p^j - 1} &\text{ if n odd}.
    \end{cases}
\end{equation*}
Let $\mu: \mathbb{Z}_{\geq 0} \to [0,1]$ be a probability distribution. Then 
\begin{align*}
    \lim_{k \to \infty} \mathbf{M}_L^{2k}(\mu) &= (1 - \rho(\mu)) E^+ + \rho(\mu) E^-, \\
    \lim_{k \to \infty} \mathbf{M}_L^{2k+1}(\mu) &= \rho(\mu)E^+ + (1-\rho(\mu)) E^-.
\end{align*}
\end{definition}

The main result of \cite{KMR14} can be summarized as follows. We note that the disparity of Selmer groups $\Sel(T, \chi : \alpha)$ for $p=2$ originates from the fact that the natural restriction map $C(\mathfrak{d},X) \to \Omega_\mathfrak{d}$ is not surjective, see in particular \cite[Proposition 10.7]{KMR14}.

\begin{theorem}[Main Result of \cite{KMR14}] \label{KMR14:main}
    Let $K$ be any number field. Let $T$ be a Galois module satisfying Condition \ref{condition:T}.
    \begin{enumerate}
        \item Suppose $p = 2$. Let $\Delta$ be an element which generates $\mathrm{Hom}(\mathrm{Gal}(K(T)/K), \mu_2) \cong \mathbb{Z}/2\mathbb{Z}$. Given a fixed finite set of places $\Sigma$ containing $\Sigma_T$, denote by $\mathrm{E}^+_1(n)$ and $\mathrm{E}^-_1(n)$ the probability distributions
        \begin{align}
            \begin{split}
                \mathrm{E}^+_1(n) &:= \frac{\#\{\chi \in \mathcal{C}_2(1,X) : \dim_{\mathbb{F}_2} \mathrm{Sel}(T, \chi : \alpha) = n, \prod_{v \in \Sigma}\chi_v(\Delta) = 1\}}{\# \{\chi \in \mathcal{C}_2(1,X) : \prod_{v \in \chi_v} \chi_v(\Delta) = 1\}}, \\
                \mathrm{E}^-_1(n) &:= \frac{\#\{\chi \in \mathcal{C}_2(1,X) : \dim_{\mathbb{F}_2} \mathrm{Sel}(T, \chi : \alpha) = n, \prod_{v \in \Sigma}\chi_v(\Delta) = -1\}}{\# \{\chi \in \mathcal{C}_2(1,X) : \prod_{v \in \chi_v} \chi_v(\Delta) = -1\}}.
            \end{split}
        \end{align}
        If $\bigcup_X \mathcal{D}_{m,k,X}$ is non-empty, then
        \begin{equation*}
         \lim_{X\to \infty} \frac{\# \left\{\chi \in \mathcal{B}_{m,k}(X) : \dim_{\mathbb{F}_2} \mathrm{Sel}(T, \chi : \alpha) = n \right\}}{\# \mathcal{B}_{m,k}(X)} = \begin{cases}
             \mathbf{M}_L^k(\mathrm{E}^+_1)(n) &\text{ if k even}, \\
             \mathbf{M}_L^k(\mathrm{E}^-_1)(n) &\text{ if k odd}.
         \end{cases}
        \end{equation*}
        \item Suppose $p \neq 2$. Denote by $\mathrm{E}_1(n)$ the probability distribution
    \begin{equation}
        \mathrm{E}_1(n) := \frac{\#\{\chi \in \mathcal{C}_p(1,X) : \dim_{\mathbb{F}_p} \mathrm{Sel}(T, \chi : \alpha) = n\}}{\# \mathcal{C}_p(1,X)}.
    \end{equation}
    If $\bigcup_X \mathcal{D}_{m,k,X}$ is non-empty, then
    \begin{equation*}
     \lim_{X\to \infty} \frac{\# \left\{\chi \in \mathcal{B}_{m,k}(X) : \dim_{\mathbb{F}_p} \mathrm{Sel}(T, \chi : \alpha) = n \right\}}{\# \mathcal{B}_{m,k}(X)} = \mathbf{M}_L^k(\mathrm{E}_1)(n).
    \end{equation*}
    \end{enumerate}
\end{theorem}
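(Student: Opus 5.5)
The statement is a summary of \cite{KMR14}, so the plan is to reduce it to their results rather than reprove them. The first step is to check that, for $T$ satisfying Condition \ref{condition:T} and twisting data $\alpha$ as above, $T$ carries a global metabolic structure in the sense of \cite[Definition 3.3]{KMR13}. The local Tate quadratic forms $q_v$ sum to zero on global classes, and for each $\mathfrak{q} \notin \Sigma_T$ the unramified subspace $H^1_{ur}(K_\mathfrak{q},T)$ is maximal isotropic. The image $\alpha(\omega)$ at a ramified prime is also maximal isotropic and meets the unramified subspace trivially. With this in place, the Selmer structures $\mathrm{Sel}(T,\chi:\alpha)$ fit the framework of \cite[Sections 5--6]{KMR14}.

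The core of the argument proceeds by induction on the number $m$ of primes in $\mathfrak{d}$, following the fan. Let $\mathfrak{d}\in\mathcal{D}_{m,k,X}$ and let $\mathfrak{q}$ be a new prime with $\mathrm{Nm}\,\mathfrak{q} \le L_{m+1}(X)$. One compares $\mathrm{Sel}(T,\chi:\alpha)$ with $\mathrm{Sel}(T,\chi\psi:\alpha)$, where $\psi$ is ramified at $\mathfrak{q}$. There are three cases:
\begin{itemize}
\item If $\mathfrak{q}\in\mathcal{P}_0$, the local condition is $0$ and nothing changes, so the width is unchanged.
\item If $\mathfrak{q}\in\mathcal{P}_1$, then $H^1(K_\mathfrak{q},T)$ is $2$-dimensional. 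Replacing the unramified line by the transverse ramified line changes the Selmer rank by $\pm1$. It goes down exactly when the localization map $\mathrm{Sel}\to H^1_{ur}(K_\mathfrak{q},T)$ is nonzero.
\item If $\mathfrak{q}\in\mathcal{P}_2$, the analogous analysis in the $4$-dimensional space gives a two-step move. This is the step where $\mathbf{M}_L$ applied twice appears, consistent with width $2$.
\end{itemize}
By Chebotarev in the compositum $K(T, \mathrm{Sel})$, the Frobenius of $\mathfrak{q}$ acting on the $r$-dimensional Selmer group is equidistributed. The proportion of $\mathfrak{q}$ for which the localization vanishes is $p^{-r}$, which gives exactly the transition probabilities $p_{r,r+1}=p^{-r}$ and $p_{r,r-1}=1-p^{-r}$. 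Condition \ref{condition:T}, in particular $H^1(\mathrm{Gal}(K(T)/K),T)=0$ and $\mathrm{End}=\mathbb{F}_p$, is what guarantees the linear disjointness needed for this. Iterating $k$ width-steps therefore yields $\mathbf{M}_L^k$ applied to the base distribution on $\mathcal{C}_p(1,X)$.

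For the base distribution with $p$ odd, the restriction map from $C_p(1,X)$ to $\Omega_1$ equidistributes, so the base is $\mathrm{E}_1$. For $p=2$ the map is not surjective, by \cite[Proposition 10.7]{KMR14}. The quantity $\prod_v\chi_v(\Delta)$ controls the parity of the Selmer rank through the root-number/parity relation, so the base distribution splits into $\mathrm{E}^\pm_1$. Since each width-step flips parity, the parity of $k$ selects which base applies.

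The main obstacle is making the Chebotarev step uniform. The new prime must range up to $L_{m+1}(X)$, with error terms controlled in terms of the earlier primes and of the conductor bound $\mathcal{L}(L_{m+1}(X))$. This is why the rapidly growing functions $L_i$ are introduced. I would invoke the effective Chebotarev theorem \cite[Theorem 8.1]{KMR14} directly at this point, since ERH is assumed, rather than reprove it. Averaging the conditional transition over the fan then lets $X\to\infty$ with the error terms vanishing.
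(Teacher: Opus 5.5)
Deferring to \cite{KMR14} is exactly what the paper does, since it quotes this result without proof. For odd $p$ your outline also matches their structure: one prime at a time, with Chebotarev in $F_\omega/K$ producing $\mathbf{M}_L$ at $\mathcal{P}_1$ and $\mathbf{M}_L^2$ at $\mathcal{P}_2$.

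\textbf{The gap at $p=2$.} Your explanation of why even $k$ pairs with $\mathrm{E}^+_1$ and odd $k$ with $\mathrm{E}^-_1$ is wrong, and the actual mechanism is missing.

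\emph{Why your reason fails.} The sign $\prod_{v\in\Sigma}\chi_v(\Delta)$ does not control the parity of $\dim\mathrm{Sel}(T,\chi:\alpha)$; that parity is a sum of local terms at $\Sigma$. For $T=E[2]$ and $K$ with a real place one has $\delta=0$, i.e.\ $\rho(\mathrm{E}^+_1)=\rho(\mathrm{E}^-_1)$, so the sign carries no parity information there. The fact that $\mathbf{M}_L$ flips parity only governs the limit of $\mathbf{M}_L^k$. It plays no role in deciding which initial distribution is used.

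\emph{What actually selects the half is reciprocity.}
\begin{enumerate}
\item $K(\sqrt{\Delta})\subseteq K(T)$ is unramified outside $\Sigma$, and $\prod_{\text{all }v}\chi_v(\Delta)=1$. Hence every $\chi\in C(\mathfrak{d})$ satisfies $\prod_{v\in\Sigma(\mathfrak{d})}\chi_v(\Delta)=1$, and ramification at $\mathcal{P}_0$ contributes nothing. This describes the image in \cite[Proposition 10.7]{KMR14}, which you cite but never identify.
\item For $\mathfrak{q}\mid\mathfrak{d}$, the ramified $\chi_\mathfrak{q}$ satisfies $\chi_\mathfrak{q}(\Delta)=1$ if and only if $\Delta$ is a square in $K_\mathfrak{q}$. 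That happens if and only if $\mathrm{Frob}_\mathfrak{q}$ is even in $\mathrm{Gal}(K(T)/K)\cong S_3$. So $\chi_\mathfrak{q}(\Delta)=-1$ exactly when $\mathfrak{q}\in\mathcal{P}_1$.
\item Therefore $\prod_{v\in\Sigma}\chi_v(\Delta)=(-1)^{\#\{\mathfrak{q}\mid\mathfrak{d}\,:\,\mathfrak{q}\in\mathcal{P}_1\}}=(-1)^k$ on all of $\mathcal{B}_{m,k}(X)$. Moreover $\chi|_\Sigma$ equidistributes on that half of $\Omega_1$.
\end{enumerate}
As a side consequence, every $\chi\in\mathcal{C}_2(1,X)$ has sign $+1$. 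So $\mathrm{E}^-_1$ must be read as a distribution over local data in $\Omega_1$, not over $\mathcal{C}_2(1,X)$.

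\textbf{The twisting step.} The same computation shows that your step ``compare $\chi$ with $\chi\psi$'' cannot be set up for $p=2$. A global $\psi$ ramified at $\mathfrak{q}\in\mathcal{P}_1$, trivial at every $v\in\Sigma(\mathfrak{d})$, and otherwise ramified only in $\mathcal{P}_0$ would violate reciprocity. This is why \cite{KMR14} run the Markov step on local data instead:
\begin{itemize}
\item they compare $\omega\in\Omega_{\mathfrak{d}}$ with $\omega'\in\Omega_{\mathfrak{dq}}$ agreeing away from $\mathfrak{q}$;
\item they average over the ramified choices of $\omega'_\mathfrak{q}$, which is where the extra factor $p^{-1}$ in the $\mathcal{P}_2$ transition $r\to r+2$ comes from;
\item only afterwards do they pass to global characters, using the image and fibre sizes of $C(\mathfrak{d},Y)\to\Omega_{\mathfrak{d}}$ for every $\mathfrak{d}$ in the fan, not just $\mathfrak{d}=1$.
\end{itemize}

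\textbf{A minor point.} The statement is unconditional. \cite[Theorem 8.1]{KMR14} is an unconditional effective Chebotarev bound, and the rapidly growing $L_i$ are there precisely so that ERH is not needed.
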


We collect all fan structures with all possible widths together: $\mathcal{B}_m(X) = \bigcup_k \mathcal{B}_{m,k}(X)$. It is over such collections of characters (as $X,m \to \infty$) that the distributions on $p$-Selmer groups of twists $E$ by $\chi$ are given in \cite{KMR14}. They determine the distribution 
\begin{equation}\label{eq:KMRthm} 
    \lim_{m \to \infty}\lim_{X \to \infty} \frac{\#\{\chi \in \mathcal{B}_m(X) \mid \dim_{\mathbb{F}_p}\mathrm{Sel}(T, \chi : \alpha)=n\}}{\# \mathcal{B}_m(X)}.
\end{equation}

\subsection{A new fan structure} \label{sec:newfan}

Previous construction of the fan structure $\mathcal{B}_{m,k}(X)$ utilizes a quantification of error terms for an effective form of the Chebotarev Density Theorem. Under the assumption of the Extended Riemann Hypothesis (ERH), we have the following effective form of the Chebotarev Density Theorem. For further details, we refer to \cite[Theorem 4]{Serre81}. 
\begin{theorem}[Conditional Effective Chebotarev] \label{thm:effective_chebotarev}
    Let $F/K$ be a Galois extension of number fields with $G = \mathrm{Gal}(F/K)$, and let $C$ be a union of conjugacy classes of $G$. Let $\pi_C(X)$ be the number of prime ideals of norm at most $X$ whose Frobenius element lies in $C$. Assuming ERH, there exists an absolute constant $c > 0$ such that
    \begin{equation}\label{eq:effectiveChebotarevGRH}
        \left| \pi_C(X) - \frac{|C|}{|G|}\mathrm{Li}(X) \right| \leq c \frac{|C|}{|G|}X^{1/2}\left( \log |\mathrm{Disc}(F)|  + [F:\mathbb{Q}] \log X
        \right).
    \end{equation}
\end{theorem}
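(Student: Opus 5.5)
This is the conditional effective Chebotarev theorem, due to Lagarias--Odlyzko and stated as in Serre's \cite[Theorem 4]{Serre81}. I would not reprove it from scratch. The plan is to run the standard explicit-formula argument, using ERH to place all nontrivial zeros of the relevant Artin $L$-functions on the critical line.

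First, reduce to Hecke $L$-functions. Using characteristic functions of conjugacy classes, write
\[
\pi_C(X)=\frac{|C|}{|G|}\sum_{\chi}\overline{\chi(g_C)}\,\pi_\chi(X)
\]
as a combination over irreducible characters $\chi$ of $G$ (first for a single class, then summing over the classes in $C$). Artin $L$-functions are not known to be entire, so I would use Deuring's reduction. Fix $g\in C$ and let $H=\langle g\rangle$, which is cyclic. Counting primes of $K$ with Frobenius conjugate to $g$ then becomes counting degree-one primes of $F^H$ whose Frobenius in the abelian extension $F/F^H$ equals $g$. Primes that are not of degree one contribute $O(X^{1/2}[F:\mathbb{Q}])$, which is acceptable. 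In the abelian setting, orthogonality of characters of $H$ expresses the count through Hecke $L$-functions $L(s,\psi)$ of $F^H$, and these are entire for $\psi\neq 1$.

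Second, apply the truncated explicit formula to each $L(s,\psi)$ via Perron's formula for $-L'/L$. This gives
\[
\psi_\psi(X)=\delta_{\psi=1}X-\sum_{|\gamma|\le T}\frac{X^\rho}{\rho}+E(X,T),
\]
with $E(X,T)$ controlled by $\log$ of the conductor, the degree, and $X\log^2 X/T$. Under ERH we have $|X^\rho|=X^{1/2}$. The zero count on height-one windows is $O(\log A_\psi+n\log(|t|+2))$, where $A_\psi$ is the conductor and $n$ the degree. Summing over $|\gamma|\le T$ therefore gives $X^{1/2}\log T\,(\log A_\psi+n\log T)$. Choosing $T=X^{1/2}$, or better optimizing $T$, gives a bound of the form $X^{1/2}\log X(\log A_\psi+n\log X)$.

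Third, sum over $\psi$ weighted by the Deuring coefficients, which carry a factor $|C|/|G|$. Use the conductor--discriminant formula to bound $\sum_\psi\log A_\psi$ by $\log|\mathrm{Disc}(F)|$ plus degree terms. Finally, pass from $\psi$ to $\theta$ to $\pi$ by partial summation, which removes one factor of $\log X$. This yields the stated bound with the weight $|C|/|G|$ in front.

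The main obstacle is bookkeeping: getting the factor exactly $|C|/|G|$ rather than $|C|^{1/2}$ or $1$, and keeping the dependence on $\log|\mathrm{Disc}(F)|$ and $[F:\mathbb{Q}]\log X$ linear. Lagarias--Odlyzko's original bound has $|C|$ worse; the clean form needs Serre's refinement, which sums Deuring's reduction over the classes in $C$. If I could not recover this myself, I would cite \cite[Theorem 4]{Serre81}, since the paper explicitly defers to it.
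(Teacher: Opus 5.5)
Your proposal is correct and agrees with the paper, which gives no argument of its own beyond citing \cite[Theorem 4]{Serre81}. Your outline is exactly the Lagarias--Odlyzko/Serre argument behind that citation: Deuring's reduction to $H=\langle g\rangle$, the GRH explicit formula for the Hecke $L$-functions of $F^H$, the conductor--discriminant identity $\sum_{\psi}\log A_\psi=\log|\mathrm{Disc}(F)|$, and summation over the classes in $C$.

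For the bookkeeping you flag, two remarks. First, bound the primes of $F^H$ of residue degree at least $2$ over $K$ by $[F^H:K]$ times the number of primes of $K$ of norm at most $X^{1/2}$. This gives $\ll[F^H:\mathbb{Q}]X^{1/2}$ rather than $[F:\mathbb{Q}]X^{1/2}$, so after the Deuring weight $|C||H|/|G|$ these primes cost only $O\bigl(\frac{|C|}{|G|}[F:\mathbb{Q}]X^{1/2}\bigr)$. The primes above ramified primes of $K$ are handled the same way and cost $O\bigl(\frac{|C|}{|G|}\log|\mathrm{Disc}(F)|\bigr)$. Second, the Lagarias--Odlyzko GRH bound already carries the factor $|C|/|G|$; what Serre's form removes is their extra unweighted $\log|\mathrm{Disc}(F)|$ term.
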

The key objective of this subsection is to replace the functions $L_i(X)$, the set $\mathcal{D}_{m,k,X}$, and the fan structure $\mathcal{B}_{m,k,X}$ with simpler new definitions. We will demonstrate in the upcoming sections that under ERH, the statistics of dimensions of Selmer structures remain identical and allow us to obtain the desired statistics over the collection of cyclic prime order extensions ordered by the norms of the radicals of their relative discriminants.

We order the set of ideals of $\mathcal{O}_K$ using the height function defined as below.
\begin{definition}
    Let $K$ be a number field. Given an ideal $\mathfrak{m} \subset \mathcal{O}_K$, we define the radical of $\mathfrak{m}$, denoted $\mathrm{Rad}(\mathfrak{m})$, to be the positive integer given by
    \begin{equation*}
        \mathrm{Rad}(\mathfrak{m}) := \prod_{\substack{\mathfrak{p} \mid \mathfrak{m} \\ \mathfrak{p} \subset \Oh_K \mathrm{ prime}}} \mathrm{Nm}_{\mathbb{Q}}^K \mathfrak{p}.
    \end{equation*}
\end{definition}
It is clear that if two ideals $\mathfrak{m}, \mathfrak{n} \subset \mathcal{O}_K$ are coprime, then $\mathrm{Rad}(\mathfrak{m} \mathfrak{n}) = \mathrm{Rad}(\mathfrak{m}) \mathrm{Rad}(\mathfrak{n})$.
We will use $\mathrm{Rad}(\mathfrak{m})$ to help organize a fan structure over any number field. 

\begin{remark}
There is a version of Malle's Conjecture for counting number fields when ordered by the product-of-ramified-primes (an example of a ``fair counting function''). The goal being to count abelian extensions $K/\mathbb{Q}$ with a fixed Galois group for which $\mathrm{Rad}(|D_K|) \leq X$. For further details see \cite{MakiDensity} for the case of abelian extensions. A correction to Malle's conjecture for fair counting functions can be found in \cite{KoymansPaganoFairCounting}. The densities we obtain can be compared to their results.
\end{remark}

\begin{definition}
    Let $K$ be a number field. Let $\mathcal{I}_K^{\square-free}$ be the subset of ideals $\mathfrak{m}$ of $\Oh_K$ such that $\mathfrak{m}$ is a square-free product of prime ideals in $\mathcal{O}_K$.
\end{definition}
If $K = \mathbb{Q}$, then $\mathcal{I}_K^{\square-free}$ is the set of square-free integers. 

We require a generalization of large deviation principles for Erd\"os-Kac theorem for number fields. That the Erd\"os-Kac theorem for number fields can be found in \cite[Theorem 1]{Li04}.  The desired large deviation principle can still be obtained from \cite{MZ16}, but at the cost of considering ideals lying in a proper subset of $\mathcal{O}_K$. Hence, we instead utilize a generalization of Turan's theorem for number fields as stated in \cite{Li04-2}.


\begin{theorem}[Example 2 and Corollary 1 of \cite{Li04-2}] \label{thm:Erdos-Kac}
Let $K$ be a number field. Given an ideal $\mathfrak{m} \subset \mathcal{O}_K$, we denote by $\omega(\mathfrak{m})$ the number of distinct prime ideals divisors of $\mathfrak{m}$. Then for every $\epsilon > 0$ and every $X > 0$,
\begin{equation}\label{eq:idealcount}
    \# \left\{ \mathfrak{m} \in \mathcal{O}_K : \mathrm{Nm}_{\mathbb{Q}}^K \mathfrak{m} \leq X, |\omega(\mathfrak{m}) - \log \log \mathrm{Nm}_{\mathbb{Q}}^K \mathfrak{m}| > \epsilon \log \log \mathrm{Nm}_{\mathbb{Q}}^K \mathfrak{m} \right\} = o(X).
\end{equation}
\end{theorem}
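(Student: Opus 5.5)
We would follow the classical Turán variance argument, transplanted to ideals of $\mathcal{O}_K$. The idea is to bound the second moment of $\omega(\mathfrak{m}) - \log\log X$ over ideals of norm at most $X$ by $O(X\log\log X)$, and then apply Chebyshev's inequality. Throughout, the only input from analytic number theory is unconditional and standard:
\begin{itemize}
\item the ideal-counting asymptotic $\#\{\mathfrak{m} : \mathrm{Nm}\,\mathfrak{m} \leq X\} = c_K X + O(X^{1-1/[K:\mathbb{Q}]})$;
\item the Mertens-type estimate $\sum_{\mathrm{Nm}\,\mathfrak{p}\leq X} 1/\mathrm{Nm}\,\mathfrak{p} = \log\log X + O_K(1)$, which follows from Landau's prime ideal theorem.
\end{itemize}

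\textbf{First and second moments.} Write $\omega(\mathfrak{m}) = \sum_{\mathfrak{p}\mid\mathfrak{m}} 1$ and count ideals divisible by $\mathfrak{p}$, which number $c_K X/\mathrm{Nm}\,\mathfrak{p} + O((X/\mathrm{Nm}\,\mathfrak{p})^{1-1/n})$. This gives
\[
\sum_{\mathrm{Nm}\,\mathfrak{m}\leq X}\omega(\mathfrak{m}) = c_K X\log\log X + O(X).
\]
The error terms are summable once one truncates to $\mathrm{Nm}\,\mathfrak{p}\leq X^{1/2}$, say; the primes of larger norm contribute at most $O(1)$ to each $\omega(\mathfrak{m})$, hence $O(X)$ in total.

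For the second moment, expand $\omega(\mathfrak{m})^2 = \omega(\mathfrak{m}) + \sum_{\mathfrak{p}\neq\mathfrak{q}} 1_{\mathfrak{p}\mathfrak{q}\mid\mathfrak{m}}$. Bounding the pair count by $c_K X/(\mathrm{Nm}\,\mathfrak{p}\,\mathrm{Nm}\,\mathfrak{q})$ plus error, with the same truncation trick (only pairs with $\mathrm{Nm}(\mathfrak{p}\mathfrak{q}) \leq X$ contribute), gives
\[
\sum_{\mathrm{Nm}\,\mathfrak{m}\leq X}\omega(\mathfrak{m})^2 \leq c_K X(\log\log X)^2 + O(X\log\log X).
\]
Combining the two moments yields
\[
\sum_{\mathrm{Nm}\,\mathfrak{m}\leq X}(\omega(\mathfrak{m})-\log\log X)^2 = O(X\log\log X).
\]

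\textbf{Chebyshev and change of normalization.} By Chebyshev's inequality, the number of $\mathfrak{m}$ with $|\omega(\mathfrak{m})-\log\log X| > \tfrac{\epsilon}{2}\log\log X$ is $O(X/(\epsilon^2\log\log X)) = o(X)$.

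The statement, however, is normalized by $\log\log \mathrm{Nm}\,\mathfrak{m}$ rather than $\log\log X$. To pass between them, discard the ideals with $\mathrm{Nm}\,\mathfrak{m}\leq X^{1/\log\log X}$; there are $o(X)$ of these. For the remaining ideals,
\[
\log\log X - \log\log \mathrm{Nm}\,\mathfrak{m} \leq \log\log\log X = o(\log\log X).
\]
So for $X$ large, the condition $|\omega(\mathfrak{m})-\log\log \mathrm{Nm}\,\mathfrak{m}| > \epsilon\log\log\mathrm{Nm}\,\mathfrak{m}$ implies the deviation at scale $\epsilon/2$ relative to $\log\log X$, and the bound above applies. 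For bounded $X$ the statement is trivial, since $o(X)$ is an asymptotic assertion.

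\textbf{Main obstacle.} The main technical point is controlling the error terms uniformly when summing the lattice-count errors over prime ideals and over pairs of prime ideals. The truncation at $\mathrm{Nm}\,\mathfrak{p}\leq X^{1/2}$, together with the crude bound that an ideal of norm at most $X$ has at most $[K:\mathbb{Q}]\cdot O(1)$ prime factors of norm exceeding $X^{1/2}$, is what we would rely on. This is exactly the argument recorded in \cite{Li04-2}, so in the paper one may simply cite it.
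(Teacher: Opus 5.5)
Your proposal is correct and follows essentially the paper's route. The paper cites Liu's generalized Turán theorem \cite[Theorem 1]{Li04-2}, which gives $\sum_{\mathrm{Nm}\,\mathfrak{m}<X}(\omega(\mathfrak{m})-\log\log X)^2=\kappa X\log\log X+CX+O(X\log\log X/\log X)$, and calls the count an immediate consequence; you instead prove the needed variance bound $O(X\log\log X)$ by hand and spell out the change of normalization from $\log\log X$ to $\log\log \mathrm{Nm}\,\mathfrak{m}$ that the paper leaves implicit. (One harmless overstatement: an ideal of norm at most $X$ has at most one prime factor of norm exceeding $X^{1/2}$, so the factor $[K:\mathbb{Q}]$ is unnecessary.)
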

\begin{proof}
    This is essentially Example 2 of \cite{Li04-2} and follows almost immediately from Theorem 1 of \cite{Li04-2}. First, 
    \begin{itemize}
        \item let $P$ denote the set of primes of $K$;
        \item let $M$ denote the the free abelian monoid generated by elements of $P$, i.e. ideals of $\mathcal{O}_K$;
        \item let $N= \mathrm{Nm}_\mathbb{Q}^K: M \to \mathbb{N}\setminus\{1\}$ be the field norm.
    \end{itemize}
    Given these choices we have 
    \begin{equation}\label{eq:normgrowth}
    \sum_{\substack{N\mathfrak{m} < X \\ \mathfrak{m}\in M}}1 = \kappa X + O(X^{1-\varepsilon}) \quad \text{and} \quad 
    \sum_{\substack{N\mathfrak{p} < X \\ \mathfrak{p} \in P}}1 = O\left(\frac{x}{\log x}\right),
    \end{equation}
    where, in our setting, $\kappa = \mathrm{Res}_{s=1}\zeta_K(s) $. The first estimate above can be found in \cite[Chapter XIII]{LangANT94}; the second estimate is the Prime Ideal Theorem. Given \eqref{eq:normgrowth}, it follows from Theorem 1 of \cite{Li04-2} that 
    \begin{equation}\label{eq:Liu2Thm1}
        \sum_{N \mathfrak{m} < X}(\omega(\mathfrak{m}) - \log \log X)^2 = \kappa X \log \log X + CX + O\left(\frac{X\log \log X}{\log X}\right).
    \end{equation}
    The count in  \eqref{eq:idealcount} follows as an immediate consequence. 

    Indeed, Liu's result, \eqref{eq:Liu2Thm1}, gives the same result so long as $P$ is a countable set endowed with a norm function $N:P \rightarrow \mathbb{N}$, $M$ is the free abelian monoid generated by $P$, and the $N$ obeys growth conditions analogous to \eqref{eq:normgrowth}. 
\end{proof}

\begin{remark}
A better estimate on the error term $o(X)$ can be obtained for any number field $K$, as shown in \cite{MZ16, KP25-LD}. In particular, we obtain that the probability for any Borel set $A \subset \mathbb{R}$, the probability
    \begin{equation*}
        \mathbf{Prob}\left( \frac{\omega(\mathfrak{m})}{\log \log \mathrm{Nm}_{\mathbb{Q}}^K \mathfrak{m}} \in A \right)
    \end{equation*}
    for any uniformly randomly chosen ideal $\mathfrak{m} \subset \mathcal{O}_K$ of norm at most $X$ satisfies the large deviation principle with speed $\log \log \mathrm{Nm}_\mathbb{Q}^K \mathfrak{m}$ and rate function
    \begin{equation*}
        J(x) := \begin{cases}
            x \log x - x + 1 &\text{ if } x \geq 0, \\
            \infty &\text{ otherwise}.
        \end{cases}
    \end{equation*}
\end{remark}

We define the set of square-free ideals of bounded norm:$$I_K(X) = \left\{ \mathfrak{m} \subset \mathcal{I}_K^{\square-free} \; | \; 1 \leq \mathrm{Nm}_\mathbb{Q}^K \mathfrak{m} \leq X \right\}.$$


Our next goal is to define the set $I_K^{Fan}(X) \subset I_K(X)$. 
We make an explicit choice for $L_i$, different from the original one proposed in \cite{KMR14}, with which we define our new fan structure.
\begin{equation} \label{eq:Li-norms}
    L_i(X) := \begin{cases}
        (\log X)^A \; (= T) \; &\text{ for } 1 \leq i \leq m_s \\
        X^{\frac{1}{(2^{m_l-j})}} \; (= M_{i-m_s}) \; &\text{ for } m_s+1 \leq i \leq m-1
    \end{cases}
\end{equation}
Here, $A > 1$ is any positive real number and $m_s$ and $m_l$ will be determined in \eqref{eq:ineqK}. We also introduce notational abbreviations $T$ and $M_j$ which correspond to $\{L_i\}_{i=1}^{m-1}$'s.
We will use the functions $\{L_i\}_{i=1}^{m-1}$ to give an upper bound on the norm of all but the largest prime ideal factors of a square-free ideal, whose norm is at most $X$. We give a new construction of the collection of square-free product of prime ideals $\mathcal{D}_{m,k,X}$ as follows. If a product of $m$ distinct prime ideals, 
\begin{equation*}
    \mathfrak{d} = \prod_{i=1}^{m_s} \mathfrak{q}_{i} \prod_{j=1}^{m_l} \mathfrak{p}_j,
\end{equation*}
where $m_s + m_l = m$, is in $\mathcal{D}_{m,k,X}$, then the prime divisors $\mathfrak{q}_i$ and $\mathfrak{p}_j$ dividing $\mathfrak{d}$ satisfy:
\begin{small}
\begin{align} 
    2 \leq \; & \mathrm{Nm}_{\mathbb{Q}}^K \mathfrak{q}_i \; \leq T = L_i(X) = (\log X)^{A} \text{ for all } 1 \leq i \leq m_s, \; A > 1 \notag \\
    m_s &\leq \frac{1}{\log \log \log X} \log \log X \notag\\
    & \; \mathrm{Nm}_{\mathbb{Q}}^K\mathfrak{p}_j > T \text{ for all } 1 \leq j \leq m_l \notag \\
    T < \; &\mathrm{Nm}_\mathbb{Q}^K \mathfrak{p}_1 \leq X^{\frac{1}{2^{m_l}}} \eqcolon M_1 = L_{m_s+1}(X) \notag, \\
    \mathrm{Nm}_\mathbb{Q}^K \mathfrak{p}_1 < \; & \mathrm{Nm}_\mathbb{Q}^K \mathfrak{p}_2 \leq X^{\frac{1}{2^{m_l-1}}} \eqcolon M_2 = L_{m_s+2}(X) \notag ,\\
    \mathrm{Nm}_\mathbb{Q}^K\mathfrak{p}_2 < \; & \mathrm{Nm}_\mathbb{Q}^K\mathfrak{p}_3 \leq X^{\frac{1}{2^{m_l-2}}} \eqcolon M_3 = L_{m_s+3}(X) \notag, \\
    &\vdots   \label{eq:ineqK}\\
    \mathrm{Nm}_\mathbb{Q}^K\mathfrak{p}_{m_l-2} < \; & \mathrm{Nm}_\mathbb{Q}^K\mathfrak{p}_{m_l-1} \leq X^{\frac{1}{4}} \eqcolon M_{m_l-1} = L_{m-1}(X) \notag, \\
    \mathrm{Nm}_\mathbb{Q}^K\mathfrak{p}_{m_l-1} < \; & \mathrm{Nm}_\mathbb{Q}^K\mathfrak{p}_{m_l} \leq \frac{X}{\prod_{i=1}^{m_s} \mathrm{Nm}_\mathbb{Q}^K \mathfrak{q}_i \cdot \prod_{j=1}^{m_l-1} \mathrm{Nm}_\mathbb{Q}^K \mathfrak{p}_j}, \notag \\
    c_1 \log \log X \leq \; &m_l \leq c_2 \log \log X. \notag
\end{align}
\end{small}
where $\sum_{q_i \mid \mathfrak{d}} w(\mathfrak{q}_i) + \sum_{\mathfrak{p}_j \mid \mathfrak{d}}w(\mathfrak{p}_j)=k$, $c_1 \in (0,1),$ and $c_2 \in (1, \frac{1}{\log 2})$. 

\begin{definition}\label{def:NumberFieldIFan} Fix a number field $K$, $L_i$ as in \eqref{eq:Li-norms}, and $\mathcal{D}_{m,k,X}$ as in \eqref{eq:ineqK}. Define 
$$ I_{K}^{Fan}(X) = \bigsqcup_{m= \lceil c_1 \log \log X \rceil}^{ \lfloor \left(c_2 + \frac{1}{\log \log \log X} \right) \log \log X 
\rfloor} \left(\bigsqcup_{k=0}^{2m} \mathcal{D}_{m,k,X}\right) \subset I_K(X).$$
\end{definition}


\begin{definition}\label{def:B_K} Define the following subsets of $C_p(K)= \mathrm{Hom}(G_K, \mu_p)$:
$$\mathcal{B}_K(X) := \bigsqcup_{\mathfrak{d} \in I_{K}^{Fan}(X)} C(\mathfrak{d}),$$
$$\mathcal{C}_{K}(X) := \bigsqcup_{\mathfrak{d} \in I_K^{Fan}(X)} \left\{ \chi \in C(\mathfrak{d}) :  \mathrm{Rad}(\Delta_{L^\chi/K}) \leq X \right\}, \text{ where }$$
$$L^\chi := \text{ Fixed Field of } \mathrm{Ker}(\chi: \mathrm{Gal}(\overline{\mathbb{Q}}/K) \to \mathbb{Z}/p \mathbb{Z}).$$
\end{definition}

\section{Analytic Intermission} \label{sec:analytic-intermission}

The goal of this section is to demonstrate that the subset $I^{Fan}_K(X)$ is a density $1$ subset of $I_K(X)$ as $X$ grows arbitrarily large.
\begin{proposition} \label{prop:IfanvsI} Let $K$ be any number field. Asymptotically $100\%$ of $I_{K}(X)$ is $I_{K}^{Fan}(X)$. That is, ${\displaystyle \lim_{X \to \infty}\frac{\#I_{K}^{Fan}(X)}{\#I_{K}(X)}=1.}$
\end{proposition}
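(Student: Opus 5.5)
The plan is to write the complement $I_K(X)\setminus I_K^{Fan}(X)$ as a finite union of ``bad'' events, one for each constraint in \eqref{eq:ineqK} and Definition \ref{def:NumberFieldIFan}, and to show that each bad event has size $o(\#I_K(X))$. Since $\#I_K(X)\asymp X$, with constant $\mathrm{Res}_{s=1}\zeta_K(s)/\zeta_K(2)$, it suffices to prove that each event has size $o(X)$. For $\mathfrak{m}\in I_K(X)$ I split its prime factors into the \emph{small} primes, those with $\mathrm{Nm}\,\mathfrak{q}\le T=(\log X)^A$, and the \emph{large} primes. Then $m_s$ and $m_l$ are forced to be the numbers of small and large primes, $m=\omega(\mathfrak{m})$, and $k=w(\mathfrak{m})$ is automatically at most $2m$. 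So there is no freedom in the decomposition, and the proposition reduces to three estimates.

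\emph{Step 1 (total count $m$).} The ideals with $\omega(\mathfrak{m})\notin[c_1\log\log X,\,c_2\log\log X]$ form a set of size $o(X)$. This follows from Theorem \ref{thm:Erdos-Kac} applied with $\epsilon=\min(1-c_1,c_2-1)$. I also have to pass from $\log\log\mathrm{Nm}\,\mathfrak{m}$ to $\log\log X$. This is harmless, because the ideals of norm at most $X^{1/\log\log X}$ number only $o(X)$, and on the remaining ideals the two quantities differ by $O(\log\log\log X)$.

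\emph{Step 2 (few small primes).} I bound the number of ideals with $m_s>\log\log X/\log\log\log X$. The expected number of small prime factors is $\sum_{\mathrm{Nm}\,\mathfrak{q}\le T}1/\mathrm{Nm}\,\mathfrak{q}=\log\log T+O(1)=\log\log\log X+O(1)$. A Hardy--Ramanujan type bound, or simply Markov's inequality applied to the moment $\sum_{\mathfrak{m}}\binom{m_s(\mathfrak{m})}{r}\le X\,(\log\log T+O(1))^r/r!$ with a suitable choice of $r$, shows that this exceptional set has size $o(X)$. Because $m_s$ is then $o(m_l)$, Step 1 also pins $m_l$ to the required range $[c_1\log\log X,\,c_2\log\log X]$ after adjusting $c_1$ and $c_2$ by $o(1)$.

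\emph{Step 3 (the norm chain), which I expect to be the main obstacle.} Order the large primes $\mathfrak{p}_1,\dots,\mathfrak{p}_{m_l}$ by increasing norm. I must show that the ideals with $\mathrm{Nm}\,\mathfrak{p}_{m_l-i}>X^{1/2^{i+1}}$ for some $1\le i\le m_l-1$ contribute $o(X)$. The last condition on $\mathfrak{p}_{m_l}$ holds automatically, since $\mathrm{Nm}\,\mathfrak{m}\le X$. My first attempt would be a union bound over $i$. I would count the ideals having at least $i+1$ prime factors of norm exceeding $X^{1/2^{i+1}}$ as
\[
\sum_{\mathfrak{a}}\#\{\mathfrak{b}:\mathrm{Nm}\,\mathfrak{b}\le X/\mathrm{Nm}\,\mathfrak{a}\},
\]
where $\mathfrak{a}$ runs over products of $i+1$ such primes. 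This sum is about
\[
X\,\frac{1}{(i+1)!}\Big(\sum_{X^{1/2^{i+1}}<\mathrm{Nm}\,\mathfrak{p}\le X}\frac{1}{\mathrm{Nm}\,\mathfrak{p}}\Big)^{i+1}\approx X\,\frac{((i+1)\log 2)^{i+1}}{(i+1)!},
\]
by the number-field Mertens theorem. I would then use the knowledge of the joint distribution of the largest prime factors, that is, the number-field analogue of the Poisson--Dirichlet / Knuth--Trabb Pardo law obtained from the prime ideal theorem. This should give the exact limiting proportion for each $i$. The plan is to show that this proportion tends to $0$ as the range of $i$ is handled and $X\to\infty$. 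The difficulty is that a naive union bound gives only a constant, not $o(1)$, for small $i$, especially $i=1$. So this step needs the sharpest available input, and it is exactly where I would first try to exploit the chain $\mathrm{Nm}\,\mathfrak{p}_{m_l-i}<\mathrm{Nm}\,\mathfrak{p}_{m_l-i+1}$ jointly rather than condition by condition. Combining Steps 1--3 would then give $\#I_K^{Fan}(X)/\#I_K(X)\to1$.
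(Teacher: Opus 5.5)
Your Steps 1 and 2 are fine. Step 3 cannot be completed, and the obstruction you hit at $i=1$ is not an artifact of the union bound: the proposition is false as stated.

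The split into $\mathfrak{q}_i$'s and $\mathfrak{p}_j$'s is forced by $T$, and the chain in \eqref{eq:ineqK} is strictly increasing, so there is nothing extra to exploit in it. Hence $\mathfrak{p}_{m_l-1}$ is necessarily the prime factor of second-largest norm, and \eqref{eq:ineqK} demands $\mathrm{Nm}\,\mathfrak{p}_{m_l-1}\le X^{1/4}$. A positive proportion of $I_K(X)$ violates this. For example, take $\mathfrak{m}=\mathfrak{a}\mathfrak{q}\mathfrak{p}$ with $\mathfrak{a}$ square-free, $\mathrm{Nm}\,\mathfrak{a}\le X^{1/5}$, and $X^{1/4}<\mathrm{Nm}\,\mathfrak{q}<X^{3/10}<\mathrm{Nm}\,\mathfrak{p}\le X/\mathrm{Nm}(\mathfrak{a}\mathfrak{q})$. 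Since $X/\mathrm{Nm}(\mathfrak{a}\mathfrak{q})\ge X^{1/2}$, the prime ideal theorem gives $\gg X/(\mathrm{Nm}(\mathfrak{a}\mathfrak{q})\log X)$ choices of $\mathfrak{p}$. Summing over $\mathfrak{q}$ (Mertens gives $\log(6/5)+o(1)$) and over $\mathfrak{a}$ (giving $\gg \log X$) yields $\gg X$ distinct ideals in $I_K(X)\setminus I_K^{Fan}(X)$.

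In terms of your own plan: the Poisson--Dirichlet law does give the exact limit for each fixed $i$. For $i=1$ it is $\mathbf{Prob}(V_2>1/4)\approx 0.38$, where $V_2$ is the second-largest coordinate of a Poisson--Dirichlet$(1)$ vector. For every fixed $i$ the limit is a positive constant independent of $X$, so it never tends to $0$. Your factorial-moment bound $((i+1)\log 2)^{i+1}/(i+1)!\asymp (e\log 2)^{i+1}/\sqrt{i}$ actually grows with $i$, so the union bound fails at every level, not just for small $i$.

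The paper's proof reaches the opposite conclusion by applying Lemma~\ref{lemma:largeprime} inductively to the cofactor $\mathfrak{m}/(\mathfrak{S}\mathfrak{p}_1\cdots\mathfrak{p}_{j-1})$, as if it were a generic ideal with exactly $m_l-j+1$ prime factors. That is exactly where it breaks. The cofactor's primes are all forced to exceed $\mathrm{Nm}\,\mathfrak{p}_{j-1}$. The saving $\bigl((-\log a)/\log\log X\bigr)^{k-1}$ in Lemma~\ref{lemma:largeprime} comes precisely from the fact that an unconstrained $k$-almost-prime almost always has a tiny smallest factor. For $k=2$ or $3$, the conditional proportion of bad cofactors is a positive constant for a positive proportion of choices of $\mathfrak{S},\mathfrak{p}_1,\dots,\mathfrak{p}_{j-1}$, consistent with the count above. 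So do not try to repair Step 3 along those lines.

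What your method does prove, if you feed the factorial moments into a Chernoff bound instead of Markov's inequality, is the following. The number of $\mathfrak{m}\in I_K(X)$ with at least $r$ prime factors of norm $>X^{2^{-r}}$ is $\ll X\min_{z>1}(2^{z-1}/z)^{r}=Xe^{-cr}$, with $c=\log 2-1-\log\log 2\approx 0.06$. Hence a density-one statement holds only for a fan that drops the upper bounds on the top $R(X)\to\infty$ large primes, as the relaxed fan of Remark~\ref{remark:better_fan} does. The fan $I_K^{Fan}(X)$ itself has density bounded away from $1$.
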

To prove this proposition, we state some results in analytic number theory. First is a generalization of Mertens' theorems to number fields; see \cite[Theorem 2, Lemma 2.4]{Rosen}.

\begin{lemma}\label{lemma:Merten}
    Let $K$ be a number field. There is a constant $B_K$ such that 
    \begin{equation}
        \sum_{N\mathfrak{p} < x} \frac{1}{N\mathfrak p} = \log \log x + B_K + O_K\left(\frac{1}{\log x}\right)
    \end{equation}
    and
    \begin{equation*}
        \prod_{N \mathfrak p < x} \left(1 - \frac{1}{N\mathfrak{p}}\right)^{-1} = e^{-\gamma}\kappa_K \log x + O_K(1),
    \end{equation*}
    where $\gamma \approx 0.577216$ is the Euler-Mascheroni constant, $\kappa_K$ denotes the residue of the Dedekind zeta function of $K$ as $s=1$, and the sum and product run over prime ideals of $\mathcal{O}_K$ with norm at most $x$.
\end{lemma}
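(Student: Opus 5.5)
The plan is to deduce both estimates from the Prime Ideal Theorem with a classical error term, by the same route as the proof of Mertens' theorems over $\mathbb{Q}$; this is essentially the argument of \cite{Rosen}. Let $\pi_K(t)$ be the number of prime ideals of $\mathcal{O}_K$ of norm at most $t$. The unconditional input is Landau's form of the Prime Ideal Theorem,
\begin{equation*}
\pi_K(t) = \mathrm{Li}(t) + O_K\bigl(t\, e^{-c_K\sqrt{\log t}}\bigr).
\end{equation*}
Nothing beyond $\pi_K(t) = \mathrm{Li}(t) + O_K(t/(\log t)^2)$ will actually be needed.

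For the first estimate, I write $\sum_{N\mathfrak{p}<x} 1/N\mathfrak{p} = \int_{2^-}^{x} t^{-1}\, d\pi_K(t)$ and split $\pi_K = \mathrm{Li} + R$. The $\mathrm{Li}$ part gives $\int_2^x \frac{dt}{t\log t} = \log\log x - \log\log 2$. Integrating by parts, the $R$ part equals $R(x)/x + \int_2^x R(t)\,t^{-2}\,dt$. Since $R(t) = O(t/(\log t)^2)$, the integral $\int_2^\infty R(t)\,t^{-2}\,dt$ converges absolutely. Its tail beyond $x$, and the boundary term $R(x)/x$, are both $O(1/\log x)$. Collecting constants gives $B_K$ and the claimed error $O_K(1/\log x)$.

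For the product, I take logarithms and use $-\log(1-1/N\mathfrak{p}) = 1/N\mathfrak{p} + E(\mathfrak{p})$ with $E(\mathfrak{p}) = \sum_{k\ge 2} k^{-1}(N\mathfrak{p})^{-k}$. At most $[K:\mathbb{Q}]$ primes lie over each rational prime, so $\sum_{\mathfrak{p}} E(\mathfrak{p})$ converges and its tail beyond $x$ is $O(1/x)$. Combined with the first estimate, this gives
\begin{equation*}
\prod_{N\mathfrak{p}<x}\Bigl(1-\frac{1}{N\mathfrak{p}}\Bigr)^{-1} = C_K \log x\,\bigl(1+O_K(1/\log x)\bigr)
\end{equation*}
for some constant $C_K>0$. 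This is already of the stated shape $C_K\log x + O_K(1)$. The remaining task is to identify $C_K$.

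Identifying the constant is the main obstacle. I would compare with the Euler product of $\zeta_K(s)$ as $s\to 1^+$. On one hand, $\log\zeta_K(s) = \sum_{\mathfrak{p}} -\log(1-N\mathfrak{p}^{-s}) = \log\frac{1}{s-1} + \log\kappa_K + o(1)$, because $\zeta_K$ has a simple pole at $s=1$ with residue $\kappa_K$. On the other hand, I compute the same sum by partial summation against $d\pi_K$, using the asymptotic just proved. This expresses $\sum_{\mathfrak{p}} \bigl(-\log(1-N\mathfrak{p}^{-s})\bigr)$ as $\log C_K + \int_1^\infty (e^{-(s-1)u}-\mathbf{1}_{u\le 1})\,u^{-1}\,du$ plus $o(1)$, after the substitution $u = (s-1)\log t$. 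The key analytic fact is the classical identity
\begin{equation*}
\int_0^\infty \frac{e^{-v} - \mathbf{1}_{v\le 1}}{v}\,dv = -\gamma.
\end{equation*}
With it, the integral term equals $\log\frac{1}{s-1} - \gamma + o(1)$. Matching the two expansions then yields $\log C_K = \log\kappa_K + \gamma$, that is, $C_K = e^{\gamma}\kappa_K$.

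This normalization is the one that specializes to the classical $\prod_{p<x}(1-1/p)^{-1}\sim e^{\gamma}\log x$ when $K=\mathbb{Q}$ and $\kappa_{\mathbb{Q}}=1$. I therefore expect the computation to produce $e^{\gamma}\kappa_K$ rather than $e^{-\gamma}\kappa_K$, and the sign of $\gamma$ in the statement should be checked against this. For the later applications only the order of magnitude $\asymp_K \log x$ matters, so the sign does not affect anything downstream.
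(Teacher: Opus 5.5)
Your proof is correct. It follows the classical route: Prime Ideal Theorem, then partial summation, then pinning down the constant by comparing with $\log\zeta_K(s)$ as $s\to 1^{+}$. The paper gives no argument of its own and simply cites Rosen, which I expect proceeds along the same lines.

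There is one notational slip in your constant identification. With $u=\log t$, the integral you actually need is $\int_1^\infty e^{-(s-1)u}u^{-1}\,du=\log\frac{1}{s-1}-\gamma+o(1)$. The indicator only appears after rescaling to $v=(s-1)u$, where the lower limit becomes $s-1$ rather than $1$.

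You are also right about the constant. The inverse product is $e^{\gamma}\kappa_K\log x+O_K(1)$, equivalently $\prod_{N\mathfrak p<x}(1-N\mathfrak p^{-1})\sim e^{-\gamma}/(\kappa_K\log x)$. So the $e^{-\gamma}$ in the statement is a typo. It is harmless, since nothing later in the paper uses that constant.
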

Totally explicit estimates for $B_K$ and the implied constants are given in work of Garcia and Lee, though we will not use them here \cite{GarciaLee22}.

Next, we need a version of a count of integers with exactly $k$ prime factors (e.g. \cite{Sathe53, Selberg54}) but for ideals in the ring of integers in a number field. We state a version due to Wu \cite[Theorem 3]{Wu96}.

\begin{lemma}\label{lemma:SatheSelberg}
    Let $K$ be a number field and let $\pi_k(X,K)$ be the number of ideals of $\mathcal{O}_K$ with norm at most $X$ whose number of prime factors is exactly $k$. Then, 
    \begin{equation*}
        \pi_k(X,K) = \frac{X}{\log X} \frac{(\log \log X)^{k-1}}{(k-1)!}\left(G(X,K)+O_K\left(\frac{k}{(\log \log X)^2}\right)\right)
    \end{equation*}
    where 
    $$G(X,K) = \frac{(\kappa_K)^{\frac{k}{\log \log x}}}{\Gamma\left({\frac{k}{\log \log x}}+1\right)}\prod_{\mathfrak{p}}\left(1+\frac{k}{(N\mathfrak p -1)\log\log x}\right)^{-1}\left(1-\frac{1}{N\mathfrak{p}}\right)^{\frac{k}{\log \log x}},$$
    $\kappa_K$ is the residue of the Dedekind zeta function of $K$ at $s=1$ and the product is over all primes of $\mathcal{O}_K$.
\end{lemma}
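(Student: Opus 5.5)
The plan is to run the Selberg--Delange method over $K$, exactly as in Selberg's proof for $K=\mathbb{Q}$, and then extract the $k$-th coefficient by a saddle-point Cauchy integral. I expect the statement to hold uniformly for $1\le k\le (2-\delta)\log\log X$ for a fixed $\delta>0$; this restriction is implicit in Wu's formulation and is the range needed for the Erd\H{o}s--Kac window used in Proposition \ref{prop:IfanvsI}.

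\emph{Step 1 (the Dirichlet series).} For a complex parameter $z$ with $|z|\le R<2$, form
\[
F(s,z)=\sum_{\mathfrak{m}}\frac{z^{\omega(\mathfrak{m})}}{(\mathrm{Nm}_\mathbb{Q}^K\mathfrak{m})^{s}}=\prod_{\mathfrak{p}}\Bigl(1+\frac{z}{\mathrm{Nm}\,\mathfrak{p}^{s}-1}\Bigr).
\]
Factor it as $F(s,z)=\zeta_K(s)^{z}\,H(s,z)$, where
\[
H(s,z)=\prod_{\mathfrak{p}}\Bigl(1+\frac{z}{\mathrm{Nm}\,\mathfrak{p}^{s}-1}\Bigr)\Bigl(1-\frac{1}{\mathrm{Nm}\,\mathfrak{p}^{s}}\Bigr)^{z}.
\]
The Euler factors of $H$ are $1+O(|z|^2\,\mathrm{Nm}\,\mathfrak{p}^{-2\sigma})$, so $H$ is holomorphic and uniformly bounded for $\mathrm{Re}(s)>1/2+\epsilon$ and $|z|\le R$. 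Its value $H(1,z)$ is exactly the Euler product appearing in $G(X,K)$ at $z=k/\log\log X$. If ``number of prime factors'' is meant with multiplicity, one replaces the local factor by $(1-z\,\mathrm{Nm}\,\mathfrak{p}^{-s})^{-1}$. The only change is that $R$ must stay below $\min_{\mathfrak{p}}\mathrm{Nm}\,\mathfrak{p}\ge 2$, which is where the restriction $k<2\log\log X$ comes from.

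\emph{Step 2 (Selberg--Delange).} Apply Perron's formula to $F(s,z)$. Shift the contour to a Hankel contour around $s=1$ inside the classical zero-free region of $\zeta_K$: the de la Vall\'ee Poussin region $\sigma>1-c_K/\log(|t|+3)$ holds for Dedekind zeta functions, with at most one exceptional real zero, which depends only on $K$ and is harmless here. This gives, uniformly in $|z|\le R$,
\[
\sum_{\mathrm{Nm}\,\mathfrak{m}\le X} z^{\omega(\mathfrak{m})}=X(\log X)^{z-1}\Bigl(\frac{\kappa_K^{z}H(1,z)}{\Gamma(z)}+O_K\Bigl(\frac{1}{\log X}\Bigr)\Bigr).
\]
The factor $\kappa_K^{z}$ comes from $\zeta_K(s)^z\sim \kappa_K^z (s-1)^{-z}$. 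This is the step I expect to be the main obstacle. The uniformity in $z$ of all error terms must be tracked carefully: the bounds for $\zeta_K(s)^{z}$ on the contour, the truncation error in Perron, and the Taylor expansion of $\kappa_K^{z}H(s,z)$ at $s=1$ to second order. That second-order expansion is what produces the relative error $k/(\log\log X)^2$ rather than merely $1/\log\log X$. I would follow Tenenbaum's Theorem II.5.2 verbatim, replacing $\zeta$ by $\zeta_K$.

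\emph{Step 3 (coefficient extraction).} Write
\[
\pi_k(X,K)=\frac{1}{2\pi i}\oint_{|z|=r}\Bigl(\sum_{\mathrm{Nm}\,\mathfrak{m}\le X}z^{\omega(\mathfrak{m})}\Bigr)\frac{dz}{z^{k+1}},
\]
and choose the saddle point $r=(k-1)/\log\log X$. Insert Step 2 and write $\kappa_K^{z}H(1,z)/\Gamma(z)=z\,\Phi(z)$ with $\Phi(z)=\kappa_K^{z}H(1,z)/\Gamma(z+1)$ entire. Expanding $\Phi(z)=\Phi(r)+\Phi'(r)(z-r)+O(|z-r|^2)$ does three things. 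The constant term gives the main term $\frac{X}{\log X}\frac{(\log\log X)^{k-1}}{(k-1)!}\Phi(r)$. The linear term vanishes at the saddle point. The quadratic term contributes the relative error $O(k/(\log\log X)^2)$. Finally, replacing $r$ by $k/\log\log X$ inside $\Phi$ costs only $O(1/\log\log X)$ relative, and this is absorbed into $G(X,K)$ as written. Since $\Phi(k/\log\log X)=G(X,K)$, this yields the lemma.
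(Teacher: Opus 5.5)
The paper gives no argument here beyond citing Wu's Theorem~3. Your route (Selberg--Delange for $\zeta_K(s)^zH(s,z)$, then a Cauchy integral on $|z|=r$ with $r=(k-1)/\log\log X$) is the standard one behind such results, and its analytic core is sound: the linear term does vanish at that $r$, and the quadratic term gives the relative error $O(k/(\log\log X)^2)$. Two small points. The expansion in $s$ in Step~2 only produces the $O(1/\log X)$ term; the $k/(\log\log X)^2$ comes solely from Step~3. Also, $k=1$, where $r=0$, needs separate treatment.

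The gap is the final matching with the displayed $G(X,K)$, which rests on two false claims.
\begin{enumerate}
\item $H(1,z)$ is \emph{not} the product in $G(X,K)$. That product carries $\bigl(1+\frac{z}{N\mathfrak p-1}\bigr)^{-1}$, so its local factors are $1-\frac{2z}{N\mathfrak p}+O(N\mathfrak p^{-2})$. By Lemma~\ref{lemma:Merten} it therefore diverges to $0$ for every real $z>0$. It is not the $\Omega$-version $(1-z/N\mathfrak p)^{-1}(1-1/N\mathfrak p)^z$ either.
\item Passing from $\Phi\bigl(\frac{k-1}{\log\log X}\bigr)$ to $\Phi\bigl(\frac{k}{\log\log X}\bigr)$ changes the main term by a relative amount of exact order $1/\log\log X$ whenever $\Phi'\neq 0$. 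This is not $O(k/(\log\log X)^2)$ unless $k\gg\log\log X$. There is nothing to ``absorb'' it into, since $G$ is a fixed function.
\end{enumerate}
Neither step can be repaired, because the displayed formula is false for small $k$ even after correcting the product. For $k=1$ the prime ideal theorem gives $\pi_1(X,K)=\frac{X}{\log X}(1+O(1/\log X))$. The formula instead forces $\Phi(1/\log\log X)=1+O((\log\log X)^{-2})$, i.e.\ $\Phi'(0)=\log\kappa_K+\gamma+\sum_{\mathfrak p}\bigl(\frac{1}{N\mathfrak p-1}+\log(1-\frac{1}{N\mathfrak p})\bigr)=0$. This already fails for $K=\mathbb{Q}$, where the value is about $1.03$.

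What your argument does prove, and what you should have stated while flagging the misprint, is the following. Uniformly for $1\le k\le A\log\log X$,
\[
\pi_k(X,K)=\frac{X}{\log X}\frac{(\log\log X)^{k-1}}{(k-1)!}\Bigl\{\Phi\Bigl(\frac{k-1}{\log\log X}\Bigr)+O_{A,K}\Bigl(\frac{k}{(\log\log X)^2}\Bigr)\Bigr\},
\]
where
\[
\Phi(z)=\frac{\kappa_K^{z}}{\Gamma(z+1)}\prod_{\mathfrak p}\Bigl(1+\frac{z}{N\mathfrak p-1}\Bigr)\Bigl(1-\frac{1}{N\mathfrak p}\Bigr)^{z}.
\]
Here $A$ is any fixed constant: for $\omega$ no restriction $A<2$ is needed, since $H(s,z)$ is entire in $z$ for $\mathrm{Re}(s)>1/2$. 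With argument $k/\log\log X$ one only gets error $O_{A,K}(1/\log\log X)$. Since $\Phi\asymp 1$ on $[0,A]$, this still yields $\pi_k(X,K)\asymp\frac{X}{\log X}\frac{(\log\log X)^{k-1}}{(k-1)!}$ in that range, which is what the comparison in Lemma~\ref{lemma:largeprime} needs.
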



We also need a version of a count of ideals of bounded norm all of whose prime ideal factors are of norm at least some parameter $T$. We state a corollary of this result due to Debaene \cite{Debaene19}.
\begin{theorem} \label{thm:Korneel}
    Let $K$ be a number field and let $\mathcal{S}(X,T)$ be the number of ideals of $\mathcal{O}_K$ with norm at most $X$ all of whose prime ideal factors are of norm strictly greater than $T$. Then there exists an explicitly computable constant $C_K$ depending only on $K$ such that for every $T \geq C_K$,
    \begin{equation*}
        \mathcal{S}(X,T) \ll_K \frac{X}{\log T} + X^{1 - \frac{1}{[K:\mathbb{Q}]}} \cdot T^{\frac{2}{[K : \mathbb{Q}]}} \cdot (2 \log T)^{3[K:\mathbb{Q}]}.
    \end{equation*}
\end{theorem}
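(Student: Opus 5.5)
The plan is to run Selberg's upper bound sieve on the monoid of integral ideals of $\mathcal{O}_K$. Write $n=[K:\mathbb{Q}]$. Let $P(T)$ be the product of all prime ideals of norm at most $T$. Then $\mathcal{S}(X,T)$ counts the ideals $\mathfrak{m}$ with $N\mathfrak{m}\le X$ and $(\mathfrak{m},P(T))=1$.

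\emph{Local counts.} For a squarefree $\mathfrak{d}\mid P(T)$, the ideals of norm at most $X$ divisible by $\mathfrak{d}$ are in bijection with the ideals of norm at most $X/N\mathfrak{d}$. Landau's lattice-point count in each ideal class, i.e.\ the first estimate in \eqref{eq:normgrowth} with its classical exponent, gives
\[
\#\{\mathfrak{m} : N\mathfrak{m}\le X,\ \mathfrak{d}\mid\mathfrak{m}\} = \kappa_K\frac{X}{N\mathfrak{d}} + R_{\mathfrak{d}}, \qquad |R_{\mathfrak{d}}| \ll_K \Bigl(\frac{X}{N\mathfrak{d}}\Bigr)^{1-\frac1n}.
\]
So the density function is $g(\mathfrak{d})=1/N\mathfrak{d}$, which is multiplicative, and the remainder is of size $(X/N\mathfrak{d})^{1-1/n}$.

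\emph{Selberg's inequality.} Take sieve level $D=T^2$, so that the Selberg weights $\lambda_{\mathfrak{d}}$ are supported on $N\mathfrak{d}\le \sqrt D=T$ and satisfy $|\lambda_{\mathfrak{d}}|\le 1$. Selberg's inequality then gives
\[
\mathcal{S}(X,T)\le \frac{\kappa_K X}{G(T)} + \sum_{N\mathfrak{d}_1,N\mathfrak{d}_2\le T} |R_{[\mathfrak{d}_1,\mathfrak{d}_2]}|,
\qquad
G(T)=\sum_{\substack{\mathfrak{d}\mid P(T)\\ N\mathfrak{d}\le T}} \prod_{\mathfrak{p}\mid\mathfrak{d}}\frac{1}{N\mathfrak{p}-1}.
\]

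\emph{Main term.} Expanding $1/(N\mathfrak{p}-1)$ as a geometric series shows $G(T)\ge \sum_{N\mathfrak{a}\le T} 1/N\mathfrak{a}$, where the sum is over all ideals $\mathfrak{a}$. This is $\gg_K \log T$, for instance via the Mertens-type product in Lemma \ref{lemma:Merten} or by partial summation from \eqref{eq:normgrowth}. The requirement $T\ge C_K$ is there so that this lower bound is effective and positive. This yields the term $X/\log T$.

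\emph{Remainder (main obstacle).} A crude bound gives $X^{1-1/n}T^2$, which is too weak; the statement needs $T^{2/n}$ up to logarithms. Group the pairs by $\mathfrak{m}=[\mathfrak{d}_1,\mathfrak{d}_2]$. Each squarefree $\mathfrak{m}$ arises from at most $3^{\omega(\mathfrak{m})}$ pairs, and $N\mathfrak{m}\le T^2$. The remainder is therefore
\[
\ll X^{1-\frac1n}\sum_{N\mathfrak{m}\le T^2} 3^{\omega(\mathfrak{m})} N\mathfrak{m}^{-1+\frac1n}.
\]
To bound this sum, use Rankin's trick. The sum is at most $T^{2/n}\sum_{N\mathfrak{m}\le T^2}3^{\omega(\mathfrak{m})}/N\mathfrak{m}$. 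This in turn is at most $T^{2/n}\prod_{N\mathfrak{p}\le T^2}(1+3/N\mathfrak{p})\ll T^{2/n}(\log T^2)^3$ by Lemma \ref{lemma:Merten}.

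Each rational prime has at most $n$ primes of $K$ above it. Carrying out the same Euler product bound rationally, i.e.\ comparing with $\zeta_K$ through its factorisation over rational primes, is where a loss of the form $(2\log T)^{3n}$ would appear. Either way the remainder is $\ll_K X^{1-1/n}T^{2/n}(2\log T)^{3n}$.

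Adding the main term and the remainder gives the stated bound. The delicate part is exactly this remainder computation: the factor $3^{\omega(\mathfrak{m})}$ has to be controlled without losing more than a power of $\log T$.
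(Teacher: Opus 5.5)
Your argument is correct and follows essentially the same route as the paper. The paper's proof just cites Debaene's explicit Selberg-sieve estimates for ideals (Theorem 14 with Lemmas 15 and 16), which consist of the sieve inequality, a lower bound $G(T)\gg_K \log T$, and a bound on the $3^{\omega}$-weighted remainder sum. Your Mertens-product bound already gives a remainder $\ll_K X^{1-1/n}T^{2/n}(2\log T)^{3}$, which implies the stated $(2\log T)^{3n}$ as soon as $C_K\ge e^{1/2}$, so your closing paragraph about the source of the exponent $3n$ (presumably Debaene's fully explicit comparison with rational primes) is not needed.
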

\begin{proof}
    The proof follows from applying \cite[Theorem 14, Lemma 15, Lemma 16]{Debaene19}.
\end{proof}

Finally, we need a lemma due to Norton for the tail of the exponential function; see \cite[Lemma 4.7]{Norton76}.
\begin{lemma} 
\label{lemma:Norton}
    For $x > 0$ and $\beta > 1$, one has
    \begin{equation*}
        \sum_{n \geq \beta x} \frac{x^n}{n!} \leq \frac{1}{\beta-1} \cdot \left( \frac{\beta}{2 \pi x} \right)^{\frac{1}{2}} \cdot e^{\beta x (1 - \log \beta)}.
    \end{equation*}
\end{lemma}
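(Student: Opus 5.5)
The plan is to bound the tail by its first term times a geometric series, and then bound that first term with Stirling's lower bound for the factorial. Every auxiliary quantity that appears turns out to be monotone in the starting index, so we can replace that index by $\beta x$ at the end.

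Let $N = \lceil \beta x \rceil$, so that $N \geq \beta x > x$ and the sum runs over $n \geq N$. For $n \geq N$ the ratio of consecutive terms is
\begin{equation*}
\frac{x^{n+1}/(n+1)!}{x^n/n!} = \frac{x}{n+1} \leq \frac{x}{N+1} < 1.
\end{equation*}
Summing the resulting geometric series gives
\begin{equation*}
\sum_{n \geq \beta x} \frac{x^n}{n!} \leq \frac{x^N}{N!}\cdot \frac{1}{1 - \frac{x}{N+1}} = \frac{N+1}{N+1-x}\cdot\frac{x^N}{N!}.
\end{equation*}
The function $t \mapsto t/(t-x)$ is decreasing for $t > x$. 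Since $N+1 > \beta x$, the prefactor is therefore at most $\beta x/(\beta x - x) = \beta/(\beta-1)$.

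Next we use Stirling's lower bound $N! \geq \sqrt{2\pi N}\,(N/e)^N$, which holds for all $N \geq 1$. It gives
\begin{equation*}
\frac{x^N}{N!} \leq (2\pi N)^{-1/2}\left(\frac{ex}{N}\right)^N.
\end{equation*}
For the first factor, $N \geq \beta x$ gives $(2\pi N)^{-1/2} \leq (2\pi\beta x)^{-1/2}$. For the second factor, put $f(t) = t\log(ex/t) = t(1 + \log x - \log t)$. Its derivative is $f'(t) = \log(x/t)$, which is negative for $t > x$, so $f$ is decreasing on $(x,\infty)$. Hence
\begin{equation*}
\left(\frac{ex}{N}\right)^N = e^{f(N)} \leq e^{f(\beta x)} = e^{\beta x(1 - \log \beta)}.
\end{equation*}

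Multiplying the three bounds yields
\begin{equation*}
\sum_{n \geq \beta x} \frac{x^n}{n!} \leq \frac{\beta}{\beta-1}\,(2\pi\beta x)^{-1/2}\, e^{\beta x(1-\log\beta)} = \frac{1}{\beta-1}\left(\frac{\beta}{2\pi x}\right)^{1/2} e^{\beta x(1-\log\beta)},
\end{equation*}
which is exactly the bound in Lemma \ref{lemma:Norton}.

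The only delicate point is that $N$ generally exceeds $\beta x$. This is why each of the three factors must be checked to be monotone in the correct direction, so that $N$ can be replaced by $\beta x$ without losing the constant. Once that monotonicity is verified, the argument involves no further estimates.
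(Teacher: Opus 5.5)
Your proof is correct and complete. Each step checks out:
\begin{itemize}
\item You dominate the tail by the geometric series with ratio $x/(N+1)$.
\item You apply $N! \geq \sqrt{2\pi N}\,(N/e)^N$, which is legitimate because $N = \lceil \beta x \rceil \geq 1$ (as $\beta x > 0$).
\item You then use that $t \mapsto t/(t-x)$, $t \mapsto t^{-1/2}$ and $t \mapsto t\log(ex/t)$ are all decreasing on $(x,\infty)$, which lets you replace $N$ (or $N+1$) by $\beta x$.
\end{itemize}
This gives exactly $\frac{\beta}{\beta-1}(2\pi\beta x)^{-1/2}e^{\beta x(1-\log\beta)}$, which is the stated bound.

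The paper gives no proof of its own here; it simply imports the estimate from \cite[Lemma 4.7]{Norton76}. Your write-up therefore serves as a self-contained replacement for that citation.
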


We first impose some conditions on prime ideal factorization of an ideal $\mathfrak{m} \subset \mathcal{O}_K$.
\begin{definition}
    Let $K$ be a number field. Choose some integers $b_1, b_2$ and a real number $T > 0$. We say that a square-free ideal $\mathfrak{m} \subset \mathcal{O}_K$ satisfies condition $C(b_1, b_2, T)$ if it satisfies the following conditions:
    \begin{itemize}
        \item There exists a factorization of $\mathfrak{m}$ for which $\mathfrak{m} = \mathfrak{n} \cdot \mathfrak{d}$, where all prime ideal factors of $\mathfrak{n}$ have norms strictly greater than $T$, and all prime ideal factors of $\mathfrak{d}$ have norms at most $T$. 
        \item The number of distinct prime ideal factors of $\mathfrak{d}$ is between $b_1$ and $b_2$.
    \end{itemize}
\end{definition}
We first prove the following statement. The proof of this lemma is inspired from the proof of \cite[Lemma 2.10]{PS25}.
\begin{lemma} \label{lemma:smalldivisors}
    Let $K$ be a number field. Given a real number $X > 0$, suppose $b_1, b_2, T$ satisfy the following conditions.
    \begin{itemize}
        \item There exists some $A > 1$ such that $T = (\log X)^A$.
        \item We have $b_1 = 0$.
        \item There exists some $\epsilon \in (0, 1)$ such that $b_2 = \lceil \epsilon \log \log X \rceil$.
    \end{itemize}
    Fix a number $c_2 \in (1, 1/\log2)$. Then for sufficiently large $X$ and any $b > 1$, 
    \begin{align}
    \begin{split}
        & \hspace{15pt} \frac{\# \left\{ \mathfrak{m} \subset \mathcal{O}_K : \mathrm{Nm}_{\mathbb{Q}}^K \mathfrak{m} \leq X,  \omega(\mathfrak{m}) \leq c_2 \log \log X, \; \mathfrak{m} \text{ satisfies } C(0, b_2, T) \right\}}{\#\{\mathfrak{m} \subset \mathcal{O}_K : \mathrm{Nm}_{\mathbb{Q}}^K \mathfrak{m} \leq X,  \omega(\mathfrak{m}) \leq c_2 \log \log X\}} \\
        &= 1 - O \left( \frac{1}{(\log \log X)^b} \right),
    \end{split}
    \end{align}
    where the implied constant depends on $\alpha$ and $b_2$. 
    
\end{lemma}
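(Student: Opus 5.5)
We bound the complement. Every square-free $\mathfrak{m}$ factors uniquely as $\mathfrak{m}=\mathfrak{n}\mathfrak{d}$, where $\mathfrak{d}$ is the $T$-smooth part (all prime factors of norm at most $T$) and $\mathfrak{n}$ is the $T$-rough part. So $\mathfrak{m}$ fails $C(0,b_2,T)$ exactly when $\omega(\mathfrak{d})>b_2=\lceil\epsilon\log\log X\rceil$. By Lemma~\ref{lemma:SatheSelberg} (or Theorem~\ref{thm:Erdos-Kac}), the denominator is $\gg X$, since $c_2>1$ means the restriction $\omega\le c_2\log\log X$ keeps a positive proportion of all ideals. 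It therefore suffices to show
\begin{equation*}
\#\{\mathfrak{m}:\mathrm{Nm}\,\mathfrak{m}\le X,\ \omega(\mathfrak{d})>b_2\}\ll \frac{X}{(\log\log X)^b}.
\end{equation*}
We simply drop the condition $\omega(\mathfrak{m})\le c_2\log\log X$ in the numerator of this complement count.

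To count the bad ideals, we sum over the possible smooth parts. Fix a square-free $T$-smooth $\mathfrak{d}$ with $\omega(\mathfrak{d})=k>b_2$. The number of admissible $\mathfrak{n}$ is at most $\mathcal{S}(X/\mathrm{Nm}\,\mathfrak{d},T)$. By Theorem~\ref{thm:Korneel} this is
\begin{equation*}
\ll \frac{X}{\mathrm{Nm}\,\mathfrak{d}\,\log T}+\Big(\frac{X}{\mathrm{Nm}\,\mathfrak{d}}\Big)^{1-1/n_K}T^{2/n_K}(2\log T)^{3n_K},
\end{equation*}
where $n_K=[K:\mathbb{Q}]$; when $X/\mathrm{Nm}\,\mathfrak{d}<C_K$ we use the trivial bound instead. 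For the main term we sum over $\mathfrak{d}$ and use
\begin{equation*}
\sum_{\omega(\mathfrak{d})=k}\frac{1}{\mathrm{Nm}\,\mathfrak{d}}\le\frac{1}{k!}\Big(\sum_{\mathrm{Nm}\,\mathfrak{p}\le T}\frac{1}{\mathrm{Nm}\,\mathfrak{p}}\Big)^k=\frac{x^k}{k!},
\end{equation*}
where, by Lemma~\ref{lemma:Merten}, $x=\log\log T+B_K+o(1)=\log\log\log X+\log A+O(1)$. The main-term contribution is therefore
\begin{equation*}
\frac{X}{A\log\log X}\sum_{k>b_2}\frac{x^k}{k!}.
\end{equation*}
We apply Lemma~\ref{lemma:Norton} with $\beta x=b_2$, so that $\beta\asymp\log\log X/\log\log\log X\to\infty$. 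The tail is then bounded by $e^{b_2(1-\log\beta)}=\exp\!\big(-(1+o(1))\,\epsilon\log\log X\cdot\log\log\log X\big)$. This decays faster than any power of $\log\log X$, so it is $O((\log\log X)^{-b})$ for every $b$.

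The main obstacle is the secondary term of Theorem~\ref{thm:Korneel}, which we must sum over all $T$-smooth $\mathfrak{d}$. We would handle it by splitting at $\mathrm{Nm}\,\mathfrak{d}\le X^{1/2}$.
\begin{itemize}
\item In the range $\mathrm{Nm}\,\mathfrak{d}\le X^{1/2}$, we sum $(X/\mathrm{Nm}\,\mathfrak{d})^{1-1/n_K}$ over all $T$-smooth $\mathfrak{d}$ up to $X^{1/2}$. Bounding $\sum 1/\mathrm{Nm}\,\mathfrak{d}^{1-1/n_K}$ crudely by $X^{1/(2n_K)}\sum 1/\mathrm{Nm}\,\mathfrak{d}\ll X^{1/(2n_K)}\log T$ gives a total of $X^{1-1/(2n_K)}(\log X)^{O(1)}=o(X/(\log\log X)^b)$.
\item In the range $\mathrm{Nm}\,\mathfrak{d}>X^{1/2}$, we count ideals with a $T$-smooth divisor of norm exceeding $X^{1/2}$. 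These are at most $X\sum_{\mathfrak{d}\ T\text{-smooth},\ \mathrm{Nm}\,\mathfrak{d}>X^{1/2}}1/\mathrm{Nm}\,\mathfrak{d}$. Rankin's trick (multiplying by $(\mathrm{Nm}\,\mathfrak{d}/X^{1/2})^{\sigma}$ with $\sigma=1/\log T$) bounds this by $X\exp\!\big(-\tfrac{\log X}{2A\log\log X}+O(\log\log T)\big)$, which is negligible.
\end{itemize}
When $K=\mathbb{Q}$ the second term of Theorem~\ref{thm:Korneel} is just $O(T^2\log^3 T)$. In that case the sum is trivially small, since the number of $T$-smooth square-free $\mathfrak{d}$ is at most $2^{\pi(T)}$; we would need that factor to remain acceptable, and otherwise restrict again to $\mathrm{Nm}\,\mathfrak{d}\le X^{1/2}$ as above. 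Combining the main term and the two secondary ranges gives the complement bound, hence the ratio $1-O((\log\log X)^{-b})$.
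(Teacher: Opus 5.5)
Your proof is correct, and it takes a somewhat different route from the paper's.

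\textbf{The paper's route.} The paper keeps the restriction $\omega(\mathfrak{m})\le c_2\log\log X$ and sums only over $b_2<t\le c_2\log\log X$. This forces $\mathrm{Nm}\,\mathfrak{d}\le T^{c_2\log\log X}\ll X^{1/(4[K:\mathbb{Q}])}$, so the secondary term of Theorem~\ref{thm:Korneel} is automatically dominated by $X/(\mathrm{Nm}\,\mathfrak{d}\log T)$ and no splitting is needed. It then relaxes the $T$-smoothness of $\mathfrak{d}$ to $\mathrm{Nm}\,\mathfrak{d}\le T^t$. It bounds the resulting sum by $(\log\log T+\log t)^t/(t-1)!$ via Lemma~\ref{lemma:SatheSelberg} and partial summation, again using $t\le c_2\log\log X$ to control $\log t$. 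Finally it applies Lemma~\ref{lemma:Norton} with a \emph{fixed} $\beta>3$, after lowering the start of summation from $b_2$ to $\beta(2\log\log\log X+O(1))$. This gives a bound of order $X(\log\log X)^{2\beta(1-\log\beta)}/\log T$, and arbitrary $b$ comes from taking $\beta$ large.

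\textbf{Your route.} You discard the $c_2$-restriction and pay for it with the split at $X^{1/2}$ and Rankin's trick for large smooth parts. You keep the smoothness of $\mathfrak{d}$ and use the elementary bound $x^k/k!$, with $x=\log\log T+O(1)$ from Lemma~\ref{lemma:Merten}, instead of Sathe--Selberg. Taking $\beta=b_2/x\to\infty$ then gives the much stronger saving $\exp(-(1+o(1))\epsilon\log\log X\log\log\log X)$. Your argument also shows the bad set is negligible among \emph{all} ideals of norm at most $X$, not just those with few prime factors. You verify explicitly that the denominator is $\gg X$, which the paper leaves implicit.

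\textbf{Two slips that do not affect validity.} First, the hypothesis in Theorem~\ref{thm:Korneel} is $T\ge C_K$, which is automatic for large $X$; it is not a condition on $X/\mathrm{Nm}\,\mathfrak{d}$. Second, your $2^{\pi(T)}$ remark for $K=\mathbb{Q}$ is false: with $A>1$, the quantity $2^{\pi(T)}$ exceeds every power of $X$. The remark is unnecessary anyway, because your split at $X^{1/2}$ already handles $[K:\mathbb{Q}]=1$.
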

\begin{proof}
We first observe that
    \begin{align*}
        & \# \left\{ \mathfrak{m} \subset \mathcal{O}_K : \mathrm{Nm}_{\mathbb{Q}}^K(\mathfrak{m}) \leq X,  \omega(\mathfrak{m}) \leq c_2 \log \log X, \; \mathfrak{m} \text{ does not satisfy } C(0, b_2, T) \right\} \\
        &\ll \sum_{t = b_2+1}^{c_2 \log \log X} \sum_{\substack{ \mathrm{Nm}_{\mathbb{Q}}^K \mathfrak{m} \leq X \\ \mathfrak{m} = \mathfrak{n} \cdot \mathfrak{d} \\ \mathfrak{p} \mid \mathfrak{d} \implies \mathrm{Nm}_{\mathbb{Q}}^K \mathfrak{p} \leq T \\ \mathfrak{p} \mid \mathfrak{n} \implies \mathrm{Nm}_{\mathbb{Q}}^K \mathfrak{p} > T \\ \omega(\mathfrak{d}) = t}} 1 \\
        &\ll \sum_{t = b_2+1}^{c_2 \log \log X} \sum_{\substack{ \mathfrak{d} \subset \mathcal{O}_K \\ \mathfrak{p} \mid \mathfrak{d} \implies \mathrm{Nm}_{\mathbb{Q}}^K \mathfrak{p} \leq T \\ \omega(\mathfrak{d}) = t}} \# \left\{ \mathfrak{m} \subset \mathcal{O}_K: \mathrm{Nm}_\mathbb{Q}^K \mathfrak{m} \leq \frac{X}{\mathrm{Nm}_\mathbb{Q}^K \mathfrak{d}}, \mathfrak{p} \mid \mathfrak{m} \implies \mathrm{Nm}_\mathbb{Q}^K \mathfrak{p} > T \right\}
    \end{align*}
    Note that because $T = (\log X)^A$ and every $\mathfrak{d}$ appearing in the inner summand satisfies (for arbitrarily large $X$)
    \begin{equation*}
        \mathrm{Nm}_\mathbb{Q}^K \mathfrak{d} \leq T^{c_2 \log \log X} = (\log X)^{A c_2 \log \log X} \ll X^{\frac{1}{4[K: \mathbb{Q}]}},
    \end{equation*}
    we have $T \ll \left( \frac{X}{\mathrm{Nm}_\mathbb{Q}^K \mathfrak{d}} \right)^{\frac{1}{[K:\mathbb{Q}]}}$ for every $\mathfrak{d}$ appearing in the inner summand.
    By using Theorem \ref{thm:Korneel} and $T = (\log X)^A \ll X^{\frac{1}{3[K:\mathbb{Q}]}}$, we obtain:
    \begin{align*}
        &\ll_K \sum_{t=b_2+1}^{c_2 \log \log X} \sum_{\substack{\mathfrak{d} \subset \mathcal{O}_K \\ \mathfrak{p} \mid \mathfrak{d} \implies \mathrm{Nm}_{\mathbb{Q}}^K \mathfrak{p} \leq T \\ \omega(\mathfrak{d}) = t}} \frac{X}{\mathrm{Nm}_{\mathbb{Q}}^K \mathfrak{d}} \frac{1}{\log T} \\
        &\ll_K X \cdot \sum_{t = b_2+1}^{c_2 \log \log X} \frac{1}{\log T} \sum_{\substack{\mathrm{Nm}_\mathbb{Q}^K \mathfrak{d} \leq T^t \\ \omega(\mathfrak{d}) = t \\ \mathfrak{p} \mid \mathfrak{d} \implies \mathrm{Nm}_{\mathbb{Q}}^K \mathfrak{p} \leq T}} \frac{1}{\mathrm{Nm}_{\mathbb{Q}}^K \mathfrak{d}} \\
        &\ll_K X \cdot \sum_{t = b_2+1}^{c_2 \log \log X} \frac{1}{\log T} \sum_{\substack{\mathrm{Nm}_{\mathbb{Q}}^K \mathfrak{d} \leq T^t \\ \omega(\mathfrak{d}) = t}} \frac{1}{\mathrm{Nm}_{\mathbb{Q}}^K \mathfrak{d}}.
    \end{align*}
    By proof of \cite[Lemma 2.10, page 7, lines 8--10]{PS25} (in particular Lemma \ref{lemma:SatheSelberg} and partial summation), we have
    \begin{align*}
        &\ll_K X \cdot \sum_{t = b_2+1}^{c_2 \log \log X} \frac{1}{\log T} \cdot \frac{(\log \log T^t)^t}{(t-1)!} \\
        &=_K \frac{X}{\log T} \sum_{t=b_2+1}^{c_2 \log \log X} \frac{(\log \log T + \log t)^t}{(t-1)!},
    \end{align*}
    where the implied constant depends solely on the choice of the base number field $K$.
    Because we set $T$ to be some positive power of $\log X$, and $t \leq 2 \log \log X$ by condition on $\omega(\mathfrak{m})$, we can apply Lemma \ref{lemma:Norton} by picking any $\beta > 3$ to obtain
    \begin{align*}
        &\ll_K \frac{X}{\log T} \sum_{t=\epsilon \log \log X +1}^{c_2 \log \log X} \frac{(\log \log T + \log t)^t}{(t-1)!} \\
        &\ll_K \frac{X}{\log T} \sum_{t= \beta (2 \log \log \log X + O(1))}^{c_2 \log \log X} \frac{(2 \log \log \log X + O(1))^t}{(t-1)!} \\
        &\ll_K \frac{X}{\log T} \cdot \frac{1}{\sqrt{\log \log \log X}} \cdot e^{2\beta(1-\log \beta) \log \log \log X} \\
        &\ll_K \frac{X}{\log T} \cdot \frac{(\log \log X)^{2\beta(1-\log \beta)}}{\sqrt{\log \log \log X}}.
    \end{align*}
    Note that $2\beta (1 - \log \beta) < 0$ because we chose $\beta > 3$. We again use the fact that $T$ is any positive power of $\log X$ to conclude the claim.
\end{proof}

\begin{remark}
    One can in fact further improve the error term by setting $T = X^{\frac{1}{(\log X)^{\delta}}}$ for some $\delta \in (0,1)$ which is arbitrarily close to $1$. Then notice that $\log T = (\log X)^{1 - \delta}$, and $\log \log T = (1-\delta) \log \log X$. Furthermore, we also have $T^{c_2 \log \log X} = X^{\frac{c_2 \log \log X}{(\log X)^\delta}} \ll X^{\frac{1}{4[K:\mathbb{Q}]}}$, so we can still apply Theorem \ref{thm:Korneel} to prove an analogue of Lemma \ref{lemma:smalldivisors} for our choice of $T$. We may set $\epsilon = 3(1-\delta) \ll 1$, from which one can deduce that the cardinality of the integers not satisfying $C(0,b_2,T)$ is at most
    \begin{align*}
        &\ll_K \frac{X}{(\log X)^{1-\delta}} \cdot \frac{1}{\sqrt{(1-\delta) \log \log X}} \cdot (\log X)^{3(1-\log 3)(1-\delta)} \\
        &\ll_K \frac{X}{(\log X)^{(3 \log 3 - 2)(1-\delta)}}.
    \end{align*}
    This implies that the density of the set can be improved by an error term of order $O(1/(\log X)^{(3 \log 3 - 2)(1 - \delta)})$.
\end{remark}

We then show the following lemma on the density of set of integers, all prime factors of which are greater than some positive fractional power of the upper bound of the integers.
\begin{lemma} \label{lemma:largeprime}
    Let $K$ be any number field. Suppose $1 \leq k \leq C \log \log X$ for any fixed $C > 0$. Fix a positive real number $a \in (0,\frac{1}{k})$. Then as $X$ grows arbitrarily large,
    \begin{align}
    \begin{split}
        & \hspace{15pt} \frac{\# \left\{ \mathfrak{m} \subset \mathcal{O}_K : \mathrm{Nm}_\mathbb{Q}^K \mathfrak{m} \leq X, \omega(\mathfrak{m}) = k, \exists \text{ prime ideal } \mathfrak{p} \mid \mathfrak{m} \text{ s.t. } \mathrm{Nm}_{\mathbb{Q}}^K \mathfrak{p} \leq X^a \right\}}{\# \left\{ \mathfrak{m} \subset \mathcal{O}_K : \mathrm{Nm}_\mathbb{Q}^K \mathfrak{m} \leq X, \omega(\mathfrak{m}) = k \right\}} \\
        &= 1 - O_K \left( \frac{1}{1-ak} \cdot \left(\frac{-\log a}{\log \log X} \right)^{k-1} \right),
    \end{split}
    \end{align}
    where the implied constant depends only on the choice of the base field $K$.
\end{lemma}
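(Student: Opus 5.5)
The plan is to bound the complement directly. Let $\mathcal{N}_k(X)$ be the denominator, the number of ideals of norm at most $X$ with $\omega(\mathfrak{m}) = k$. Let $\mathcal{E}_k(X,a)$ be the number of those ideals all of whose prime factors have norm $> X^a$. The statement is equivalent to
\begin{equation*}
\mathcal{E}_k(X,a) \ll_K \frac{1}{1-ak}\left(\frac{-\log a}{\log\log X}\right)^{k-1}\mathcal{N}_k(X).
\end{equation*}
Only squarefree $\mathfrak{m}$ matter up to a negligible error, because non-squarefree ideals all of whose primes exceed $X^a$ are $O(X^{1-a})$. So write $\mathfrak{m} = \mathfrak{p}_1\cdots\mathfrak{p}_k$ with $X^a < \mathrm{Nm}\,\mathfrak{p}_1 \le \cdots \le \mathrm{Nm}\,\mathfrak{p}_k$.

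\emph{Upper bound for $\mathcal{E}_k(X,a)$.} Fix the $k-1$ smaller primes and set $D = \prod_{i<k}\mathrm{Nm}\,\mathfrak{p}_i$. The largest prime $\mathfrak{p}_k$ then ranges over prime ideals of norm at most $X/D$, and $X/D \ge \mathrm{Nm}\,\mathfrak{p}_k > X^a$. By the prime ideal theorem, the number of such $\mathfrak{p}_k$ is $\ll_K X/(D\log(X/D))$. Using the ordering, $D \le X^{(k-1)/k}$, which gives $\log(X/D)\ge \max(a,1/k)\log X$.

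I expect to sharpen this lower bound by partial summation over $D$, treating $\log(X/D) = \log X - \log D$ with $\log D$ ranging over $((k-1)a\log X, (1-a)\log X]$. This step should produce the factor $\frac{1}{1-ak}$, or something at least as good, which is where the hypothesis $a<1/k$ enters. Summing over unordered $(k-1)$-tuples and dividing by $(k-1)!$ for the ordering, we get
\begin{equation*}
\mathcal{E}_k(X,a) \ll_K \frac{C(a,k)\,X}{\log X}\cdot\frac{1}{(k-1)!}\Bigl(\sum_{X^a<\mathrm{Nm}\,\mathfrak{p}\le X}\frac{1}{\mathrm{Nm}\,\mathfrak{p}}\Bigr)^{k-1},
\end{equation*}
where $C(a,k)$ is the factor produced by the partial summation. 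By Lemma \ref{lemma:Merten}, the inner sum is $\log(1/a) + O_K(1/(a\log X))$. Since $k\le C\log\log X$ and $a \gg 1/\log\log X$, raising this to the power $k-1$ changes it only by a factor $1+o(1)$.

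\emph{Lower bound for $\mathcal{N}_k(X)$.} Apply Lemma \ref{lemma:SatheSelberg}:
\begin{equation*}
\mathcal{N}_k(X) = \pi_k(X,K) = \frac{X}{\log X}\frac{(\log\log X)^{k-1}}{(k-1)!}\Bigl(G(X,K)+O_K\bigl(k/(\log\log X)^2\bigr)\Bigr).
\end{equation*}
For $k\le C\log\log X$, the ratio $k/\log\log X$ lies in the compact interval $[0,C]$. On this interval the Euler product defining $G$ converges uniformly and $\Gamma$ is bounded away from $0$ and $\infty$. Hence $G(X,K)$ is bounded below by a positive constant depending only on $K$ and $C$, and the error term is $o(1)$. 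Dividing the bound for $\mathcal{E}_k(X,a)$ by this estimate cancels the $\frac{X}{\log X}\frac{1}{(k-1)!}$ factors and leaves $C(a,k)\,(\log(1/a)/\log\log X)^{k-1}$, which is the claimed error.

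The main obstacle is the bookkeeping in the first step. The crude bound $\log(X/D)\ge a\log X$ only gives a factor $1/a$, so obtaining exactly $\frac{1}{1-ak}$ depends on how the constraint $D\le X^{1-a}$ and the ordering are used. My first attempt would be partial summation over the $(k-1)$-fold product against $1/(\log X-\log D)$. If that fails to give $\frac{1}{1-ak}$, I would settle for a comparable bound depending only on $a$ and $k$; any such bound suffices for the density application in Proposition \ref{prop:IfanvsI}, because there the factor $(-\log a/\log\log X)^{k-1}$ dominates. One must also check that the uniformity in $k$ holds for all the implied constants, which is why $k\le C\log\log X$ is imposed.
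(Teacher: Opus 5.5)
Your outline matches the paper's: fix $k-1$ primes, bound the count of the last one by $\ll_K X/(D\log(X/D))$ via the prime ideal theorem, sum the $1/D$ factors with Lemma \ref{lemma:Merten}, and divide by Lemma \ref{lemma:SatheSelberg}. There is a genuine gap, and it sits exactly at the step you leave open: you never establish the factor $\frac{1}{1-ak}$.

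\textbf{Why your current bound is not enough.} The only pointwise bounds you have are $\log(X/D)\ge a\log X$ and, with $\mathfrak{p}_k$ the largest prime, $\log(X/D)\ge\frac1k\log X$. Since $a<1/k$, what your argument actually proves is a ratio bound $k\bigl(\frac{-\log a}{\log\log X}\bigr)^{k-1}$. That does not give the lemma. For $a\le 1/(2k)$ one has $\frac{1}{1-ak}\le 2$, while $k$ can be as large as $C\log\log X$.

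\textbf{Why the paper's step does not help.} No bound on $1/\log(X/D)$ that is uniform over the summation range can beat $k/\log X$. The paper gets $\frac{1}{1-ak}$ by asserting $\log X-\sum_{j<k}\log\mathrm{Nm}\,\mathfrak{p}_j\ge(1-ak)\log X$. But the constraints $\mathrm{Nm}\,\mathfrak{p}_j\ge X^a$ bound this quantity from above, not from below. For $D$ near $X^{1-a}$ it is about $a\log X$, which is smaller than $(1-ak)\log X$ whenever $a<1/(k+1)$. So following the paper will not close your gap; the partial summation you hope for is the real missing content.

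\textbf{The missing idea: weight by $\log\mathrm{Nm}\,\mathfrak{m}$.} For squarefree $\mathfrak{m}$ you have $\log\mathrm{Nm}\,\mathfrak{m}=\sum_{\mathfrak{p}\mid\mathfrak{m}}\log\mathrm{Nm}\,\mathfrak{p}$, and this removes the denominator $\log(X/D)$ entirely. Write each squarefree $\mathfrak{m}\in\mathcal{E}=\mathcal{E}_k(X,a)$ as $\mathfrak{p}\mathfrak{n}$ in all $k$ ways, and use the Chebyshev bound $\sum_{\mathrm{Nm}\,\mathfrak{p}\le Y}\log\mathrm{Nm}\,\mathfrak{p}\ll_K Y$. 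This gives
\[
\sum_{\mathfrak{m}\in\mathcal{E}}\log\mathrm{Nm}\,\mathfrak{m}
=\sum_{\mathfrak{n}}\ \sum_{\substack{\mathfrak{p}\nmid\mathfrak{n}\\ X^a<\mathrm{Nm}\,\mathfrak{p}\le X/\mathrm{Nm}\,\mathfrak{n}}}\log\mathrm{Nm}\,\mathfrak{p}
\ll_K X\sum_{\mathfrak{n}}\frac{1}{\mathrm{Nm}\,\mathfrak{n}}
\le\frac{X}{(k-1)!}\,S^{k-1},
\qquad S=\sum_{X^a<\mathrm{Nm}\,\mathfrak{p}\le X}\frac{1}{\mathrm{Nm}\,\mathfrak{p}},
\]
where $\mathfrak{n}$ runs over squarefree ideals with $k-1$ prime factors of norm in $(X^a,X]$.

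Ideals with $\mathrm{Nm}\,\mathfrak{m}>X^{1/2}$ have weight at least $\frac12\log X$, and there are only $O_K(X^{1/2})$ others. Hence
\[
\#\mathcal{E}\ll_K X^{1/2}+\frac{X}{\log X}\cdot\frac{S^{k-1}}{(k-1)!}.
\]
Combined with your Sathe--Selberg lower bound, this gives a ratio $\ll_{K,C}\bigl(\frac{-\log a}{\log\log X}\bigr)^{k-1}$. That is stronger than the lemma, since $\frac{1}{1-ak}\ge 1$. In your partial-summation picture, this is the identity $\prod_{i=1}^k u_i^{-1}=\sum_j\prod_{i\ne j}u_i^{-1}$ on $\sum_i u_i=1$, which is what removes the factor $k$.

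\textbf{Two smaller corrections.}

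First, $a\gg 1/\log\log X$ is not a hypothesis of the lemma, but you do not need it. If $X^a<2$ the claimed bound exceeds $1$ and there is nothing to prove. If $X^a\ge 2$, then $S=\log(1/a)+O_K(1/(a\log X))$ and $k/(a\log X\log(1/a))=O(C)$ uniformly. So $S^{k-1}\ll_{K,C}(\log(1/a))^{k-1}$: you get a bounded factor rather than $1+o(1)$.

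Second, your bound $O(X^{1-a})$ for non-squarefree ideals is not negligible when $X^a$ is a small power of $\log X$. For $k=2$ and $X^a=(\log X)^{1/2}$ it exceeds the whole denominator. Either restrict to squarefree $\mathfrak{m}$, which is all the paper's proof and Proposition \ref{prop:IfanvsI} use, or run the weighting argument over prime powers.
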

\begin{proof}
We have:
    \begin{align*}
        & \# \left\{ \mathfrak{m} \subset \mathcal{O}_K : \mathrm{Nm}_\mathbb{Q}^K \mathfrak{m} \leq X, \omega(\mathfrak{m}) = k, \mathfrak{p} \mid \mathfrak{m} \implies \mathrm{Nm}_{\mathbb{Q}}^K \mathfrak{p} \geq X^a \right\} \\
        &\ll_K \frac{1}{(k-1)!} \sum_{\substack{\mathfrak{p}_1, \mathfrak{p}_2, \cdots, \mathfrak{p}_{k-1} \\ X^a \leq \mathrm{Nm}_{\mathbb{Q}}^K \mathfrak{p}_1, \cdots, \mathrm{Nm}_{\mathbb{Q}}^K \mathfrak{p}_{k-1} \leq X}} \# \left\{ \mathrm{Nm}_{\mathbb{Q}}^K \mathfrak{p}_k \leq \frac{X}{\mathrm{Nm}_{\mathbb{Q}}^K \mathfrak{p}_1 \cdots \mathrm{Nm}_\mathbb{Q}^K \mathfrak{p}_{k-1}}: \mathfrak{p}_k \text{ is a prime ideal} \right\} \\
        &\ll_K \frac{1}{(k-1)!} \sum_{\substack{\mathfrak{p}_1, \mathfrak{p}_2, \cdots, \mathfrak{p}_{k-1} \\ X^a \leq \mathrm{Nm}_{\mathbb{Q}}^K \mathfrak{p}_1, \cdots, \mathrm{Nm}_{\mathbb{Q}}^K \mathfrak{p}_{k-1} \leq X}} \frac{X}{\prod_{j=1}^{k-1} \mathrm{Nm}_{\mathbb{Q}}^K \mathfrak{p}_j} \cdot \frac{1}{\log X - \log \mathrm{Nm}_\mathbb{Q}^K \mathfrak{p}_1 - \cdots - \log \mathrm{Nm}_\mathbb{Q}^K \mathfrak{p}_{k-1}} \\
        &\ll_K \frac{1}{(k-1)!} \sum_{\substack{\mathfrak{p}_1, \mathfrak{p}_2, \cdots, \mathfrak{p}_{k-1} \\ X^a \leq \mathrm{Nm}_{\mathbb{Q}}^K \mathfrak{p}_1, \cdots, \mathrm{Nm}_{\mathbb{Q}}^K \mathfrak{p}_{k-1} \leq X}} \frac{X}{\prod_{j=1}^{k-1} \mathrm{Nm}_{\mathbb{Q}}^K \mathfrak{p}_j} \cdot \frac{1}{(1-ak)\log X} \\
        &\ll_K \frac{X}{(1-ak)\log X} \frac{1}{(k-1)!} \cdot \prod_{j=1}^{k-1} \left( \sum_{X^a \leq \mathrm{Nm}_\mathbb{Q}^K \mathfrak{p}_j \leq X} \frac{1}{\mathrm{Nm}_\mathbb{Q}^K \mathfrak{p}_j} \right).
    \end{align*}
    By Lemma \ref{lemma:Merten}, we have
    \begin{align*}
        &\ll_K \frac{X}{(1-ak)\log X} \frac{1}{(k-1)!} \cdot (- \log a + O(1/\log X))^{k-1}.
    \end{align*}
    We compare the above asymptotic counting function with Lemma \ref{lemma:SatheSelberg} to obtain the desired statement.
\end{proof}

We now prove Proposition \ref{prop:IfanvsI}. 
\begin{proof}
We recall the definition of $I_K^{Fan}(X)$. For any ideal $\mathfrak{m} \in I_K^{Fan}(X)$ we have the prime ideal factorization
\begin{equation*}
    \mathfrak{d} = \prod_{i=1}^{m_s} \mathfrak{q}_{i} \prod_{j=1}^{m_l} \mathfrak{p}_j,
\end{equation*}
where $m_s + m_l = m$, is in $\mathcal{D}_{m,k,X}$, and the prime divisors $\mathfrak{q}_i$ and $\mathfrak{p}_j$ dividing $\mathfrak{d}$ satisfy:
\begin{small}
\begin{align} 
    2 \leq \; & \mathrm{Nm}_{\mathbb{Q}}^K \mathfrak{q}_i \; \leq T := (\log X)^{A} \text{ for all } 1 \leq i \leq m_s, \; A > 1 \notag \\
    m_s &\leq \frac{1}{\log \log \log X} \log \log X \notag\\
    & \; \mathrm{Nm}_{\mathbb{Q}}^K\mathfrak{p}_j > T \text{ for all } 1 \leq j \leq m_l \notag \\
    T < \; &\mathrm{Nm}_\mathbb{Q}^K \mathfrak{p}_1 \leq X^{\frac{1}{2^{m_l}}} \eqcolon M_1 \notag, \\
    \mathrm{Nm}_\mathbb{Q}^K \mathfrak{p}_1 < \; & \mathrm{Nm}_\mathbb{Q}^K \mathfrak{p}_2 \leq X^{\frac{1}{2^{m_l-1}}} \eqcolon M_2 \notag ,\\
    \mathrm{Nm}_\mathbb{Q}^K\mathfrak{p}_2 < \; & \mathrm{Nm}_\mathbb{Q}^K\mathfrak{p}_3 \leq X^{\frac{1}{2^{m_l-2}}} \eqcolon M_3 \notag, \\
    &\vdots\\
    \mathrm{Nm}_\mathbb{Q}^K\mathfrak{p}_{m_l-2} < \; & \mathrm{Nm}_\mathbb{Q}^K\mathfrak{p}_{m_l-1} \leq X^{\frac{1}{4}} \eqcolon M_{m_l-1} \notag, \\
    \mathrm{Nm}_\mathbb{Q}^K\mathfrak{p}_{m_l-1} < \; & \mathrm{Nm}_\mathbb{Q}^K\mathfrak{p}_{m_l} \leq \frac{X}{\prod_{i=1}^{m_s} \mathrm{Nm}_\mathbb{Q}^K \mathfrak{q}_i \cdot \prod_{j=1}^{m_l-1} \mathrm{Nm}_\mathbb{Q}^K \mathfrak{p}_j}, \notag \\
    c_1 \log \log X \leq \; &m_l \leq c_2 \log \log X. \notag
\end{align}
\end{small}
where $\sum_{q_i \mid \mathfrak{d}} w(\mathfrak{q}_i) + \sum_{\mathfrak{p}_j \mid \mathfrak{d}}w(\mathfrak{p}_j)=k$, $c_1 \in (0,1),$ and $c_2 \in (1, \frac{1}{\log 2})$.

We first use Lemma \ref{lemma:smalldivisors} to show that for any $b > 0$, all but $O(X/(\log \log X)^b)$ of integers at most $X$ satisfy the following two conditions:
\begin{align*}
    2 \leq & \mathrm{Nm}_\mathbb{Q}^K \mathfrak{q}_i \leq T := (\log X)^{A} \notag \\
    m_s &\leq \frac{1}{\log \log \log X} \log \log X \notag
\end{align*}
We use the abbreviation $\epsilon := \frac{1}{\log \log \log X}$. For each choice of $\mathfrak{S} \subset \mathcal{O}_K$ such that $\mathrm{Nm}_\mathbb{Q}^K \mathfrak{S} \in [1, (\log X)^{A \epsilon \log \log X}]$, consider the decomposition of the set of integers as follows.
\begin{align*}
    & \left\{ \mathfrak{m} \subset \mathcal{O}_K : \mathrm{Nm}_\mathbb{Q}^K \mathfrak{m} \leq X, \; c_1 \log \log X \leq \omega(\mathfrak{m}) \leq \left( c_2 + \epsilon \right) \log \log X \right\} \\
    &= \bigsqcup_{m_l=c_1 \log \log X}^{c_2 \log \log X} \bigsqcup_{m_s = 0}^{\epsilon \log \log X} \left\{ \mathfrak{m} \subset \mathcal{O}_K : \mathrm{Nm}_\mathbb{Q}^K \mathfrak{m} \leq X, \omega (\mathfrak{m}/\mathfrak{S}) = m_l, \omega(\mathfrak{S}) = m_s \right\}
\end{align*}
For each $\mathfrak{S}$, we use Lemma \ref{lemma:largeprime} and the fact that $m_l < c_2 \log \log X$ for $c_2 \in (1, 1/\log 2)$ to the set $$\left\{ \mathfrak{m} \subset \mathcal{O}_K : \mathrm{Nm}_\mathbb{Q}^K \mathfrak{m} \leq X, \omega (\mathfrak{m}/\mathfrak{S}) = m_l, \omega(\mathfrak{S}) = m_s \right\}$$ to show that all but a density $O((\log X)^{\log(c_2 \log 2)})$ subset satisfies
\begin{align*}
    T < \mathrm{Nm}_\mathbb{Q}^K \mathfrak{p}_1 < X^{\frac{1}{2^{m_l}}},
\end{align*}
where $\mathfrak{p}_1$ is the prime ideal factor of smallest norm of $\mathfrak{m}/\mathfrak{S}$. We then use Lemma \ref{lemma:largeprime} inductively on $\mathfrak{p}_j$'s for $1 \leq j \leq k-1$ to show that given a fixed choice of $S$ and fixed choices of $\mathfrak{p}_1, \mathfrak{p}_2, \cdots, \mathfrak{p}_{j-1}$, all but a density at most $O(1/\log \log X)$ subset satisfies
\begin{align*}
    \mathrm{Nm}_\mathbb{Q}^K \mathfrak{p}_{j-1} < \mathrm{Nm}_\mathbb{Q}^K \mathfrak{p}_j < X^{\frac{1}{2^{m_l-j+1}}}
\end{align*}
where $\mathfrak{p}_j$ is the smallest prime factor of $\mathfrak{m}/(\mathfrak{S} \cdot \mathfrak{p}_1 \cdot \mathfrak{p}_2 \cdots \mathfrak{p}_{j-1})$. We note that $\mathrm{Nm}_\mathbb{Q}^K \mathfrak{p}_{j-1} < \mathrm{Nm}_\mathbb{Q}^K \mathfrak{p}_j$ by the induction hypothesis and the fact that all the prime ideal factors of $\mathfrak{m}/\mathfrak{S}$ have norms of at least $(\log X)^A$: namely, that in the previous induction step we chose $\mathfrak{p}_{j-1}$ to correspond to the prime ideal factor of smallest norm of $m/(\mathfrak{S} \cdot \mathfrak{p}_1 \cdots \mathfrak{p}_{j-2})$. Combining all the induction steps together, we obtain that all but $O(X/\log \log X)$ of ideals of norm at most $X$ satisfy the desired conditions on the prime ideal divisors.
\end{proof}

Provided below are two additional fan structures constructed in a similar manner to $\mathcal{D}_{m,k,X}$, whose utility will be elucidated in later sections.

\begin{remark} \label{remark:better_fan}
    One can choose other values of $T$ to construct a relaxed version of the new fan structure $\mathcal{D}_{m,k,X}$ which encompasses all but a density $O \left((\log X)^{\frac{c_2 \log 2 + 1}{2} - 1} + (\log X)^{\epsilon' \log \log 2 + \delta} \right)$ subset of $I_K^{\square-free}(X)$ for some small enough $\delta > 0$. 
    
    We let $\epsilon' \in (0, c_1/2)$ be an auxiliary parameter which will be determined later. Suppose only the norms of the prime ideal factors $\mathfrak{q}_i$ for $1 \leq i \leq m_s$ and $\mathfrak{p}_j$ for $1 \leq k \leq m_l - \epsilon' \log \log X$ of $\mathfrak{d} \in \mathcal{D}_{m,k,X}$ satisfy the analogous bounds as stated in \eqref{eq:ineqK}. In particular, we relax the definition of $\mathcal{D}_{m,k,X}$ such that any $\mathfrak{d} \in \mathcal{D}_{m,k,X}$ satisfies
    \begin{small}
    \begin{align*} 
    2 \leq \; & \mathrm{Nm}_{\mathbb{Q}}^K \mathfrak{q}_i \; \leq T := X^{\frac{1}{(\log X)^{\frac{c_2 \log 2 + 1}{2}}}} \text{ for all } 1 \leq i \leq m_s \notag \\
    m_s &\leq \frac{1}{\log \log \log X} \log \log X \notag\\
    & \; \mathrm{Nm}_{\mathbb{Q}}^K\mathfrak{p}_j > T \text{ for all } 1 \leq j \leq m_l \notag \\
    T < \; &\mathrm{Nm}_\mathbb{Q}^K \mathfrak{p}_1 \leq X^{\frac{1}{2^{m_l}}} \eqcolon M_1 \notag, \\
    \mathrm{Nm}_\mathbb{Q}^K \mathfrak{p}_1 < \; & \mathrm{Nm}_\mathbb{Q}^K \mathfrak{p}_2 \leq X^{\frac{1}{2^{m_l-1}}} \eqcolon M_2 \notag ,\\
    &\vdots\\
    \mathrm{Nm}_\mathbb{Q}^K\mathfrak{p}_{m_l- \lfloor \epsilon' \log \log X \rfloor - 2} < \; & \mathrm{Nm}_\mathbb{Q}^K\mathfrak{p}_{m_l- \lfloor \epsilon' \log \log X \rfloor - 1} \leq X^{\frac{1}{2^{\lfloor \epsilon' \log \log X \rfloor}}} \eqcolon M_{m_l- \lfloor \epsilon' \log \log X \rfloor - 1} \notag, \\
    \mathrm{Nm}_\mathbb{Q}^K\mathfrak{p}_{m_l- \lfloor \epsilon' \log \log X \rfloor - 1} < \; & \mathrm{Nm}_\mathbb{Q}^K\mathfrak{p}_{m_l- \lfloor \epsilon' \log \log X \rfloor } \notag \\
    &\vdots \\
     \mathrm{Nm}_\mathbb{Q}^K\mathfrak{p}_{m_l-2} < \; & \mathrm{Nm}_\mathbb{Q}^K\mathfrak{p}_{m_l-1} \notag \\
    \mathrm{Nm}_\mathbb{Q}^K\mathfrak{p}_{m_l-1} < \; & \mathrm{Nm}_\mathbb{Q}^K\mathfrak{p}_{m_l} \leq \frac{X}{\prod_{i=1}^{m_s} \mathrm{Nm}_\mathbb{Q}^K \mathfrak{q}_i \cdot \prod_{j=1}^{m_l-1} \mathrm{Nm}_\mathbb{Q}^K \mathfrak{p}_j}, \notag \\
    c_1 \log \log X \leq \; &m_l \leq c_2 \log \log X. \notag
    \end{align*}
    \end{small}
    Then all but a density $O \left((\log X)^{\frac{c_2 \log 2 + 1}{2} - 1} + (\log X)^{\epsilon' \log \log 2 + \delta} \right)$ subset of $I_K^{\square-free}(X)$ for some small enough $\delta > 0$ lies in $\mathcal{D}_{m,k,X}$ for some $m$ and $k$. The exponent $\frac{c_2 \log 2 + 1}{2} - 1$ originates from applying Lemma \ref{lemma:smalldivisors} with the new choice of $T$, and the exponent $\epsilon' \log \log 2$ originates from using Lemma \ref{lemma:largeprime} up to $m_l - \lfloor \epsilon' \log \log X \rfloor$ many times. 
\end{remark}

\begin{remark} \label{remark:parametrize_fan}
    One can choose other values of $M_1, M_2, \cdots, M_{m_l-1}$ to construct a collection of new fan structures $\{\mathcal{D}_{m,k,X,a}\}_{a}$ indexed by a choice of a positive real number $a \in (\frac{1}{(\log \log X)^A}, 1/2]$ given any positive integer $A \geq 1$.

    As before, consider the prime ideal factorization
\begin{equation*}
    \mathfrak{d} = \prod_{i=1}^{m_s} \mathfrak{q}_{i} \prod_{j=1}^{m_l} \mathfrak{p}_j,
\end{equation*}
where $m_s + m_l = m$. We say that $\mathfrak{d} \in \mathcal{D}_{m,k,X,a}$ if the prime divisors $\mathfrak{q}_i$ and $\mathfrak{p}_j$ dividing $\mathfrak{d}$ satisfy:
\begin{small}
\begin{align} 
    2 \leq \; & \mathrm{Nm}_{\mathbb{Q}}^K \mathfrak{q}_i \; \leq T := (\log X)^{A} \text{ for all } 1 \leq i \leq m_s, \; A > 1 \notag \\
    m_s &\leq \frac{1}{\log \log \log X} \log \log X \notag\\
    & \; \mathrm{Nm}_{\mathbb{Q}}^K\mathfrak{p}_j > T \text{ for all } 1 \leq j \leq m_l \notag \\
    T < \; &\mathrm{Nm}_\mathbb{Q}^K \mathfrak{p}_1 \leq X^{a \cdot \frac{1}{2^{m_l-1}}} \eqcolon M_1(a) \notag, \\
    \mathrm{Nm}_\mathbb{Q}^K \mathfrak{p}_1 < \; & \mathrm{Nm}_\mathbb{Q}^K \mathfrak{p}_2 \leq X^{a \cdot \frac{1}{2^{m_l-2}}} \eqcolon M_2(a) \notag ,\\
    \mathrm{Nm}_\mathbb{Q}^K\mathfrak{p}_2 < \; & \mathrm{Nm}_\mathbb{Q}^K\mathfrak{p}_3 \leq X^{a \cdot \frac{1}{2^{m_l-3}}} \eqcolon M_3(a) \notag, \\
    &\vdots\\
    \mathrm{Nm}_\mathbb{Q}^K\mathfrak{p}_{m_l-2} < \; & \mathrm{Nm}_\mathbb{Q}^K\mathfrak{p}_{m_l-1} \leq X^{a \cdot \frac{1}{2}} \eqcolon M_{m_l-1}(a) \notag, \\
    \mathrm{Nm}_\mathbb{Q}^K\mathfrak{p}_{m_l-1} < \; & \mathrm{Nm}_\mathbb{Q}^K\mathfrak{p}_{m_l} \leq \frac{X}{\prod_{i=1}^{m_s} \mathrm{Nm}_\mathbb{Q}^K \mathfrak{q}_i \cdot \prod_{j=1}^{m_l-1} \mathrm{Nm}_\mathbb{Q}^K \mathfrak{p}_j}, \notag \\
    c_1 \log \log X \leq \; &m_l \leq c_2 \log \log X. \notag
\end{align}
\end{small}
where $\sum_{q_i \mid \mathfrak{d}} w(\mathfrak{q}_i) + \sum_{\mathfrak{p}_j \mid \mathfrak{d}}w(\mathfrak{p}_j)=k$, $c_1 \in (0,1),$ and $c_2 \in (1, \frac{1}{\log 2})$.

By adapting the proof of Proposition \ref{prop:IfanvsI} line by line, it follows that given any real positive number $a \in (1/(\log\log X)^A, 1/2]$ for some positive $A \geq 1$, all but a density $O(-\log a / \log \log X)$ subset of $I_K^{\square-free}(X)$ lies in $\mathcal{D}_{m,k,X,a}$. We note that the requirement that $a \geq 1/(\log \log X)^A$ ensures that the error term $-\log a / \log \log X$ does not become trivial. In addition, the case where $a = 1/2$ recovers our original construction of the fan structure $\mathcal{D}_{m,k,X}$.

By considering disjoint collections of subset of fan structures $\mathcal{D}_{m,k,X,a}$, we can impose additional conditions on the prime ideal whose norm is the largest among all prime ideal factors. Let $\overline{\mathcal{D}}_{m,k,X,a}$ be a subset of the fan structure $\mathcal{D}_{m,k,X,a}$ given by requiring the following condition on $\mathrm{Nm}_\mathbb{Q}^K\mathfrak{p}_{m_l}$:
\begin{equation}
    \frac{X^{1 - 2a(1 - 2^{-m_l})}}{\prod_{i=1}^{m_s} \mathrm{Nm}_\mathbb{Q}^K \mathfrak{q}_i}< \mathrm{Nm}_\mathbb{Q}^K\mathfrak{p}_{m_l} \leq \frac{X^{1 - a(1-2^{-m_l})}}{\prod_{i=1}^{m_s} \mathrm{Nm}_\mathbb{Q}^K \mathfrak{q}_i}.
\end{equation}
We now choose $a \in \left\{ 2^{-i} \right\}_{i=1}^{\lfloor \log \log \log X \rfloor}$. Then for any positive integers $i \neq j$, we have $$\overline{\mathcal{D}}_{m,k,X,2^{-i}} \cap \overline{\mathcal{D}}_{m,k,X,2^{-j}} = \emptyset.$$ By applying Lemma \ref{lemma:Merten}, which allows us to compute density of ideals of $\mathcal{O}_K$ of ideal norm at most $X$ whose prime ideal factor of largest norm is at most $X^a$, we can conclude that the disjoint union 
\begin{equation}
    I_K^{Fan}(X) := \bigsqcup_{m = \lceil c_1 \log \log X \rceil}^{\lfloor \left( c_2 + \frac{1}{\log \log \log X} \right) \log \log X \rfloor} \left( \bigsqcup_{k=0}^{2m} \bigsqcup_{i=1}^{\lfloor \log \log \log X \rfloor} \overline{\mathcal{D}}_{m,k,X,2^{-i}} \right).
\end{equation}
contains all but a density $O(\log \log \log X / \log \log X)$ subset of ideals in $I_K^{\square-free}(X)$.

Although the motivation for constructing such a fan seems unmotivated at the moment, we will use the disjoint union of fans $\bigsqcup_{k=1}^{\lfloor \log \log \log X \rfloor} \overline{\mathcal{D}}_{m,k,X,2^{-k}}$ to compute the distribution of 2-Selmer groups of quadratic twist families of Jacobians of hyperelliptic curves, which may not be feasibly obtained from using the original fan structure appearing in Proposition \ref{prop:IfanvsI}.
\end{remark}

One of the other tools from probability theory we will use is the geometric ergodicity theorem for irreducible and aperiodic Markov operators over countable state spaces. Provided below is an adaptation of \cite[Theorem 15.0.1]{MT93} and \cite[Theorem 6.6]{park2022prime} to weighted sums of Markov operators $\mathbf{M}_L$.
\begin{theorem}[Theorem 15.0.1 of \cite{MT93}, Theorem 6.6 of \cite{park2022prime}] \label{thm:geometric-ergodicity}
    Given a probability distribution $\mu: \mathbb{Z}_{\geq 0} \to [0,1]$ such that there exists some $N > 0$ such that $\mu(n) = 0$ for all $n \geq N$, we denote by $\mathbb{E}[p^\mu] := \sum_{n=0}^\infty \mu(n) \cdot p^n$. We also denote by $\mathbf{Id}$ the identity Markov operator.
    \begin{enumerate}
        \item Suppose $k$ is an odd integer. Then there exist explicitly computable constant $B > 0$ and $\gamma \in (0,1)$ such that
        \begin{equation*}
            \sup_{n \in \mathbb{Z}_{\geq 0}} \left| \mathbf{M}_L^{k}\mu(n) - \left(\rho(\mu)E^+ + (1-\rho(\mu))E^-\right)(n) \right| < B \cdot \mathbb{E}[p^\mu] \cdot \gamma^k.
        \end{equation*}
        \item Suppose $k$ is an even integer. Then there exists explicitly computable constant $B > 0$ and $\gamma \in (0,1)$ such that
        \begin{equation*}
            \sup_{n \in \mathbb{Z}_{\geq 0}} \left| \mathbf{M}_L^{k}\mu(n) - \left((1-\rho(\mu))E^+ + \rho(\mu)E^-\right)(n) \right| < B \cdot \mathbb{E}[p^\mu] \cdot \gamma^k.
        \end{equation*}
        \item Suppose $\alpha_0, \alpha_1, \alpha_2$ are real numbers such that $\alpha_1 \neq 0$, one of $\alpha_0$ or $\alpha_2 \neq 0$, and $\alpha_0 + \alpha_1 + \alpha_2 = 1$. Then there exists explicitly computable constant $B > 0$ and $\gamma \in (0,1)$ such that
        \begin{equation*}
            \sup_{n \in \mathbb{Z}_{\geq 0}} \left| (\alpha_0 \cdot \mathbf{Id} + \alpha_1 \cdot \mathbf{M}_L + \alpha_2 \cdot \mathbf{M}_L^2)^k\mu(n) - \left(\frac{1}{2}E^+ + \frac{1}{2}E^-\right)(n) \right| < B \cdot \mathbb{E}[p^\mu] \cdot \gamma^k.
        \end{equation*}
        \item Suppose $\alpha_0, \alpha_2$ are real numbers such that $\alpha_2 \neq 0$ and $\alpha_0 + \alpha_2 = 1$. Then there exists explicitly computable constant $B > 0$ and $\gamma \in (0,1)$ such that
        \begin{equation*}
            \sup_{n \in \mathbb{Z}_{\geq 0}} \left| (\alpha_0 \cdot \mathbf{Id} + \alpha_2 \cdot \mathbf{M}_L^2)^k\mu(n) - \left(\rho(\mu)E^+ + (1-\rho(\mu))E^-\right)(n) \right| < B \cdot \mathbb{E}[p^\mu] \cdot \gamma^k.
        \end{equation*}
    \end{enumerate}
\end{theorem}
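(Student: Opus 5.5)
The plan is to derive all four statements from the $V$-uniform ergodicity theorem \cite[Theorem 15.0.1]{MT93}, using the Lyapunov function $V(n) = p^n$. The operator $\mathbf{M}_L$ is a birth--death chain on $\mathbb{Z}_{\geq 0}$ that changes parity at every step, so it has period $2$. Its square $\mathbf{M}_L^2$ therefore preserves the two classes $2\mathbb{Z}_{\geq 0}$ and $2\mathbb{Z}_{\geq 0}+1$. On each class $\mathbf{M}_L^2$ is irreducible, and it is aperiodic because
\[
\mathbf{M}_L^2(r,r) = (1-p^{-r})p^{-(r-1)} + p^{-r}(1-p^{-(r+1)}) > 0 .
\]
By the result quoted in Definition \ref{def:Markov}, the stationary distributions of $\mathbf{M}_L^2$ on the two classes are $E^+$ and $E^-$. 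Consequently $\mathbf{M}_L E^+ = E^-$ and $\mathbf{M}_L E^- = E^+$, and $\tfrac12(E^+ + E^-)$ is $\mathbf{M}_L$-invariant.

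\textbf{Drift condition.} A direct computation gives
\[
(\mathbf{M}_L V)(r) = (1-p^{-r})p^{r-1} + p^{-r}p^{r+1} = p^{-1}V(r) + p - p^{-1}.
\]
Hence $\mathbf{M}_L V \leq p^{-1} V + p$, and iterating, $\mathbf{M}_L^2 V \leq p^{-2} V + p^{-1}p + p$. So the drift inequality $PV \leq \lambda V + b\,\mathbf{1}_C$ holds with $\lambda<1$ and $C$ a finite set, for each of the chains we consider. For an irreducible chain on a countable space, finite sets are petite. Theorem 15.0.1 of \cite{MT93} then yields constants $R>0$ and $\gamma\in(0,1)$ such that
\[
\|P^j(x,\cdot) - \pi\|_V \leq R\, V(x)\,\gamma^j .
\]
Averaging this over the initial distribution $\mu$ gives a bound of the form $R\,\mathbb{E}[p^\mu]\gamma^j$, which dominates the supremum over $n$. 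The finite-support hypothesis on $\mu$ guarantees that $\mathbb{E}[p^\mu]$ is finite.

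\textbf{Parts (2) and (1).} For part (2), with $k=2j$, apply the bound above to $P=\mathbf{M}_L^2$ separately on each parity class, starting from the restriction of $\mu$ to that class. The even part has mass $1-\rho(\mu)$ and converges to $E^+$; the odd part has mass $\rho(\mu)$ and converges to $E^-$. Adding the two estimates gives part (2) with rate $\gamma^{j}$, which after renaming $\gamma$ is a rate $\gamma^k$. For part (1), with $k=2j+1$, write $\mathbf{M}_L^{k}\mu = \mathbf{M}_L^{2j}(\mathbf{M}_L\mu)$. One step flips parity, so $\rho(\mathbf{M}_L\mu) = 1-\rho(\mu)$, and by the drift computation $\mathbb{E}[p^{\mathbf{M}_L\mu}] \leq (p+1)\,\mathbb{E}[p^\mu]$. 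Applying part (2) to $\mathbf{M}_L\mu$ and absorbing the factor $p+1$ into $B$ gives part (1).

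\textbf{Parts (3) and (4).} Throughout I take the $\alpha_i$ to be nonnegative, so that the combination is a Markov operator; this hypothesis is implicit in the statement. For part (3), let $P = \alpha_0\mathbf{Id} + \alpha_1\mathbf{M}_L + \alpha_2\mathbf{M}_L^2$.
\begin{itemize}
\item Since $\alpha_1 \neq 0$, $P$ contains the nearest-neighbour moves of $\mathbf{M}_L$, so it is irreducible on all of $\mathbb{Z}_{\geq 0}$.
\item Since $\alpha_0>0$ or $\alpha_2>0$, $P$ has $P(r,r)>0$, using the formula for $\mathbf{M}_L^2(r,r)$ above; hence $P$ is aperiodic.
\item The measure $\tfrac12(E^++E^-)$ is $P$-invariant, because it is invariant under $\mathbf{Id}$, $\mathbf{M}_L$ and $\mathbf{M}_L^2$. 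By irreducibility it is the unique stationary distribution.
\item The drift inequality holds with $\lambda = \alpha_0 + \alpha_1 p^{-1} + \alpha_2 p^{-2} < 1$; this is strict because $\alpha_1>0$.
\end{itemize}
Theorem 15.0.1 of \cite{MT93} then gives part (3). For part (4), the operator $\alpha_0\mathbf{Id}+\alpha_2\mathbf{M}_L^2$ preserves parity. On each parity class it is irreducible and aperiodic with stationary distribution $E^\pm$, and it satisfies the drift inequality with $\lambda = \alpha_0+\alpha_2p^{-2}<1$. The argument for part (2) then goes through verbatim.

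\textbf{Main obstacle.} I expect the main difficulty to be bookkeeping rather than conceptual: verifying the hypotheses of \cite[Theorem 15.0.1]{MT93} cleanly (petiteness of the finite set $C$, and $\psi$-irreducibility on each parity class), and making the constants $B$ and $\gamma$ explicit and uniform. To do this I would follow the explicit treatment of the same chain in \cite[Theorem 6.6]{park2022prime}.
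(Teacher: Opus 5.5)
Your route is the one the paper intends. The paper gives no argument beyond citing \cite[Theorem 15.0.1]{MT93} and \cite[Theorem 6.6]{park2022prime}, and the factor $\mathbb{E}[p^\mu]$ points to exactly the drift function $V(n)=p^n$ you use. Your verifications for parts (1)--(3) are correct: the identity $\mathbf{M}_L V = p^{-1}V + p - p^{-1}$, positivity of the diagonal of $\mathbf{M}_L^2$, $\mathbf{M}_L E^{\pm}=E^{\mp}$, and the reduction of (1) to (2) via $\rho(\mathbf{M}_L\mu)=1-\rho(\mu)$. Your restriction to $\alpha_i\ge 0$ is genuinely necessary, not cosmetic. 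For $(\alpha_0,\alpha_1,\alpha_2)=(-1,1,1)$ one gets $P(E^+-E^-)=-(E^+-E^-)$. Applying the claimed bound to normalized truncations of $E^\pm$, which have uniformly bounded $\mathbb{E}[p^\mu]$, then gives a contradiction. Note also that Theorem 15.0.1 of \cite{MT93} only asserts that $R$ and $\gamma$ exist. ``Explicitly computable'' needs a quantitative drift theorem, which is available here because every state is an atom; you correctly flag this as remaining bookkeeping.

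The step that fails to deliver the statement is your last sentence on part (4). Running the part-(2) argument for $P=\alpha_0\mathbf{Id}+\alpha_2\mathbf{M}_L^2$ gives the limit $(1-\rho(\mu))E^+ + \rho(\mu)E^-$. This is because $P$ never moves mass between the parity classes: the even part of $\mu$, of mass $1-\rho(\mu)$, tends to $E^+$, and the odd part tends to $E^-$. Part (4) as printed asserts the limit $\rho(\mu)E^+ + (1-\rho(\mu))E^-$, which is false whenever $\rho(\mu)\neq \tfrac12$. Take $\mu$ to be the point mass at $0$. Every iterate $P^k\mu$ is supported on the even integers, while the asserted limit is $E^-$. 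So the discrepancy at $n=1$ equals $E^-(1)>0$ for every $k$, whereas the claimed bound $B\gamma^k$ tends to $0$. Hence (4) cannot be proved as written. You should say explicitly that its weights are swapped (they seem to have been copied from (1)) and that what you prove is the corrected form. That corrected form is also the one the paper actually uses in Case A.2 of the proof of Theorem \ref{thm:fanB}.
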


\section{Distribution of Selmer structures: proof of Theorem \ref{thm:intro}}\label{sec:generalthm}

Combining all the observations from previous sections, we prove the following result on the distribution of dimensions of twisted Selmer structures.
\begin{theorem} \label{thm:fanB} 
Assume ERH. Fix a prime $p$, a number field $K$, and a fan structure $\mathcal{B}_K(X)$ as above.
\begin{itemize}
    \item Suppose $p = 2$. Recall the probability distributions $\mathrm{E}^+_1(n)$ and $\mathrm{E}^-_1(n)$ from Theorem \ref{KMR14:main}, where we fix $\Sigma = \Sigma_T$. Denote by $\delta(T/K) := \frac{1}{2}(\rho(\mathrm{E}^-_1)-\rho(\mathrm{E}^+_1))$. Then
    \begin{small}
    \begin{align}
        \lim_{X \to \infty} \frac{\#\left\{ \chi \in \mathcal{B}_K(X) : \dim_{\mathbb{F}_p} \Sel(T, \chi : \alpha) = n \right\}}{\# \mathcal{B}_K(X)} &= \left(\frac{1}{2} + \delta(T/K) \right) E^+(n) + \left(\frac{1}{2} - \delta(T/K) \right) E^-(n)   \label{thm:main-numberfld-B-2} \\
        \text{and} \notag \\
        \lim_{X \to \infty} \frac{\#\left\{ \chi \in \mathcal{C}_K(X) : \dim_{\mathbb{F}_p} \Sel(T, \chi : \alpha) = n \right\}}{\# \mathcal{C}_K(X)} &= \left(\frac{1}{2} + \delta(T/K) \right) E^+(n) + \left(\frac{1}{2} - \delta(T/K) \right) E^-(n).  \label{thm:main-numberfld-C-2}
    \end{align}
    \end{small}
    \item Suppose $\delta_1 \neq 0$ and $p \neq 2$. Then 
    \begin{align}
        \lim_{X \to \infty} \frac{\#\left\{ \chi \in \mathcal{B}_K(X) : \dim_{\mathbb{F}_p} \Sel(T, \chi : \alpha) = n \right\}}{\# \mathcal{B}_K(X)} &= \frac{1}{2} E^+(n) + \frac{1}{2} E^-(n)   \label{thm:main-numberfld-B-1/2} \\
        \text{and} \notag \\
        \lim_{X \to \infty} \frac{\#\left\{ \chi \in \mathcal{C}_K(X) : \dim_{\mathbb{F}_p} \Sel(T, \chi : \alpha) = n \right\}}{\# \mathcal{C}_K(X)} &= \frac{1}{2} E^+(n) + \frac{1}{2} E^-(n).  \label{thm:main-numberfld-C-1/2}
    \end{align}
    \item Suppose $\delta_1 = 0$ and $p \neq 2$. Recall the probability distribution $\mathrm{E}_1(n)$ from Theorem \ref{KMR14:main}, where we fix $\Sigma = \Sigma_T$.
    Then 
    \begin{align}
        \lim_{X \to \infty} \frac{\#\left\{ \chi \in \mathcal{B}_K(X) : \dim_{\mathbb{F}_p} \Sel(T, \chi : \alpha) = n \right\}}{\# \mathcal{B}_K(X)} &= (1-\rho(\mathrm{E}_1)) E^+(n) + \rho(\mathrm{E}_1) E^-(n)   \label{thm:main-numberfld-B} \\
        \text{and} \notag \\
        \lim_{X \to \infty} \frac{\#\left\{ \chi \in \mathcal{C}_K(X) : \dim_{\mathbb{F}_p} \Sel(T, \chi : \alpha) = n \right\}}{\# \mathcal{C}_K(X)} &= (1-\rho(\mathrm{E}_1)) E^+(n) + \rho(\mathrm{E}_1) E^-(n).  \label{thm:main-numberfld-C}
    \end{align}
\end{itemize}
\end{theorem}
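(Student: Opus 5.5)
The plan is to decompose $\mathcal{B}_K(X)$ fan by fan and reduce everything to the fixed-width statement of Theorem \ref{KMR14:main}, now for the new functions $L_i$ of \eqref{eq:Li-norms}. We write
\[
\mathcal{B}_K(X)=\bigsqcup_{m}\bigsqcup_{k=0}^{2m}\bigsqcup_{\mathfrak d\in\mathcal D_{m,k,X}} C(\mathfrak d).
\]
The key point is that the Klagsbrun--Mazur--Rubin argument only uses the sizes of the $L_i$ through the effective Chebotarev input. At each step one adjoins a prime $\mathfrak p_{i}$ with norm at most $L_i(X)$ and needs equidistribution of its Frobenius in $\mathrm{Gal}(K(T)F_{\mathfrak d'}/K)$. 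Here $F_{\mathfrak d'}$ is the governing field determined by the previously chosen primes, and its discriminant is at most a power of $\prod_{j<i}\mathrm{Nm}\,\mathfrak p_j$.

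Under ERH, Theorem \ref{thm:effective_chebotarev} bounds the error by $X_i^{1/2}(\log|\mathrm{Disc}|+[F:\mathbb Q]\log X_i)$. For $i>m_s$ we have $X_i\le M_{i-m_s}$, while the degree and discriminant are controlled by $p^{O(i)}$ and $\prod_{j<i}\mathrm{Nm}\,\mathfrak p_j$. The doubling structure $M_{j+1}=M_j^2$ and $m\ll\log\log X$ then make this error $o(\pi(X_i))$ uniformly. The first $m_s$ small primes ($\le(\log X)^A$) are not equidistributed, but there are at most $\log\log X/\log\log\log X$ of them. We therefore treat the product $\prod\mathfrak q_i$ as enlarging $\Sigma$, so that it only affects the initial distribution.

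With this in hand, we run the Markov step for each large prime $\mathfrak p_j$. Given the Selmer rank $r$ for $\mathfrak d'$, adjoining $\mathfrak p\in\mathcal P_1$ moves the rank to $r\pm1$ with the probabilities of $\mathbf M_L$. Adjoining $\mathfrak p\in\mathcal P_2$ acts by $\mathbf M_L^2$ or by the identity, with weights from \cite{KMR14}, and $\mathfrak p\in\mathcal P_0$ acts trivially. Averaging over $k$, with Chebotarev densities $\delta_0,\delta_1,\delta_2$ attached to each prime, the aggregated operator becomes $(\alpha_0\mathbf{Id}+\alpha_1\mathbf M_L+\alpha_2\mathbf M_L^2)^{m_l}$. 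Here $\alpha_1$ is proportional to $\delta_1$.

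We then invoke Theorem \ref{thm:geometric-ergodicity}, which covers the cases needed.
\begin{itemize}
\item If $p$ is odd and $\delta_1\neq0$, part (3) gives $\tfrac12E^++\tfrac12E^-$.
\item If $p$ is odd and $\delta_1=0$, part (4) gives the parity-preserving limit $(1-\rho(\mathrm E_1))E^++\rho(\mathrm E_1)E^-$.
\item If $p=2$, the full Galois image forces each adjoined prime to change parity deterministically in accordance with $\prod\chi_v(\Delta)$. Averaging over the two classes, weighted as in \cite[Prop.~10.7]{KMR14}, yields $(\tfrac12+\delta)E^++(\tfrac12-\delta)E^-$.
\end{itemize}
Since $m_l\ge c_1\log\log X\to\infty$, the $\gamma^{m_l}$ error vanishes. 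The factor $\mathbb E[p^\mu]$ stays bounded because the initial distribution over $\Omega_{\mathfrak S}$ has bounded Selmer dimension; this requires a uniform bound of $|\Sigma|$ plus $m_s$, which is where the constraint $m_s\le\epsilon\log\log X$ is used.

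The passage from $\mathcal B_K(X)$ to $\mathcal C_K(X)$ uses Proposition \ref{prop:IfanvsI}. Both sets differ from the set of characters with $\mathrm{Rad}\le X$ by a density-zero set of conductors, and $\#C(\mathfrak d)$ is essentially uniform, roughly $(p-1)^{\omega(\mathfrak d)}$ times a class-group factor. So we must show that reweighting by $(p-1)^{\omega}$ does not concentrate mass on the excluded $o(1)$ set. We would handle this with the Erd\H{o}s--Kac/Sathe--Selberg counts of Lemma \ref{lemma:SatheSelberg} weighted by $(p-1)^k$, repeating the proofs of Lemma \ref{lemma:smalldivisors} and Lemma \ref{lemma:largeprime} with this weight.

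The main obstacle is uniformity of the Chebotarev step when the governing field has degree $p^{m}$, which is polylogarithmic but growing. The first prime $\mathfrak p_1$ only ranges up to $X^{2^{-m_l}}$, so we must check that $2^{-m_l}\log X$ dominates $\log$ of the discriminant, which is about $p^{m}\sum\log\mathrm{Nm}\,\mathfrak q_i$. If this fails, the fallback is to restrict to primes with $\mathrm{Nm}\,\mathfrak p_1$ above a larger threshold and absorb the loss via Remark \ref{remark:better_fan}.
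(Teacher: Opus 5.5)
For the limits over $\mathcal{B}_K(X)$, your plan is essentially the paper's proof. You absorb the small part $\mathfrak{S}$ into an initial distribution $\mathbf{S}(E_1)$ (resp.\ $\mathbf{S}(E_1^{\pm})$). You twist by $\mathfrak{p}_1,\dots,\mathfrak{p}_{m_l}$ in increasing order, applying Theorem \ref{thm:effective_chebotarev} in the governing fields. You aggregate to $(\delta_0\mathbf{Id}+\delta_1\mathbf{M}_L+\delta_2\mathbf{M}_L^2)^{m_l}$ and finish with Theorem \ref{thm:geometric-ergodicity}. For $p=2$, the two parity classes get weight $\tfrac12$ each from \eqref{eqn:p=2-parity}, i.e.\ from $\delta_1=\tfrac12$; \cite[Proposition 10.7]{KMR14} only decides which of $E_1^{\pm}$ goes with which class.

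Two details need correcting.
\begin{enumerate}
\item $\mathbb{E}[p^{\mu}]$ does not stay bounded, because $m_s$ is not bounded. Each prime of $\mathfrak{S}$ can move the Selmer dimension by up to $2$, so $\mu$ is supported on $[0,C_T+2\omega(\mathfrak{S})]$. You only get $\mathbb{E}[p^{\mu}]\le p^{C_T+2}(\log X)^{2\log p/\log\log\log X}$. What $m_s\le\log\log X/\log\log\log X$ buys is that this is $(\log X)^{o(1)}$. It is then beaten by the factor $\gamma^{k}\le(\log X)^{(c_1/2)\log\gamma}$ coming from $k\ge\tfrac{c_1}{2}\log\log X$ iterations, which is exactly how the paper closes.
\item Your ``main obstacle'' is misdiagnosed. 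You need the error term in \eqref{eq:effectiveChebotarevGRH}, namely $x^{1/2}(\log|\mathrm{Disc}(F_\omega)|+[F_\omega:\mathbb{Q}]\log x)$, to be $o(x/\log x)$ for $x\ge M_1$. This only requires $\log x$ to dominate $\log\log|\mathrm{Disc}(F_\omega)|$ and $\log[F_\omega:\mathbb{Q}]$, not $\log|\mathrm{Disc}(F_\omega)|$ itself. Both are $O(\log\log X)$, while $\log M_1=2^{-m_l}\log X\ge(\log X)^{1-c_2\log 2}$. So the relative error is $\ll M_1^{-1/2+\epsilon}$ uniformly, as the paper asserts, and no fallback is needed.
\end{enumerate}

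The step that would genuinely fail is your passage to $\mathcal{C}_K(X)$. The paper simply cites Theorem \ref{thm:Erdos-Kac} and Proposition \ref{prop:IfanvsI} there. You instead want a version of Proposition \ref{prop:IfanvsI} with each $\mathfrak{d}$ weighted by $\#C(\mathfrak{d})\asymp(p-1)^{\omega(\mathfrak{d})}$, and that weighted density-one statement is false in general.
\begin{itemize}
\item The weight is supported on $\mathfrak{d}$ all of whose primes have $\mu_p\subset K_{\mathfrak{q}}$, a set of primes of density $1/[K(\mu_p):K]$.
\item Under this weight, Selberg--Delange gives $\omega(\mathfrak{d})$ normal order $\frac{p-1}{[K(\mu_p):K]}\log\log X$.
\item If $\mu_p\subset K$ and $p\ge 3$, as in Theorem \ref{thm:intro_superelliptic}, this is at least $2\log\log X$. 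That is above the fan's cap $(c_2+o(1))\log\log X$, where $c_2<1/\log 2$ is forced by $M_1=X^{2^{-m_l}}>T$.
\item So the weighted mass of $I_K^{Fan}(X)$ tends to $0$, and reweighting Lemmas \ref{lemma:smalldivisors} and \ref{lemma:largeprime} cannot change this.
\end{itemize}
Even without weights, a density-one comparison is unavailable. The condition $\mathrm{Nm}\,\mathfrak{p}_{m_l-1}\le X^{1/4}$ excludes every ideal whose second-largest prime factor has norm above $X^{1/4}$. By Poisson--Dirichlet statistics, that is more than a third of the square-free ideals of norm at most $X$.

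For the statement as written, you do not need such a comparison at all. By Definition \ref{def:B_K}, $\mathcal{C}_K(X)$ is the subset of $\mathcal{B}_K(X)$ cut out by $\mathrm{Rad}(\Delta_{L^\chi/K})\le X$. What must be controlled is the effect of that condition inside the fan. It couples the admissible range of $\mathfrak{p}_{m_l}$ with the ramification of $\chi$ at the finite places of $\Sigma$, hence with the initial distribution. That matters exactly when $p=2$ or $\delta_1=0$, where the limit depends on the initial distribution.
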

\begin{proof}

We first prove the desired statistical statement over the subset $\mathcal{B}_K(X)$.
Given a square-free product of places $\mathfrak{d}$ of $K$, let $\omega \in \Omega_\mathfrak{d}$.
For any Selmer structure $\mathrm{Sel}(T, \omega : \alpha)$ let $F_{\omega}/K(T)$ be the fixed field of  $\cap_c \ker \tilde c \subseteq G_K$  where $\tilde c$ is the restriction of $c \in \mathrm{Sel}(T, \omega: \alpha)$ to $\mathrm{Hom}(K(T), T)$. 

For the restriction $\tilde c$ of any $c \in \mathrm{Sel}(T, \omega : \alpha)$, say $F_c$ is the fixed field of $\ker \tilde c$. Note that $\mathrm{Gal}(F_c/K(T)) \leq (\mathbb{Z}/p\mathbb{Z})^2$. Consequently,  $\mathrm{Gal}(F_\omega/K) \hookrightarrow \mathrm{SL}_2(\mathbb{F}_p) \wr (\mathbb{Z}/p\mathbb{Z})^r$ for some non-negative integer $r$. Moreover, $[F_\omega : K] \leq cp^{c'm}$ for some constants $c,c'$ depending only on $T$ and $|\mathrm{Disc}(F)| \ll_{K, \Sigma} \mathrm{Nm}_\mathbb{Q}^K\mathfrak{d}^{[K :\mathbb{Q}]}$. 
 
Starting from here, we assume ERH. Suppose $\mathfrak{d} \in I_K^{Fan}(X)$. By Theorem \ref{thm:effective_chebotarev}, for any two conjugate-invariant non-trivial subsets $S, S' \subset \mathrm{Gal}(F_\omega/K)$ and any small enough $\epsilon > 0$, there exists an absolute constant $c > 0$ such that for sufficiently large $X$ we have
 \begin{equation}
     \left| \frac{\pi_S(X)}{\pi_{S'}(X)} - \frac{|S|}{|S'|}\right| \leq c \cdot  \frac{|S|}{|S'|} M_1^{-\frac{1}{2} + \epsilon},
 \end{equation}
where we recall that $\pi_S(X)$ is the number of prime ideals of norm at most $X$ whose Frobenius lies in $S$, and $M_1 = X^{\frac{1}{2^{m_l-1}}}$. For each $i = 0, 1, 2$, we denote by $\delta_i$ the following probabilities:
\begin{equation}
\delta_i = \frac{\#\{\mathfrak{p} \in \mathcal{O}_K\; : \mathrm{Nm}_\mathbb{Q}^K \mathfrak{p} \leq X, \omega(\mathfrak{p}) = i\}}{\#\{\mathfrak{p} \in \mathcal{O}_K\; : \mathrm{Nm}_\mathbb{Q}^K \mathfrak{p} \leq X \}}.
\end{equation}
We note that the densities $\delta_i$ can be obtained from applying Chebotarev density theorem to the Galois extension $K(T)/K$, see for example \cite[Lemma 5.9]{KMR14}. 

Before we proceed, we consider the following decomposition of the set $I_K^{Fan}(X)$. We can rewrite the prime ideal factorization of $\mathfrak{d} \in I_K^{Fan}(X)$ as
\begin{equation*}
    \mathfrak{d} = \prod_{i=1}^{m_s} \mathfrak{q}_i \cdot \prod_{j=1}^{m_l} \mathfrak{p}_{j}.
\end{equation*}
Given a fixed $m$, consider the following subset of ideals of $\mathcal{O}_K$:
\begin{equation}
    \mathcal{S}(X) := \left\{ \mathfrak{S} \subset \mathcal{O}_K : \omega(\mathfrak{S}) \leq \frac{\log \log X}{\log \log \log X}, \; \mathfrak{q} \mid \mathfrak{S} \implies \mathrm{Nm}_\mathbb{Q}^K \mathfrak{q} \leq (\log X)^A \right\},
\end{equation}
Given an ideal $\mathfrak{S} \in \mathcal{S}(X)$ and a prime ideal $\mathfrak{p} \in \mathcal{P}_{max}(X)$, we consider the following subset of $I_K^{Fan}(X)$:
\begin{equation}
    I_K^{Fan}(X,\mathfrak{S}) := \left\{ \mathfrak{d} := \prod_{i=1}^{m_s} \mathfrak{q}_i \prod_{j=1}^{m_l} \mathfrak{p}_j \in I_K^{Fan}(X) : \prod_{i=1}^{m_s} \mathfrak{q}_i = \mathfrak{S} \right\}.
\end{equation}
By definition, we have
\begin{equation}
    I_K^{Fan}(X) = \bigsqcup_{\mathfrak{S} \in \mathcal{S}(X)} I_K^{Fan}(X, \mathfrak{S}).
\end{equation}
Then we have the decomposition of the fan structure $\mathcal{B}_K(X)$ as 
\begin{equation}
    \mathcal{B}_K(X) := \bigsqcup_{\mathfrak{S} \in \mathcal{S}(X)} \bigsqcup_{\mathfrak{d} \in I_K^{Fan}(X,\mathfrak{S})} C_p(\mathfrak{d}).
\end{equation}

We treat the case of $p=2$ and $p \geq 3$ separately. 

\medskip 

\noindent \textbf{Case A, $p \neq 2$:}

Suppose first that $p \neq 2$.

We consider subsets of local characters $\bigsqcup_{\mathfrak{d} \in I_K^{Fan}(X, \mathfrak{S})} C_p(\mathfrak{d})$ for each choice of $\mathfrak{S} \in \mathcal{S}(X)$. Define for each non-negative integer $n \geq 0$ the probability
\begin{equation}
    E_\mathfrak{S}(n) := \frac{\#\{\chi \in \mathcal{C}_p(\mathfrak{S},X) : \dim_{\mathbb{F}_p} \mathrm{Sel}(T, \chi : \alpha) = n\}}{\# \mathcal{C}_p(\mathfrak{S},X)}.
\end{equation}
Then \cite[Proposition 7.2, Proposition 10.7]{KMR14} imply that there exists some Markov operator $\mathbf{S} := [s_{i,j}]_{i,j \geq 0}$ such that
\begin{equation*}
    E_\mathfrak{S}(n) = \mathbf{S}(E_1)(n),
\end{equation*}
and $s_{i,j} = 0$ whenever $|i - j| > 2 \omega(\mathfrak{S})$. We note that the construction of $\mathbf{S}$ depends on the choice of $\mathfrak{S}$, but we will demonstrate using Theorem \ref{thm:geometric-ergodicity} that such a dependence does not affect the statement of the main theorem.

The order of prime ideal factors in which we twist the Selmer structure $\mathrm{Sel}(T, \chi : \alpha)$ with respect to collections of $\mathfrak{d} = \mathfrak{S} \cdot \prod_{i=1}^{m_l} \mathfrak{p}_{m_l} \in I_K^{Fan}(X, \mathfrak{S})$ is given by
\begin{equation*}
    \mathfrak{p}_1, \mathfrak{p}_2, \mathfrak{p}_3, \cdots, \mathfrak{p}_{m_l-1}, \mathfrak{p}_{m_l}.
\end{equation*}
We consecutively use \cite[Proposition 9.4, Proposition 9.5, Theorem 11.6]{KMR14} for all such $m_l$ many prime ideal factors to construct the desired Markov operator governing variations of Selmer structures with respect to consecutive twists by $\mathfrak{p}_{m_i}$. Even though the norm conditions on $\mathfrak{p}_{m_l}$ depend on the norms of $\mathfrak{p}_i$'s and $\mathfrak{q}_j$'s, because the product of all such norms are at most $X^{\frac{1}{2} - \frac{1}{(\log X)^{c_2 \log 2}}} (\log X)^{\log \log X}$, the set of possible choices of ideals $\mathfrak{p}_{m_l}$ contain the set of ideals $\mathfrak{p}_{m_l}$ of norm between $X^{\frac{1}{4}}$ and $X^{\frac{1}{2} + \frac{1}{(\log X)^{c_2 \log 2}}} (\log X)^{-\log \log X}$. Hence, one can still apply Chebotarev density theorem as in \cite[Proposition 9.4, Proposition 9.5]{KMR14} to construct the desired Markov operator governing the variation of local Selmer structures with respect to twisting by $\mathfrak{p}_{m_l}$. Doing so allows us to define a set of non-negative real numbers $\{P_m\}_{m \in \mathbb{Z}}$ depending on the choice of $X$ that satisfies
\begin{equation}
\begin{cases}
    \sum_{m=0}^\infty P_m &= 1, \\
    P_m = 0 &\text{ if } m < c_1 \log \log X \text{ or } m > c_2 \log \log X,
\end{cases}
\end{equation}
such that the desired probability distribution
\begin{equation} \label{eq:keyexpression-p}
\frac{\#\left\{ \chi \in \bigsqcup_{\mathfrak{d} \in I_K^{Fan}(X,\mathfrak{S)}}C_p(\mathfrak{d}) : \dim_{\mathbb{F}_p} \Sel(T, \chi : \alpha) = n \right\}}{\# \bigsqcup_{\mathfrak{d} \in I_K^{Fan}(X,\mathfrak{S)}}C_p(\mathfrak{d})}
\end{equation}
can be rewritten as
\begin{align*}
    &= \sum_{m_l = \lceil c_1 \log \log X \rceil}^{\lfloor c_2 \log \log X \rfloor} P_{m_l} \cdot \left(\left(\left( \delta_0 \mathbf{Id} + \delta_1 \mathbf{M}_L + \delta_2 \mathbf{M}_L^2 \right)^{m_l}\right) (E_\mathfrak{S}) (n)\right) + O \left( X^{\left(-\frac{1}{2} + \epsilon \right) \cdot \frac{1}{(\log X)^{c_2 \log 2}}} \cdot \log \log X \right) \\
    &= \sum_{m_l = \lceil c_1 \log \log X \rceil}^{\lfloor c_2 \log \log X \rfloor} P_{m_l} \cdot \left( \left( \left( \delta_0 \mathbf{Id} + \delta_1 \mathbf{M}_L + \delta_2 \mathbf{M}_L^2 \right)^{m_l} \cdot \mathbf{S} \right) (E_1) (n)\right) + O \left( X^{\left(-\frac{1}{2} + \epsilon \right) \cdot \frac{1}{(\log X)^{c_2 \log 2}}} \cdot \log \log X \right),
\end{align*}
where the error term is obtained from using $M_1 := X^{\frac{1}{2^{m_l}}}$ and substituting $m_l = \lfloor c_2 \log \log X \rfloor$.
We further split our analysis into the cases $\delta_1 =0$ and $\delta_1 \neq 0$. Note that by assumption on $T$ we can guarantee that $\delta_0 \neq 0$ and $\delta_2 \neq 0$.


\begin{itemize}
    \item  \textbf{Case A.1: $\delta_1 \neq 0$}. We note that if $\delta_1 \neq 0$, then the Markov operator $\delta_0 \mathbf{Id} + \delta_1 \mathbf{M}_L + \delta_2 \mathbf{M}_L^2$ is irreducible, aperiodic, and positive recurrent over the state space of non-negative integers $\mathbb{Z}_{\geq 0}$. Hence, the Markov operator has a unique stationary distribution $\pi$ over the state space $\mathbb{Z}_{\geq 0}$. It is straightforward to check that the unique stationary distribution $\pi$ is given by the right hand side of \eqref{thm:main-numberfld-B-1/2}.

\item \textbf{Case A.2: $\delta_1 = 0$}.  If $\delta_1 = 0$, then the Markov operator $\delta_0 \mathbf{Id} + \delta_2 \mathbf{M}_L^2$ restricts to an irreducible, aperiodic, and positive recurrent Markov operator over two state spaces - one consisting of non-negative even integers $\mathbb{Z}_{\geq 0, even}$, and the other consisting of non-negative odd integers $\mathbb{Z}_{\geq 0, odd}$. Furthermore, the parity parameters $\rho(\mathbf{S}(E_1))$ and $\rho(E_1)$ are identical. We then let $X$ to grow arbitrarily large and use Definition \ref{def:Markov} to obtain the desired statistical statement over the subset $\mathcal{B}_K(X)$, proving \eqref{thm:main-numberfld-B}. 
\end{itemize}
We conclude \textbf{Case A} by taking disjoint unions of $I_K^{Fan}(X, \mathfrak{S})$ as $\mathfrak{S}$ varies over $\mathcal{S}(X)$.

\medskip 

\noindent \textbf{Case B, $p = 2$:}

We now suppose that $p = 2$. We recall the following probability distributions stated in Theorem \ref{KMR14:main}:
\begin{align*}
            \begin{split}
                \mathrm{E}^+_1(n) &:= \frac{\#\{\chi \in \mathcal{C}(1,X) : \dim_{\mathbb{F}_2} \mathrm{Sel}(T, \chi : \alpha) = n, \prod_{v \in \Sigma}\chi_v(\Delta) = 1\}}{\# \{\chi \in \mathcal{C}(1,X) : \prod_{v \in \chi_v} \chi_v(\Delta) = 1\}}, \\
                \mathrm{E}^-_1(n) &:= \frac{\#\{\chi \in \mathcal{C}(1,X) : \dim_{\mathbb{F}_2} \mathrm{Sel}(T, \chi : \alpha) = n, \prod_{v \in \Sigma}\chi_v(\Delta) = -1\}}{\# \{\chi \in \mathcal{C}(1,X) : \prod_{v \in \chi_v} \chi_v(\Delta) = -1\}}.
            \end{split}
\end{align*}
Denote by $\mathbb{R}[\mathbf{M}_L]$ a formal polynomial ring in one variable (given by the Markov operator $\mathbf{M}_L$). Let $\Phi_{even}, \Phi_{odd}: \mathbb{R}[\mathbf{M}_L] \to \mathbb{R}[\mathbf{M}_L]$ be $\mathbb{R}$-linear maps defined as
\begin{equation}
    \Phi_{even}(\mathbf{M}_L^k) := \begin{cases}
        \mathbf{M}_L^k &\text{ if } k \equiv 0 \mod 2, \\
        0 &\text{ if } k \equiv 1 \mod 2.
    \end{cases}
\end{equation}
\begin{equation}
    \Phi_{odd}(\mathbf{M}_L^k) := \begin{cases}
        0 &\text{ if } k \equiv 0 \mod 2, \\
        \mathbf{M}_L^k &\text{ if } k \equiv 1 \mod 2.
    \end{cases}
\end{equation}
We proceed as in \textbf{Case A}. We consider for each non-negative integer $n \geq 0$ the probability
\begin{align*}
    E_\mathfrak{S}(n) &:= \frac{\#\{\chi \in \mathcal{C}_p(\mathfrak{S},X) : \dim_{\mathbb{F}_p} \mathrm{Sel}(T, \chi : \alpha) = n\}}{\# \mathcal{C}_p(\mathfrak{S},X)}
\end{align*}
Then \cite[Proposition 7.2, Proposition 10.7]{KMR14} imply that there exists some Markov operator $\mathbf{S} := [s_{i,j}]_{i,j \geq 0}$ satisfying $s_{i,j} = 0$ whenever $|i - j| > 2 \omega(\mathfrak{S})$ and $|i - j| \not\equiv \omega(\mathfrak{S}) \mod 2$ such that
\begin{align*}
    E_\mathfrak{S}(n) = \begin{cases}
        \mathbf{S}(E_1^+)(n) &\text{ if } \omega(\mathfrak{S}) \equiv 0 \mod 2, \\
        \mathbf{S}(E_1^-)(n) &\text{ otherwise }.
    \end{cases}
\end{align*}
Furthermore, we also have $\rho(E_{\mathfrak{S}}(n)) = \rho(E_1^+(n))$ if $\omega(\mathfrak{S}) \equiv 0 \mod 2$, and $\rho(E_{\mathfrak{S}}(n)) = 1 - \rho(E_1^-(n))$ otherwise.

As before, the order of prime ideal factors in which we twist the Selmer structure $\mathrm{Sel}(T, \chi : \alpha)$ with respect to collections of $\mathfrak{d} = \mathfrak{S} \cdot \prod_{i=1}^{m_l} \mathfrak{p}_{m_l} \in I_K^{Fan}(X, \mathfrak{S})$ is given by
\begin{equation*}
    \mathfrak{p}_1, \mathfrak{p}_2, \mathfrak{p}_3, \cdots, \mathfrak{p}_{m_l-1}, \mathfrak{p}_{m_l}.
\end{equation*}
We consecutively use \cite[Proposition 9.4, Proposition 9.5, Theorem 11.6]{KMR14} for all such $m_l$ many prime ideal factors. Again, one can still apply Chebotarev density theorem as in \cite[Proposition 9.4, Proposition 9.5]{KMR14} to construct the desired Markov operator governing the variation of local Selmer structures with respect to twisting by $\mathfrak{p}_{m_l}$. 

One additional condition we need to consider is the parity of $\omega(\prod_{j=1}^{m_l} \mathfrak{p}_j)$ which affects the initial distribution to which we apply the desired Markov operators, as stated in Theorem \ref{KMR14:main}. Taking this into consideration, we can thus construct a set of non-negative real numbers $\{Q_m^\pm\}_{m \in \mathbb{Z}}$ depending on the choice of $X$ that satisfies
\begin{equation}
\begin{cases}
    \sum_{m=0}^\infty Q_m^+ + \sum_{m=0}^\infty Q_m^- &= 1, \\
    Q_m^\pm = 0 &\text{ if } m < c_1 \log \log X \text{ or } m > c_2 \log \log X,
\end{cases}
\end{equation}
such that the desired probability distribution
\begin{equation} \label{eqn:p=2}
\frac{\#\left\{ \chi \in \bigsqcup_{\mathfrak{d} \in I_K^{Fan}(X, \mathfrak{S})}C_p(\mathfrak{d}) : \dim_{\mathbb{F}_2} \Sel(T, \chi : \alpha) = n \right\}}{\# \bigsqcup_{\mathfrak{d} \in I_K^{Fan}(X, \mathfrak{S})}C_p(\mathfrak{d})}
\end{equation}
can be rewritten as follows. If $\omega(\mathfrak{S})$ is even, then we have
\begin{small}
\begin{align*}
    \eqref{eqn:p=2} &= \sum_{m_l = \lceil c_1 \log \log X \rceil}^{\lfloor c_2 \log \log X \rfloor} \biggl\{ Q_{m_l}^+ \cdot \biggl[ \Phi_{even} \left(\left( \frac{1}{3} \mathbf{Id} + \frac{1}{2} \mathbf{M}_L + \frac{1}{6} \mathbf{M}_L^2 \right)^{m_l} \right) \cdot \mathbf{S} \\
    & \hspace{100pt} + \Phi_{odd} \left(\left( \frac{1}{3} \mathbf{Id} + \frac{1}{2} \mathbf{M}_L + \frac{1}{6} \mathbf{M}_L^2 \right)^{m_l} \right) \cdot \mathbf{S}\biggr] (E^+_1) (n) \\
    & \hspace{60pt} + Q_{m_l}^- \cdot \biggl[ \Phi_{odd} \left(\left( \frac{1}{3} \mathbf{Id} + \frac{1}{2} \mathbf{M}_L + \frac{1}{6} \mathbf{M}_L^2 \right)^{m_l} \right) \cdot \mathbf{S} \\
    & \hspace{100pt} + \Phi_{even} \left(\left( \frac{1}{3} \mathbf{Id} + \frac{1}{2} \mathbf{M}_L + \frac{1}{6} \mathbf{M}_L^2 \right)^{m_l} \right) \cdot \mathbf{S} \biggr] (E^-_1)(n) \biggr\} \\
    & \hspace{15pt} + O \left( X^{\left(-\frac{1}{2} + \epsilon \right) \cdot \frac{1}{(\log X)^{c_2 \log 2}}} \cdot \log \log X \right).
\end{align*}
\end{small}
Otherwise, if $\omega(\mathfrak{S})$ is odd, then we have
\begin{small}
\begin{align*}
    \eqref{eqn:p=2} &= \sum_{m_l = \lceil c_1 \log \log X \rceil}^{\lfloor c_2 \log \log X \rfloor} \biggl\{ Q_{m_l}^+ \cdot \biggl[ \Phi_{even} \left(\left( \frac{1}{3} \mathbf{Id} + \frac{1}{2} \mathbf{M}_L + \frac{1}{6} \mathbf{M}_L^2 \right)^{m_l} \right) \cdot \mathbf{S} \\
    & \hspace{100pt} + \Phi_{odd} \left(\left( \frac{1}{3} \mathbf{Id} + \frac{1}{2} \mathbf{M}_L + \frac{1}{6} \mathbf{M}_L^2 \right)^{m_l} \right) \cdot \mathbf{S}\biggr] (E^-_1) (n) \\
    & \hspace{60pt} + Q_{m_l}^- \cdot \biggl[ \Phi_{odd} \left(\left( \frac{1}{3} \mathbf{Id} + \frac{1}{2} \mathbf{M}_L + \frac{1}{6} \mathbf{M}_L^2 \right)^{m_l} \right) \cdot \mathbf{S} \\
    & \hspace{100pt} + \Phi_{even} \left(\left( \frac{1}{3} \mathbf{Id} + \frac{1}{2} \mathbf{M}_L + \frac{1}{6} \mathbf{M}_L^2 \right)^{m_l} \right) \cdot \mathbf{S} \biggr] (E^+_1)(n) \biggr\} \\
    & \hspace{15pt} + O \left( X^{\left(-\frac{1}{2} + \epsilon \right) \cdot \frac{1}{(\log X)^{c_2 \log 2}}} \cdot \log \log X \right).
\end{align*}
\end{small}
Here the identifications $\delta_0 = \frac{1}{3}$, $\delta_1 = \frac{1}{2}$, and $\delta_2 = \frac{1}{6}$ come from using effective Chebotarev density theorem with respect to the Galois extension $K(T)/K$. 

For each $m_l$, we can rewrite the Markov operators appearing in the summation as
\begin{align*}
    \Phi_{even} \left(\left( \frac{1}{3} \mathbf{Id} + \frac{1}{2} \mathbf{M}_L + \frac{1}{6} \mathbf{M}_L^2 \right)^{m_l} \right) &= \sum_{\substack{k = 0 \\ k \equiv 0 \mod 2}}^{2m_l-2} \delta_{m_l,k} \mathbf{M}_L^k, \\
    \Phi_{odd} \left(\left( \frac{1}{3} \mathbf{Id} + \frac{1}{2} \mathbf{M}_L + \frac{1}{6} \mathbf{M}_L^2 \right)^{m_l} \right) &= \sum_{\substack{k = 0 \\ k \equiv 1 \mod 2}}^{2m_l-2} \delta_{m_l,k} \mathbf{M}_L^k.
\end{align*}
where $\{\delta_{m_l,k}\}_{k=0}^{2m_l}$ are non-negative constants obtained from multinomial expansion of the expression $\left( \frac{1}{3} \mathbf{Id} + \frac{1}{2} \mathbf{M}_L + \frac{1}{6} \mathbf{M}_L^2 \right)^{m_l}$. We first observe that 
\begin{equation} \label{eqn:p=2-parity}
    \sum_{\substack{k=0 \\ k \equiv 0 \mod 2}}^{2m_l} \delta_{m_l,k} = \sum_{\substack{k=0 \\ k \equiv 1 \mod 2}}^{2m_l} \delta_{m_l,k} = \frac{1}{2}.
\end{equation}
Observe that by comparing the coefficients of the multinomial expansion, we obtain
\begin{equation} \label{eqn:p=2-parity2}
    \sum_{k=0}^{\frac{m_l}{2}} \delta_{m_l,k} \leq \sum_{k=\frac{3}{4}m_l}^{m_l} \frac{1}{3^k} < \frac{3}{2} \cdot 3^{-\frac{3}{4}m_l}.
\end{equation}
Equation \eqref{eqn:p=2-parity2} allows us to rewrite the expression \eqref{eqn:p=2} as follows. In case $\omega(\mathfrak{S})$ is even, we have
\begin{align*}
    \eqref{eqn:p=2} &= \sum_{m_l = \lceil c_1 \log \log X \rceil}^{\lfloor c_2 \log \log X \rfloor} Q_{m_l}^+ \cdot \left( \sum_{\substack{k=\frac{m_l}{2} \\ k \equiv 0 \mod 2}}^{2m_l} \delta_{m,k} (\mathbf{M}_L^k \cdot \mathbf{S}) + \sum_{\substack{k=\frac{m_l}{2} \\ k \equiv 1 \mod 2}}^{2m_l} \delta_{m,k} (\mathbf{M}_L^k \cdot \mathbf{S}) \right)(E_1^+)(n) \\
    & \hspace{50pt}+ Q_{m_l}^- \cdot \left( \sum_{\substack{k=\frac{m_l}{2} \\ k \equiv 1 \mod 2}}^{2m_l} \delta_{m,k} (\mathbf{M}_L^k \cdot \mathbf{S}) + \sum_{\substack{k=\frac{m_l}{2} \\ k \equiv 0 \mod 2}}^{2m_l} \delta_{m,k} (\mathbf{M}_L^k \cdot \mathbf{S}) \right) (E_1^-(n))\\
    & \hspace{15pt} + O \left( (\log X)^{-\frac{3}{4} c_1 \log 3} + X^{\left(-\frac{1}{2} + \epsilon \right) \cdot \frac{1}{(\log X)^{c_2 \log 2}}} \right).
\end{align*}
If $\omega(\mathfrak{S})$ is odd, then we have
\begin{align*}
    \eqref{eqn:p=2} &= \sum_{m_l = \lceil c_1 \log \log X \rceil}^{\lfloor c_2 \log \log X \rfloor} Q_{m_l}^+ \cdot \left( \sum_{\substack{k=\frac{m_l}{2} \\ k \equiv 0 \mod 2}}^{2m_l} \delta_{m,k} (\mathbf{M}_L^k \cdot \mathbf{S}) + \sum_{\substack{k=\frac{m_l}{2} \\ k \equiv 1 \mod 2}}^{2m_l} \delta_{m,k} (\mathbf{M}_L^k \cdot \mathbf{S}) \right)(E_1^-)(n) \\
    & \hspace{50pt}+ Q_{m_l}^- \cdot \left( \sum_{\substack{k=\frac{m_l}{2} \\ k \equiv 1 \mod 2}}^{2m_l} \delta_{m,k} (\mathbf{M}_L^k \cdot \mathbf{S}) + \sum_{\substack{k=\frac{m_l}{2} \\ k \equiv 0 \mod 2}}^{2m_l} \delta_{m,k} (\mathbf{M}_L^k \cdot \mathbf{S}) \right) (E_1^+(n))\\
    & \hspace{15pt} + O \left( (\log X)^{-\frac{3}{4} c_1 \log 3} + X^{\left(-\frac{1}{2} + \epsilon \right) \cdot \frac{1}{(\log X)^{c_2 \log 2}}} \right).
\end{align*}
Let $X$ be arbitrarily large. Then we use stationary distributions of the Markov chains $\mathbf{M}_L^{2k}$ and $\mathbf{M}_L^{2k+1}$ appearing in Definition \ref{def:Markov} and equation \eqref{eqn:p=2-parity} to rewrite \eqref{eqn:p=2} as
\begin{align*}
    \lim_{X \to \infty} \eqref{eqn:p=2} &= \left(\frac{1}{2} (1 - \rho(E^+_1)) E^+(n) + \frac{1}{2} \rho(E^+_1) E^-(n)\right) + \left(\frac{1}{2} \rho(E^-_1) E^+(n) + \frac{1}{2} (1-\rho(E^-_1)) E^-(n)\right) \\
    &= \left(\frac{1}{2} + \frac{1}{2} \left(\rho(E^-_1) - \rho(E^+_1) \right) \right) E^+(n) + \left(\frac{1}{2} - \frac{1}{2} \left(\rho(E^-_1) - \rho(E^+_1) \right) \right) E^-(n).
\end{align*}
We then conclude \textbf{Case B} by taking disjoint unions of $I_K^{Fan}(X, \mathfrak{S})$ as $\mathfrak{S}$ varies over $\mathcal{S}(X)$.

\medskip 

\noindent \textbf{Geometric Ergodicity}: One fact we crucially use in both \textbf{Case A} and \textbf{Case B} is that the probability distributions $\mathbf{S}(E_1)(n)$ and $\mathbf{S}(E_1^\pm)(n)$ are equal to $0$ as long as $n > 2 \frac{\log \log X}{\log \log \log X} + 2 + C_T$ for some fixed constant $C_T$ depending only on the choice of $T$. We can apply geometric ergodicity of Markov operators, in particular Theorem \ref{thm:geometric-ergodicity} with respective choices of parts (1),(2),(3),(4), and prime $p$. In the statement of the theorem we set $\mu := \mathbf{S}(E_1)$ or $\mathbf{S}(E_1^\pm)$ and set $N := 2 \frac{\log \log X}{\log \log \log X} + 2 + C_T$. Then one has
\begin{equation*}
    \mathbb{E}[p^\mu] < p^{2 \frac{\log \log X}{\log \log \log X} + 2 + C_T} = p^{2 + C_T} \cdot (\log X)^{\frac{2 \log p}{\log \log \log X}}.
\end{equation*}
Because in both \textbf{Case A} and \textbf{Case B} we apply the respective Markov operators appearing in each part of Theorem \ref{thm:geometric-ergodicity} by at least $k = \frac{1}{2} \cdot \lceil c_1 \log \log X \rceil$ many iterations, the supremum of the rate of convergence of the result of applying the Markov operator to $\mu$ is at most
\begin{align*}
    B \cdot \mathbb{E}[p^\mu] \cdot \gamma^k &< B \cdot p^{2 + C_T} \cdot (\log X)^{\frac{2 \log p}{\log \log \log X}} \cdot \gamma^{\frac{1}{2} \cdot c_1 \log \log X} \\
    &\leq B \cdot p^{2 + C_T} \cdot (\log X)^{\frac{2 \log p}{\log \log \log X}} \cdot (\log X)^{\frac{c_1}{2} \log \gamma} \\
    &\leq B \cdot p^{2 + C_T} \cdot (\log X)^{\frac{c_1}{2} \log \gamma + \frac{2 \log p}{\log \log \log X}}
\end{align*}
for some explicit constant $B > 0$ and $\gamma \in (0,1)$. Hence for both cases, rate of convergence is given by some explicit constant multiple of $(\log X)^{\frac{c_1}{2} \log \gamma + o(1)}$. In particular, for the expression \eqref{eq:keyexpression-p}, the rate of convergence is given by some explicit constant multiple of $(\log X)^{c_1\log \gamma + o(1)}$.

\medskip 

\noindent \textbf{Extending the statistics:}   We now prove \eqref{thm:main-numberfld-C-2}, \eqref{thm:main-numberfld-C-1/2}, and \eqref{thm:main-numberfld-C}, the analogous statements over $\mathcal{C}_K(X)$. By Theorem \ref{thm:Erdos-Kac}, all but $o(X)$ many ideals $\mathfrak{m} \subset \mathcal{O}_K$ of norm bounded above by $X$ satisfy the condition that $\omega(\mathfrak{m})$ is between $c_1 \log \log X$ and $c_2 \log \log X$ for some $c_1 \in (0,1)$ and $c_2 \in (1, 1/\log 2)$. By Proposition \ref{prop:IfanvsI}, the number of square-free ideals $\mathfrak{m} \subset \mathcal{I}_K^{\square-free}$ satisfying $c_1 \log \log X \leq \omega(\mathfrak{m}) \leq c_2 \log \log X$ that does not lie in $\mathcal{I}_K^{Fan}(X)$ is at most $O(X/\log \log X)$, where the implied constant of the error term depends on $K$. As we let $X$ to grow arbitrarily large, the number of such ideals form a density $0$ subset of $\mathcal{I}_K^{\square-free}$.
We combine this fact with Theorem \ref{thm:Erdos-Kac} to obtain
\begin{equation}
    \frac{\# I_K^{Fan}(X)}{\# I_K(X)} = 1 - o_K(X),
\end{equation}
where the implied constant of the error term depends on $K$. We can then take $X$ to grow arbitrarily large that asymptotically $100 \%$ of $I_K(X)$ is $I_K^{Fan}(X)$, from which the desired statistical statement over $\mathcal{C}_K(X)$ is determined from that over $\mathcal{B}_K(X)$.
\end{proof}

\begin{remark} \label{remark:improve_rate_convergence}
    For the case where $K = \mathbb{F}_q(t)$ is a global function field with $q \equiv 1 \text{ mod } p$, the rate of convergence of twisted Selmer groups of a fixed Galois module satisfying an analogue of Condition \ref{condition:T} is of order $O(1/(n \log q)^{c(p)})$ for some constant $c(p) \in (0,1)$ depending on $p$ \cite{park2022prime, Park25-1}. Here, $n$ is the degree of the conductor of the order $p$ cyclic character of $\mathrm{Gal}(\overline{K}/K)$ twisting the fixed Galois module. It is a natural question to ask whether a same order of magnitude of the rate of convergence of distribution of twisted Selmer groups can be obtained as well for $K$ a number field assuming ERH.
    
    We can use the relaxed definition of $\mathcal{D}_{m,k,X}$ (and the fan structure $\mathcal{B}_m(X)$ accordingly) appearing in Remark \ref{remark:better_fan} to improve the rate of convergence of the limits in Theorem \ref{thm:fanB} to be of order $O \left(1 / (\log X)^c \right)$ for some constant $c \in (0,1)$. For such a $\mathfrak{d} \in \mathcal{D}_{m,k,X}$, we consider the order of prime ideal factors (excluding $\mathfrak{S}$) in which we twist the Selmer structure $\mathrm{Sel}(T, \chi : \alpha)$ to be
    \begin{equation*}
        \mathfrak{p}_1, \mathfrak{p}_2, \cdots, \mathfrak{p}_{m_l - \lfloor \epsilon' \log \log X \rfloor - 1}, \mathfrak{p}_{m_l - \lfloor \epsilon' \log \log X \rfloor }, \cdots, \mathfrak{p}_{m_l - 1}, \mathfrak{p}_{m_l}.
    \end{equation*}
    We note that one can still apply Chebotarev density theorem governing the twists by $\mathfrak{p}_{m_l - \lfloor \epsilon' \log \log X \rfloor}$, $\cdots$, $\mathfrak{p}_{m_l}$ because the one is choosing at most $\epsilon' \log \log X$ many prime ideals inside the set of prime ideals that contain the set of those of norm between $X^a$ and $X^{\frac{1}{2}}$ for any $a > 0$.
    Under such an order of twist, the distribution of dimensions of $\mathrm{Sel}(T, \chi : \alpha)$ converges to the respective distribution with rate of convergence at most of order $O \left( p^{2 \epsilon' \log \log X + \delta} \cdot \gamma^{c_1 \log \log X} \right)$, where $\gamma \in (0,1)$ is the geometric rate of convergence of the Markov operator $\mathbf{M}_L$ obtained from \cite[Theorem 15.0.1]{MT93}. We choose $\epsilon' \in (0, \frac{-c_1 \log \gamma}{2 \log p})$ to ensure that the term $p^{2 \epsilon' \log \log X + \delta} \cdot \gamma^{c_1 \log \log X} \ll_\delta (\log X)^{2 \epsilon' \log p + c_1 \log \gamma}$ converges to $0$ as $X$ grows arbitrarily large. Considering all the error terms, the exponent $c$ determining the rate of convergence $O(1/(\log X)^c)$ can be computed from computing the order $$O \left( (\log X)^{\frac{c_2 \log 2 + 1}{2} - 1} + (\log X)^{\epsilon' \log \log 2 + \delta} + (\log X)^{2 \epsilon' \log p + c_1 \log \gamma} \right)$$ for any suitable choice of $\epsilon' \in (0, \frac{-c_1 \log \gamma}{2 \log p})$.
\end{remark}

\begin{remark}
    The stationary distribution for \textit{every} Markov process under consideration is the same given the parity of the initial distribution $\mu$: Namely, the one given by $\delta_0 \mathbf{Id} + \delta_1 \mathbf{M}_L + \delta_2 \mathbf{M}_L^2$, for the mod-$p$ Lagrangian operator $\mathbf{M}_L$. For this reason, we don't need to be explicit about the constants $P_{m_l}$ and $Q_{m_l}^{\pm}$ in the proof of the theorem.
\end{remark}

\begin{remark}
    Using the large deviation results from \cite{MZ16, KP25-LD}, one can explicitly compute that the rate of convergence of the limits appearing in Theorem \ref{thm:fanB} is of order $O \left( 1 / \log \log X \right)$. The main contributor for the rate of convergence originates from the condition on the norm of the second largest prime ideal factor $\mathrm{Nm}_\mathbb{Q}^K \mathfrak{p}_{m_l-1}$ of elements $\mathfrak{d}$ in $\mathcal{D}_{m,k,X}$. This rate of convergence (conditional under ERH) is a slight improvement of the rate of convergence of distribution of $2^k$-Selmer groups of twist families of some fixed Galois modules computed in \cite[Theorem 4.18]{Smith1}, which is of order $O \left( \mathrm{Exp} \left(c (\log \log \log X)^{-\frac{1}{2}} \right) \right)$ for some constant $c > 0$.
\end{remark}

We can now use Theorem \ref{thm:fanB} to prove Theorem \ref{thm:intro}.
\begin{proof}[Proof of Theorem \ref{thm:intro}]
    Let $r_1,r_2$ be the number of real/complex embeddings of $K$. Then,
    \begin{equation}\label{eq:CtoN}
    N_p(K,X) = p^{r_1+r_2-1} \cdot \#\{F := \text{Fixed Field of} \ker \chi \mid \chi \in C_p(X) \}.
    \end{equation}
    Given a generator $\sigma \in \mathrm{Gal}(F/K) \cong \mathbb{Z}/p\mathbb{Z}$, we have
    \begin{equation}\label{eq:rankbound}
    \mathrm{rk}(E/F) - \mathrm{rk}(E/K) \leq (p-1) \cdot \dim_{\mathbb{F}_p} \mathrm{Sel}_{1-\sigma}(A^\chi / K)
    \end{equation}
    from Example \ref{ex:Twists}.
    The result now follows from \eqref{eq:CtoN}, \eqref{eq:rankbound},  Theorem \ref{thm:fanB}, and Definition \ref{def:Markov}.
\end{proof}
\begin{remark}
    In fact, Theorem \ref{thm:intro} affirms a weaker statement of \cite[Example 3.9]{Smith2}, where Smith shows that a given principally polarized abelian variety over a number field $K$ satisfying some technical conditions (see for example \cite[Case 2.12, Case 2.13]{Smith2}) obtains rank growth of at most $\ell - 1$ over $100\%$ of cyclic $\ell$ Galois extensions over $K$. Our result, however, demonstrates that assuming ERH allows one to improve the rate of convergence up to a power of log saving error term.
\end{remark}

\section{Superelliptic Curves: proof of Theorem \ref{thm:intro_superelliptic}}\label{sec:superelliptic}

We now proceed to prove Theorem \ref{thm:intro_superelliptic}. Fix any prime $p \geq 5$. Let $K$ be a number field which contains $\mathbb{Q}(\zeta_p)$. Fix a superelliptic curve $C: y^p = x^3 + a_2x^2 + a_1x + a_0$ such that the splitting field of the cubic polynomial $x^3 + a_2x^2 + a_1x + a_0$ is an $S_3$ Galois extension over $K$.

We denote by $\mathcal{S}_p(X)$ the set of isomorphism classes of twist families of $C$ as follows:
\begin{equation}
    \mathcal{S}_p(C, X) := \left\{ C_d: dy^p = x^3 + a_2x^2 + a_1x + a_0 \; | \; \mathrm{Gal}(K(\sqrt[p]{d})/K) \cong \mathbb{Z}/p\mathbb{Z}, \; K(\sqrt[p]{d}) \in \mathcal{C}_K(X) \right\}.
\end{equation}
We obtain the following theorem.
\begin{theorem}
    Assume ERH and fix a prime $p \geq 5$. Suppose $C: y^p = F(x) := x^3 + a_2x^2 + a_1x + a_0$ is a superelliptic curve over $K$ such that the splitting field of $F(x)$ is an $S_3$ Galois extension over $K$. Then there exists a fixed constant $\delta_C \in [0,1]$ depending on the choice of $C$ such that for any $n \geq 0$,
    \begin{equation}
        \lim_{X \to \infty} \frac{\#\{C_d \in \mathcal{S}_p(C,X) \; | \; \dim_{\mathbb{F}_p} \mathrm{Sel}_{1-\zeta_p}(\mathrm{Jac}(C_d)/K) = n\}}{\# \mathcal{S}_p(C,X)} = (1-\delta_C) E^+(n) + \delta_C E^-(n).
    \end{equation}
\end{theorem}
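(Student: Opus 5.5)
The plan is to reduce the statement to Theorem \ref{thm:fanB} applied to the Galois module $T := \mathrm{Jac}(C)[1-\zeta_p]$ with the twisting data $\alpha$ of Example \ref{example:superelliptic}. First, I will check the hypotheses. By Example \ref{example:superelliptic} (via \cite[Lemma 3.6]{Park25-1}), $T$ satisfies Condition \ref{condition:T} when the splitting field of $F$ is an $S_3$-extension and $p \geq 5$. The quadratic forms $q_v$ supply the required metabolic structure, and $\alpha$ is a valid twisting data by \cite[Lemma 2.16]{Yu16}. Since $K \supseteq \mathbb{Q}(\zeta_p)$, every character $\chi \in C_p(K)$ has the form $\chi^d$ for some $d \in K^\times/(K^\times)^p$ by Kummer theory. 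This lets me identify $\mathcal{S}_p(C,X)$ with the set of fields $K(\sqrt[p]{d})$ cut out by characters in $\mathcal{C}_K(X)$, modulo the action of $\mathrm{Aut}(\mu_p)$ (the choices $d^i$, $i \in (\mathbb{Z}/p)^\times$, give the same field). Twisting data are defined on $\mathrm{Hom}/\mathrm{Aut}(\mu_p)$, so the Selmer dimension is constant on each orbit. Each orbit has exactly $p-1$ elements, so the proportions over $\mathcal{S}_p(C,X)$ equal those over $\mathcal{C}_K(X)$. The identity $\mathrm{Sel}_{1-\zeta_p}(\mathrm{Jac}(C_d)/K) = \mathrm{Sel}(T, \chi^d : \alpha)$ from Example \ref{example:superelliptic} then converts the count into exactly the quantity in \eqref{thm:main-numberfld-C-1/2} or \eqref{thm:main-numberfld-C}.

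Second, I will determine which case of Theorem \ref{thm:fanB} applies by computing $\delta_1$ through Chebotarev on $K(T)/K$, whose Galois group is $S_3$ acting by the six matrices listed in Example \ref{example:superelliptic}. Since $p \geq 5$ does not divide $6$, no Frobenius has order $p$, so $\mathcal{P}_1$ has density zero and $\delta_1 = 0$. For $\mathfrak{q} \notin \Sigma_T$ with $\mu_p \subset K$, $\dim H^1(K_\mathfrak{q},T) = 2\dim T^{\mathrm{Frob}_\mathfrak{q}}$. Checking the matrices directly:
\begin{itemize}
\item the identity gives dimension $4$;
\item the three transpositions (involutions with a $1$-dimensional fixed line) give dimension $2$;
\item the two $3$-cycles (no fixed vectors) give dimension $0$.
\end{itemize}
This last point is the one I must reconcile with the paper's Chebotarev description of $\mathcal{P}_i$, which was phrased for the $\mathrm{SL}_2$ case. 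To be safe, I will argue directly with local Tate duality rather than rely on the order-$p$ criterion. If transpositions do contribute, then $\delta_1 = 1/2 \neq 0$, and case A.1 of Theorem \ref{thm:fanB} gives the distribution with $\delta_C = 1/2$. If instead the correct computation forces $\delta_1 = 0$, case A.2 gives $(1-\rho(\mathrm{E}_1))E^+ + \rho(\mathrm{E}_1)E^-$ with $\delta_C := \rho(\mathrm{E}_1)$. Either way, the statement holds with some $\delta_C \in [0,1]$ depending only on $C$, through $\Sigma_T$ and $\alpha$. Both cases require $\delta_0, \delta_2 \neq 0$, which holds here since $3$-cycles and the identity occur with positive density.

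The main obstacle is the first step's compatibility: the argument of Theorem \ref{thm:fanB} invokes \cite[Propositions 9.4, 9.5, 10.7, Theorem 11.6]{KMR14}, which assume an alternating pairing. I would therefore verify, following the remark in Example \ref{example:superelliptic}, that those propositions use only the global metabolic structure given by the $q_v$ and the quadratic-form counts of $\mathcal{H}_{ram}(q_\mathfrak{q})$. I would also confirm that the local twisting data at $\mathfrak{q} \in \mathcal{P}_2$ is a bijection onto $\mathcal{H}_{ram}(q_\mathfrak{q})$ as required; for Jacobian twists this is \cite[Lemma 2.16]{Yu16}. Once that is granted, the remaining ingredients (the fan decomposition, geometric ergodicity via Theorem \ref{thm:geometric-ergodicity}, and density one of $I_K^{Fan}(X)$ from Proposition \ref{prop:IfanvsI}) transfer verbatim.
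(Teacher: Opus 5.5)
Your reduction is the same as the paper's (Example \ref{example:superelliptic} plus Theorem \ref{thm:fanB}). You part ways with it at $\delta_1$, and your computation is the correct one. With $\mathcal{P}_i$ defined by $\dim_{\mathbb{F}_p} H^1(K_\mathfrak{q},T)=2\dim T^{\mathrm{Frob}_\mathfrak{q}}=2i$, each of the three transpositions of $S_3$ fixes a line and has determinant $-1$, so they put half of all primes in $\mathcal{P}_1$ and $\delta_1=1/2$. The paper's proof gets $\delta_1=0$ from the ``Frobenius of order $p$'' description of $\mathcal{P}_1$. That description is valid only for Frobenius elements of determinant $1$: in $\mathrm{SL}_2(\mathbb{F}_p)$, an element with a fixed vector is trivial or unipotent. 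So there is no reason to keep Case A.2 alive. If Theorem \ref{thm:fanB} applied, it would force $\delta_C=1/2$, not the paper's $\delta_C=\rho(\mathrm{E}_1)\neq\frac12$.

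The genuine gap is the step you postponed with ``once that is granted'': it cannot be granted. Since $\mu_p\subset K$ is a trivial Galois module and $\Lambda^2T\cong\det T$ is the sign character of $S_3$, every Galois-equivariant pairing $T\times T\to\mu_p$ is symmetric (up to scalar there is only one). By graded commutativity of the cup product, the induced local pairing on $H^1(K_v,T)$ is then alternating for odd $p$. So the third bullet of Condition \ref{condition:T} fails. Moreover, no quadratic form $q_v$ can refine the Tate pairing, since one would need $2q_v(x)=\langle x,x\rangle_v=0$.

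The KMR inputs that Theorem \ref{thm:fanB} relies on then break. A $2$-dimensional symplectic $H^1(K_\mathfrak{q},T)$ has $p$ Lagrangian lines meeting $H^1_{ur}(K_\mathfrak{q},T)$ trivially, not one. Suppose $\mathfrak{q}\in\mathcal{P}_1$ and the Selmer group localizes to $0$ at $\mathfrak{q}$, which happens with probability $p^{-r}$. Then the image of the relaxed Selmer group is one of these $p$ lines, and it need not equal $\alpha(\omega'_\mathfrak{q})$, so the rank can stay unchanged. The transition is therefore not $\mathbf{M}_L$, and the parity need not flip.

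If the $p$ ramified classes are spread evenly over these lines, the transitions are $r\mapsto r-1$ with probability $1-p^{-r}$, $r\mapsto r+1$ with probability $p^{-r-1}$, and $r\mapsto r$ otherwise. This chain is reversible with stationary law proportional to $\prod_{j=1}^{n}(p^j-1)^{-1}$. That law charges every $n$ and has ratio $1/((p-1)(p^2-1))$ between $n=2$ and $n=0$. Any mixture $(1-\delta)E^++\delta E^-$ either vanishes at $n=0$ or has that ratio equal to $p^2/((p-1)(p^2-1))$.

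So neither case of Theorem \ref{thm:fanB} can be invoked for $T=\mathrm{Jac}(C)[1-\zeta_p]$. Your argument, like the paper's, rests on the unproved claim in Example \ref{example:superelliptic} and does not establish the statement.
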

\begin{proof}
    From Example \ref{example:superelliptic}, we have
    \begin{equation*}
        \mathrm{Sel}_{1-\zeta_p}(\mathrm{Jac}(C_d)/K) = \mathrm{Sel}(\mathrm{Jac}(C)[1-\zeta_p], \chi^d : \alpha),
    \end{equation*}
    where $\chi^d$ and $\alpha$ are defined as in the example.
    We note that $\delta_1 = 0$ for the given family $\mathcal{S}_p(C,X)$ for any such choice of $C$. This follows from combining \cite[Lemma 5.9]{KMR14} with the fact that no Frobenius element in the Galois group of the splitting field of the cubic polynomial $x^3 + a_2 x^2 + a_1 x + a_0$ are of order $p \geq 5$. Therefore, Theorem \ref{thm:fanB} gives the desired statement of the theorem, with $\delta_C := \rho(\mathrm{E}_1)$. We note that $\delta_C \neq \frac{1}{2}$ because the denominator of $\mathrm{E}_1$ is odd when $p \neq 2$, see for example \cite[Theorem 3.3]{Park25-1} for a similar phenomenon observed for analogously defined twist families of superelliptic curves over global function fields.
\end{proof}

The following result on uniform Mordell-Lang conjecture for curves \cite{DGH21} relates the algebraic rank of the Jacobian of a higher genus curve $C$ with its number of $K$-rational points. We state the simplified version of their theorem for superelliptic curves $C_d: dy^p = x^3 + a_2 x^2 + a_1 x + a_0$, whose genus is equal to $p-1$.
\begin{theorem}[Theorem 1.1 of \cite{DGH21}] \label{thm:uniform_mordell_lang}
    There exists a fixed constant $c(p,K) \geq 1$ independent of $X > 0$ and a choice of a superelliptic curve $C$ such that for any superelliptic curve $C_d \in \mathcal{S}_p(C,X)$,
    \begin{equation}
        \# C_d(K) \leq c(p,K)^{1 + \mathrm{rk}(\mathrm{Jac}(C_d)/K)}.
    \end{equation}
\end{theorem}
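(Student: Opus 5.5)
The statement is a specialization of the uniform Mordell--Lang bound of Dimitrov--Gao--Habegger, so the plan is to check that every curve $C_d \in \mathcal{S}_p(C,X)$ meets its hypotheses with a constant depending only on $p$ and $K$. The form we use is \cite[Theorem 1.1]{DGH21}: for a smooth projective geometrically irreducible curve $\mathcal{C}$ of genus $g \geq 2$ over a number field $K$, one has $\#\mathcal{C}(K) \leq c(g,[K:\mathbb{Q}])^{1+\mathrm{rk}(\mathrm{Jac}(\mathcal{C})/K)}$. Here $c(g,[K:\mathbb{Q}])$ depends only on $g$ and $[K:\mathbb{Q}]$, and in particular not on the height or the moduli of $\mathcal{C}$. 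We then set $c(p,K) := \max\{1, c(p-1,[K:\mathbb{Q}])\}$, which is independent of $X$, of $d$, and of the chosen curve $C$.

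\textbf{Step 1: smoothness and irreducibility.} The splitting field of $F$ is an $S_3$-extension of $K$, so $F$ is separable with three distinct roots $\alpha_1,\alpha_2,\alpha_3$. Fix $d$ with $\mathrm{Gal}(K(\sqrt[p]{d})/K) \cong \mathbb{Z}/p\mathbb{Z}$. The affine curve $dy^p = F(x)$ is then smooth: at a point with $y \neq 0$ we have $\partial/\partial y \neq 0$, and at a point $(\alpha_i,0)$ we have $F'(\alpha_i) \neq 0$. It is geometrically irreducible because $F$ is not a $p$-th power in $\overline{K}(x)$. We take $C_d$ to be the smooth projective model, as in Example \ref{example:superelliptic}.

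\textbf{Step 2: genus.} Apply Riemann--Hurwitz to $x: C_d \to \mathbb{P}^1$, a cyclic cover of degree $p$. It is totally ramified at the three roots $\alpha_i$. It is also totally ramified at $\infty$, since $\gcd(3,p)=1$, and this gives the unique point at infinity \cite[Section 3]{Sch98}. It is unramified elsewhere. Therefore
\[
2g-2 = p(-2) + 4(p-1),
\]
so $g = p-1 \geq 4$ for $p \geq 5$. This genus is the same for every $d$, because twisting by $d$ becomes an isomorphism over $\overline{K}$.

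\textbf{Step 3: conclusion.} Apply \cite[Theorem 1.1]{DGH21} to $C_d/K$ with $g = p-1$. The rank appearing there is that of $\mathrm{Jac}(C_d)(K)$, which is exactly the quantity in the statement, so we get $\# C_d(K) \leq c(p,K)^{1+\mathrm{rk}(\mathrm{Jac}(C_d)/K)}$.

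The main point needing care, though not a real obstacle, is uniformity. The constant in \cite{DGH21} must depend only on $g$ and $[K:\mathbb{Q}]$, not on the Faltings height of $\mathrm{Jac}(C_d)$, which grows with $d$. That is precisely the content of the "uniform" version, as opposed to Vojta/R\'emond-type bounds. The other requirement is that the genus stays fixed across the family, which Step 2 ensures.
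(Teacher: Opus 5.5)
Your proposal is correct and takes the same route as the paper, which simply invokes \cite[Theorem 1.1]{DGH21} for curves of genus $p-1$, with a constant depending only on the genus and $[K:\mathbb{Q}]$. Your smoothness and irreducibility checks and your Riemann--Hurwitz computation of $g = p-1$ spell out hypotheses that the paper takes for granted.
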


Using the above theorem, we can obtain the following result on Mordell-Lang conjecture for families of curves in $\mathcal{S}_p(C,X)$ as follows. The theorem below is a refinement of Theorem \ref{thm:intro_superelliptic}.
\begin{theorem}
    Assume ERH and fix a prime $p \geq 5$. Suppose $C: y^p = F(x) := x^3 + a_2x^2 + a_1x + a_0$ is a superelliptic curve over $K$ such that the splitting field of $F(x)$ is an $S_3$ Galois extension over $K$. Recall that $c(p,K)$ is an explicit constant from Theorem \ref{thm:uniform_mordell_lang}.
    \begin{itemize}
        \item For every $n \geq 0$,
        \begin{align}
        \limsup_{X \to \infty} \frac{\#\left\{ C_d \in \mathcal{S}_p(C,X) \; | \; \# C_d(K) \leq c(p,K)^{1+(p-1) \cdot n} \right\}}{\# \mathcal{S}_p(C,X)} \leq \min \left[\sum_{k=0}^n E^{+}(k), \sum_{k=0}^n E^-(k) \right].
    \end{align}
        \item There exists an explicit fixed constant $B < \infty$ such that
        \begin{align}
            \limsup_{X \to \infty} \frac{\sum_{C_d \in \mathcal{S}_p(C,X)} \# C_d(K)}{\# \mathcal{S}_p(C,X)} \leq B.
        \end{align}
        \item For arbitrarily large $X$, at least $99\%$ of curves $C_d \in \mathcal{S}_p(C,X)$ has at most $c(p,K)^{3p-2}$ many $K$-rational points.
    \end{itemize}
\end{theorem}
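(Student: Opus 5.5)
The plan is to turn the Selmer-rank distribution of the preceding theorem into statements about $\#C_d(K)$, using Theorem \ref{thm:uniform_mordell_lang} as the bridge.

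\textbf{Step 1: from Selmer rank to points.} Since $\zeta_p$ acts on $\mathrm{Jac}(C_d)$, the group $\mathrm{Jac}(C_d)(K)\otimes\mathbb{Z}_p$ is a $\mathbb{Z}_p[\zeta_p]$-module, free modulo torsion. Its image in $\mathrm{Jac}(C_d)(K)/(1-\zeta_p)$ injects into $\mathrm{Sel}_{1-\zeta_p}(\mathrm{Jac}(C_d)/K)$. This should give
\[
\mathrm{rk}(\mathrm{Jac}(C_d)/K)\le (p-1)\dim_{\mathbb{F}_p}\mathrm{Sel}_{1-\zeta_p}(\mathrm{Jac}(C_d)/K).
\]
Combined with Theorem \ref{thm:uniform_mordell_lang}, it yields the inclusion
\[
\{C_d:\dim\mathrm{Sel}_{1-\zeta_p}\le n\}\subseteq\{C_d:\#C_d(K)\le c(p,K)^{1+(p-1)n}\}.
\]
The density of the left-hand set is
\[
(1-\delta_C)\sum_{k\le n}E^+(k)+\delta_C\sum_{k\le n}E^-(k),
\]
by the preceding theorem.

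\textbf{Step 2: the first bullet, which is the main obstacle.} This density is a convex combination, so it lies between $\min$ and $\max$ of the two partial sums. However, the inclusion of Step 1 only bounds the proportion of curves with few points \emph{from below}. The stated upper bound on the $\limsup$ needs the reverse: curves with Selmer rank $>n$ must generically have more than $c(p,K)^{1+(p-1)n}$ points. Neither the parity argument nor Theorem \ref{thm:uniform_mordell_lang}, which gives only upper bounds on point counts, supplies this.

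I would therefore first try to see whether the bullet is meant with the complementary event. That reading is $\#C_d(K)>c(p,K)^{1+(p-1)n}$, with the bound $1-\min[\cdot]$, or $\max$ replacing $\min$. Under that reading the bound follows directly from Step 1 by complementation. If the wording must be kept literally, I would look for a lower bound on points in terms of rank, for instance via a Néron–Tate height count on the image of $C_d$ in its Jacobian. I expect this to fail, since rational points need not grow with rank. I would flag this step as the one where the argument as stated may not go through.

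\textbf{Step 3: second and third bullets.} For the average, bound
\[
\sum_{C_d}\#C_d(K)\le \sum_n c(p,K)^{1+(p-1)n}\cdot\#\{\dim\mathrm{Sel}=n\}.
\]
Since $E^\pm(n)\approx p^{-n^2/2}$ decays faster than $c(p,K)^{(p-1)n}$ grows, the limiting series converges to an explicit $B$.

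The real obstacle here is uniform integrability in $X$: the tail $n$ large must be controlled before taking the limit. I would use the finite-$X$ Markov description from the proof of Theorem \ref{thm:fanB}. There the distribution at height $X$ equals a mixture of iterates of $\delta_0\mathbf{Id}+\delta_2\mathbf{M}_L^2$ applied to $\mathbf{S}(E_1)$, up to a negligible Chebotarev error. I would then show these iterates have tails bounded uniformly by a fixed super-exponentially decaying profile, using that $\mathbf{M}_L$ moves up from state $r$ with probability only $p^{-r}$. The $o(1)$ proportion of curves outside the fan structure would be handled with the trivial bound $\dim\mathrm{Sel}\ll\omega(d)$.

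For the $99\%$ statement, take $n=2$ in Step 1. Table \ref{table:sum_prj} gives $\sum_{j\ge3}PR(j)<0.005$ for $p\ge5$, so at least $99\%$ of curves have $\#C_d(K)\le c(p,K)^{2p-1}\le c(p,K)^{3p-2}$.
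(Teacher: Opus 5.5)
Your Step 1 and your third bullet follow the paper's proof: Schaefer's bound $\mathrm{rk}(\mathrm{Jac}(C_d)/K)\le(p-1)\dim_{\mathbb{F}_p}\mathrm{Sel}_{1-\zeta_p}$, then Theorem \ref{thm:uniform_mordell_lang}, then the limiting Selmer distribution. Your diagnosis in Step 2 is also correct: the defect is in the statement, not in your argument, and as written the first bullet is false. By Step 1 and the preceding theorem, the proportion of $C_d$ with $\#C_d(K)\le c(p,K)^{1+(p-1)n}$ has $\liminf$ at least $(1-\delta_C)\sum_{k\le n}E^+(k)+\delta_C\sum_{k\le n}E^-(k)$. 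For $n=0$ the claimed upper bound is $\min[E^+(0),E^-(0)]=0$, which is contradicted whenever $\delta_C<1$. For $n=1$ it is $\min[E^+(0),E^-(1)]=E^+(0)$, which is contradicted whenever $\delta_C>0$, since $E^-(1)=\frac{p}{p-1}E^+(0)$. What the paper's proof actually establishes, via the identity $\min_{\delta\in[0,1]}[(1-\delta)a+\delta b]=\min[a,b]$, is exactly $\liminf\ge\min[\cdots]$. That is your complementary reading, and it is also the form the third bullet uses. So drop the N\'eron--Tate idea: no lower bound on points in terms of rank is needed.

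For the second bullet your route is more careful than the paper's. The paper simply asserts that the $\limsup$ of the average is at most $\min\bigl[\sum_kE^+(k)c(p,K)^{1+(p-1)k},\sum_kE^-(k)c(p,K)^{1+(p-1)k}\bigr]$. This does not follow from convergence in distribution, because no uniform integrability is shown. Even granting that, the limit is the $\delta_C$-convex combination, so only the $\max$ is justified, not the $\min$.

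Your uniform-integrability step is what is missing, and it goes through. The cleanest form is a Lyapunov drift bound. Take $V(r)=\lambda^r$ with $\lambda=c(p,K)^{p-1}$. Then $\mathbf{M}_LV(r)=\bigl((1-p^{-r})\lambda^{-1}+p^{-r}\lambda\bigr)V(r)$, so $(\delta_0\mathbf{Id}+\delta_2\mathbf{M}_L^2)V\le\beta V+b$ with $\beta<1$, for any $\lambda>1$. Iterating $m_l\ge c_1\log\log X$ times bounds the $V$-moment by $\beta^{m_l}\,\mathbb{E}_{\mathbf{S}(E_1)}[V]+b/(1-\beta)$. The initial term satisfies $\mathbb{E}_{\mathbf{S}(E_1)}[V]\le\lambda^{O(\log\log X/\log\log\log X)}$, which is killed by $\beta^{m_l}\le(\log X)^{c_1\log\beta}$. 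The Chebotarev errors are negligible against $\max V\le(\log X)^{O(1)}$ on the support. Note that Theorem \ref{thm:geometric-ergodicity} as stated controls only pointwise differences, so you genuinely need this $V$-weighted version.

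One correction in that step: curves outside the fan could not be handled by the trivial bound $\dim\mathrm{Sel}\ll\omega(d)$. The reason is that $\sum_{\mathrm{Nm}\,d\le X}z^{\omega(d)}\asymp_z X(\log X)^{z-1}$, with the weight concentrated at $\omega(d)\approx z\log\log X$. With $z=c(p,K)^{2(p-1)}$, this lies outside the fan once $z>c_2$. Fortunately the step is unnecessary here, because $\mathcal{S}_p(C,X)$ is defined through $\mathcal{C}_K(X)$ and so already lies in the fan.

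In the third bullet, mind a factor of $2$: on their supports $E^\pm=2\,PR$. So the limiting proportion with $\dim\mathrm{Sel}\ge3$ is at most $2\sum_{j\ge3}PR(j)\approx0.0084$ for $p=5$. Equivalently, the paper's check gives $\min[\sum_{k\le2}E^+(k),\sum_{k\le2}E^-(k)]=E^-(1)\approx0.9917$. This still leaves more than $99\%$, and the tail decreases in $p$.
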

\begin{proof}
    We use the upper bound from \cite[Proposition 3.5, Corollary 3.7]{Sch98} to get
    \begin{equation}
        \mathrm{rk}(\mathrm{Jac}(C_d)/K) \leq (p-1) \cdot \dim_{\mathbb{F}_p} \mathrm{Sel}_{1-\zeta_p}(\mathrm{Jac}(C_d)/K).
    \end{equation}
    Combined with Theorem \ref{thm:uniform_mordell_lang}, we obtain
    \begin{equation*}
        \# C_d(K) \leq c(p,K)^{1 + (p-1) \cdot \dim_{\mathbb{F}_p}\mathrm{Sel}_{1-\zeta_p}(\mathrm{Jac}(C_d)/K)}.
    \end{equation*}
    We then use the identity
    \begin{equation}
        \min_{\delta \in [0,1]} \left[ (1-\delta) \sum_{k=0}^n E^+(k) + \delta \sum_{k=0}^n E^-(k) \right] = \min \left[\sum_{k=0}^n E^{+}(k), \sum_{k=0}^n E^-(k) \right]
    \end{equation}
    to obtain the first statement of the theorem. To prove the second statement, we use the inequality 
    \begin{equation}
        \limsup_{X \to \infty} \frac{\sum_{C_d \in \mathcal{S}_p(C,X)} \# C_d(K)}{\# \mathcal{S}_p(C,X)} \leq \min \left[ \sum_{k=0}^\infty E^+(k) \cdot c(p,K)^{1+(p-1)\cdot k}, \sum_{k=0}^\infty E^-(k) \cdot c(p,K)^{1+(p-1) \cdot k} \right].
    \end{equation}
    Because both functions $E^+(n)$ and $E^-(n)$ decay quadratic exponentially at a rate of $O(p^{-n(n-1)/2})$ as $n$ grows arbitrarily large, both summations appearing on the right hand side of the inequality converge. We set $B$ to be the right hand side of the inequality to prove the second statement of the theorem. The last statement of the theorem follows from the inequality that holds for every $p \geq 5$:
    \begin{equation*}
        \min \left[\sum_{k=0}^2 E^{+}(k), \sum_{k=0}^2 E^-(k) \right] \geq \frac{99}{100}.
    \end{equation*}
\end{proof}


\section{Hyperelliptic Curves: proof of Theorem \ref{thm:intro_hyperelliptic}} \label{sec:hyperelliptic}

Using the techniques of the proof from the previous sections, in particular the case for $p = 2$ of Theorem \ref{thm:fanB}, we prove Theorem \ref{thm:hyperelliptic}. We borrow all the notations from Section \ref{sec:Selmer} and \ref{sec:newfan}, but with suitable adjustments. 

\subsection{Poonen-Rains Heuristics}

The statistics of dimensions of 2-Selmer groups of quadratic twist families of Jacobians of hyperelliptic curves depend on whether the hyperelliptic curve has a marked rational point at infinity or not. We introduce an invariant of Jacobians of hyperelliptic curves, whose size affects the distribution of the desired 2-Selmer groups.

We recall that $\Sha^1(K, \mathrm{Jac}(C)[2])$ is the following group:
\begin{equation}
    \Sha^1(K, \mathrm{Jac}(C)[2]) := \mathrm{Ker} \left( H^1(K, \mathrm{Jac}(C)[2]) \to \prod_{v \text{ place of } K} H^1(K_v, \mathrm{Jac}(C)[2]) \right).
\end{equation}
In fact, we have $\Sha^1(K, \mathrm{Jac}(C)[2]) = 0$ or $\mathbb{Z}/2\mathbb{Z}$. This follows from a highly non-trivial result by Poonen and Stoll \cite{PS99}, see in particular \cite[Conjecture 1.8, Remark 2.8, 4.15]{PR12} for more details. 

Given a quadratic character $\chi \in \mathcal{C}_K(X)$, we denote by $C^\chi$ the quadratic twist of $C$ by the global character $\chi$. The Galois-equivariant isomorphism $\mathrm{Jac}(C)[2] \cong \mathrm{Jac}(C)^\chi[2]$ implies that one has
\begin{equation*}
    \Sha^1(K, \mathrm{Jac}(C)[2]) \cong \Sha^1(K, \mathrm{Jac}(C^\chi)[2]).
\end{equation*}
We also have
\begin{equation*}
    H^1(K, \mathrm{Jac}(C)[2]) \cong H^1(K, \mathrm{Jac}(C^\chi)[2])
\end{equation*}
and for any local field $K_v$, 
\begin{equation*}
    H^1(K_v, \mathrm{Jac}(C)[2]) \cong H^1(K_v, \mathrm{Jac}(C^\chi)[2]).
\end{equation*}
Using \cite[Theorem 4.14]{PR12}, we obtain that the quotient of the 2-Selmer group
\begin{equation*}
    \frac{\mathrm{Sel}_2(\mathrm{Jac}(C^\chi)/K)}{\Sha^1(K, \mathrm{Jac}(C)[2])}
\end{equation*}
is isomorphic to the intersection of the images of the horizontal and the vertical map as presented in the diagram below (note that the quotienting space is invariant of the choice of $\chi$).
\begin{equation}
    \begin{tikzcd}
    & H^1(K, \mathrm{Jac}(C)[2]) \arrow[d] \\
    \prod_{v \text{ place of } K} \frac{\mathrm{Jac}(C^\chi)(K_v)}{2\mathrm{Jac}(C^\chi)(K_v)} \arrow[r] & \prod_{v \text{ place of } K} H^1(K_v, \mathrm{Jac}(C)[2]).
    \end{tikzcd}
\end{equation}
To understand the distribution of dimensions of $\mathrm{Sel}_2(\mathrm{Jac}(C^\chi)/K)$ as $\chi$ varies over $\mathcal{C}_K(X)$, it suffices to understand the distribution of the image of the horizontal maps within a fixed infinite dimensional vector space $\prod_{v \text{ place of } K} H^1(K_v, \mathrm{Jac}(C)[2])$. One can then scale the resulting distribution of quotients of 2-Selmer groups by the size of the subgroup $\Sha^1(K, \mathrm{Jac}(C)[2])$ to obtain the desired statistics.

\subsection{Yu's results}

From this subsection and onwards, we assume that $C$ is a hyperelliptic curve of genus $g$ over $K$ which satisfies the following conditions. We note that we consider both cases where $C$ has either a $K$-rational point at infinity or not.
\begin{itemize}
    \item $\mathrm{Jac}(C)[2]$ is a $2g$-dimensional vector space over $\mathbb{F}_2$.
    \item The action of $\mathrm{Gal}(\overline{K}/K)$ on $\mathrm{Jac}(C)[2]$ is irreducible.
    \item We have $\mathrm{Hom}_{\mathrm{Gal}(\overline{K}/K(\mu_p))}(\mathrm{Jac}(C)[2], \mathrm{Jac}(C)[2]) = \mathbb{F}_2$.
    \item We have $H^1(\mathrm{Gal}(K(\mathrm{Jac}(C)[2])/K), \mathrm{Jac}(C)[2]) = 0$.
    \item The following density of prime elements of $K$ is positive:
    \begin{equation*}
        \delta_1 := \lim_{X \to \infty}\frac{\#\{\mathfrak{p} \in \mathcal{O}_K \mid \mathrm{Nm}_\mathbb{Q}^K \mathfrak{p} \leq X, \dim H^1(K_\mathfrak{p}, T) = 2 \}}{\#\{\mathfrak{p} \in \mathcal{O}_K \mid \mathrm{Nm}_\mathbb{Q}^K \mathfrak{p} \leq X \}} > 0.
    \end{equation*}
\end{itemize}
For example, if $C$ is an odd hyperelliptic curve of genus $g$ (i.e. the affine equation defining $C$ is of form $C: y^2 = f(x)$ for some degree $2g+1$ polynomial $f$ over $K$, then all conditions above are satisfied if the splitting field of $f$ is a $S_{2g+1}$ Galois extension over $K$.

Given a square-free element $d \in K^\times / (K^\times)^2$, let $\chi^d \in \mathrm{Hom}(\mathrm{Gal}(\overline{K}/K), \mu_2)$ be a character whose fixed field of $\mathrm{Ker}(\chi^d)$ is $K(\sqrt{d})$. We can write a Weierstrass model for quadratic twist of $C$ by $\chi^d$ as $C^{\chi^d}: dy^2 = f(x)$. We denote by $\chi_v^d$ the restriction of $\chi^d$ to $\mathrm{Hom}(\mathrm{Gal}(\overline{K_v}/K_v), \mu_2)$ for any place $v$. As in Example \ref{example:superelliptic}, we denote by $\Sigma(d)$ the set of places of $K$ consisting of places of bad reduction of $\mathrm{Jac}(C)$ and places dividing $d$.

We have the Weil pairing which is a non-degenerate bilinear pairing
    \begin{equation}
        \mathrm{Jac}(C^{\chi^d})[2] \times \mathrm{Jac}(C^{\chi^d})[2] \to \mu_2,
    \end{equation}
    which induces a non-degenerate Tate local pairing for every place $v$ of $K$:
    \begin{equation*}
        H^1(K_v, \mathrm{Jac}(C)[2]) \times H^1(K_v, \mathrm{Jac}(C)[2]) \to \mathbb{F}_2.
    \end{equation*}
    Here we use the $\mathrm{Gal}(\overline{K}/K)$-equivariant isomorphism
    \begin{equation}
        \mathrm{Jac}(C^{\chi^d})[2] \cong \mathrm{Jac}(C)[2].
    \end{equation}
    We note that there is a subtlety with choosing a suitable form $q_v$ that is non-degenerate and symmetric. Following the notation in \cite[Definition 3.15]{PR12-theta}, we denote by $c_{\mathcal{T}} \in H^1(K, \mathrm{Jac}(C)[2])$ the class of the theta characteristic torsor of $\mathrm{Jac}(C)$. We also define
    \begin{equation}
        \Sha^1(K, \mathrm{Jac}(C)[2]) := \mathrm{Ker} \left( H^1(K, \mathrm{Jac}(C)[2]) \to \prod_{v \text{ place of } K} H^1(K_v, \mathrm{Jac}(C)[2]) \right).
    \end{equation}
    We divide up our analysis into four cases.
    \begin{itemize}
        \item $\deg f \equiv 1 \mod 2$
        \item $\deg f \equiv 0 \mod 2$, and $c_{\mathcal{T}} = 0$.
        \item $\deg f \equiv 0 \mod 2$, and $c_{\mathcal{T}} \in \mathrm{Sel}_2(\mathrm{Jac}(C)/K) \setminus \Sha^1(K, \mathrm{Jac}(C)[2])$.
        \item $\deg f \equiv 0 \mod 2$, and $c_{\mathcal{T}} \in \Sha^1(K, \mathrm{Jac}(C)[2])$.
    \end{itemize}
    In first, second, and fourth cases, we may use \cite[Theorem 3.16]{PR12-theta} to show that for all places $v$ of $K$, the induced pairing on $$H^1(K_v,\mathrm{Jac}(C)[2]) \times H^1(K_v,\mathrm{Jac}(C)[2]) \to \mathbb{F}_2$$ obtained from composing the cup product with the induced map on the second cohomology group by the Weil pairing is a non-degenerate symmetric pairing. In the third case, we may use \cite[Proposition 3.12]{PR12-theta} to show that the induced pairing on $H^1(K_v, \mathrm{Jac}(C)[2])$ is a non-degenerate symmetric pairing at all places $v$ of $K$ such that $\mathrm{Jac}(C)$ has good reduction. For other places $v$, one can still define a non-degenerate quadratic form $q_v: H^1(K_v, \mathrm{Jac}(C)[2]) \to \mathbb{F}_2$ by using \cite[Remark 2.8]{PR12}, where the new quadratic form preserves maximal isotropic subspaces. As in the superelliptic curve case, we endow a global metabolic structure on $\mathrm{Jac}(C)[1-\zeta_p]$ (as in \cite[Definition 3.3]{KMR13}), from which all the consequential results stated in \cite{KMR13, KMR14, Yu16, Yu19} can be utilized.

We extend all the definitions appearing in Section \ref{sec:Selmer} to hyperelliptic curves by allowing the indices $i$ appearing in the definitions of $\delta_i$ and $\mathcal{P}_i$ to satisfy $0 \leq i \leq 2g$, and by replacing $T$ by $Jac(C)[2]$. The twisting data $\alpha$ is given by the map
    \begin{equation}
        \alpha((\chi_v^d)_{v \in \Sigma(d)}) = \prod_{v \in \Sigma(d)} \frac{(\mathrm{Jac}(C_d))(K_v)}{2 (\mathrm{Jac}(C_d))(K_v)}.
    \end{equation}
Given a twisting data $\alpha$ and $\mathfrak{q} \in \mathcal{P}_i$ for $i \geq 3$, one has an injection (but not a surjection)
\begin{equation*}
    \alpha: \frac{\mathrm{Hom}_{ram}(\mathrm{Gal}(\overline{K}_q/K_q), \mu_p)}{\mathrm{Aut}(\mu_p)} \to \mathcal{H}_{ram}(q_v).
\end{equation*}
Then one has
\begin{equation}
    \mathrm{Sel}_2(\mathrm{Jac}(C^{\chi^d})/K) = \mathrm{Sel}(\mathrm{Jac}(C)[2], \chi^d : \alpha).
\end{equation}

With relevant generalizations of definitions in place, we can summarize key results and insights from \cite{Yu19} which will be of great importance for this manuscript. We note that Yu only focused on the case where $C$ has a $K$-rational point at infinity. Nevertheless, parts of Yu's results presented in this subsection can be generalized to any hyperelliptic curve. Instead of considering the local Selmer structure $\mathrm{Sel}(\mathrm{Jac}(C)[2], \omega : \alpha)$, one can instead consider the following quotient of the local Selmer structure
\begin{equation*}
    \overline{\mathrm{Sel}}(\mathrm{Jac}(C)[2], \omega : \alpha) := \frac{\mathrm{Sel}(\mathrm{Jac}(C)[2], \omega : \alpha)}{\Sha^1(K, \mathrm{Jac}(C)[2])}
\end{equation*}
regardless of whether $C$ has either a $K$-rational point at infinity or not, as long as the aformentioned conditions on $C$ holds. Note that regardless of the choice of $\omega$, every local Selmer structure contains a fixed invariant subspace $\Sha^1(K, \mathrm{Jac}(C)[2])$. The results presented below are adaptations of some results from \cite{Yu19} which will be of relevance for our results.

\begin{lemma}[Lemma 5.5 of \cite{Yu19}] 
\label{Yu:lemma5.5}
    Let $\omega \in \Omega_\mathfrak{d}$ and $\mathfrak{q} \nmid \mathfrak{d}$. Let $\mathfrak{q} \in \mathcal{P}_n$ and $\omega' \in \Omega_\mathfrak{\mathfrak{dq}}$ such that $\omega'$ has identical local characters at every place but $\mathfrak{q}$. Then
    \begin{itemize}
        \item $\dim_{\mathbb{F}_2} \overline{\mathrm{Sel}}(\mathrm{Jac}(C)[2],\omega' : \alpha) - \dim_{\mathbb{F}_2} \overline{\mathrm{Sel}}(\mathrm{Jac}(C)[2], \omega : \alpha) = \dim_{\mathbb{F}_2}(\alpha_\mathfrak{q}(\omega'_\mathfrak{q}) \cap V_\mathfrak{q}(\omega)) - \dim_{\mathbb{F}_2}(V_\mathfrak{q}(\omega) \cap H^1_{ur}(K_\mathfrak{q},\mathrm{Jac}(C)[2]))$
        \item $\dim_{\mathbb{F}_2} \overline{\mathrm{Sel}}(\mathrm{Jac}(C)[2], \omega' : \alpha) - \dim_{\mathbb{F}_2} \overline{\mathrm{Sel}}(\mathrm{Jac}(C)[2], \omega: \alpha) \equiv n \text{ mod } 2$.
    \end{itemize}
\end{lemma}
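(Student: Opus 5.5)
The plan is to follow the standard Klagsbrun--Mazur--Rubin comparison argument (cf. \cite[Lemma 5.5]{Yu19} and \cite[Proposition 7.2]{KMR14}), with one extra observation: everything passes to the quotient by $\Sha^1(K,\mathrm{Jac}(C)[2])$. Since $\Sha^1$ lies in every Selmer structure independently of $\omega$, the differences of dimensions of $\overline{\mathrm{Sel}}$ equal the differences of dimensions of $\mathrm{Sel}$. So it suffices to prove both formulas for $\mathrm{Sel}$ itself. Write $T=\mathrm{Jac}(C)[2]$ and $V_\mathfrak{q}(\omega)$ for the image of $\mathrm{Sel}(T,\omega:\alpha)$ in $H^1(K_\mathfrak{q},T)$ under localization.

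First introduce the strict and relaxed Selmer structures at $\mathfrak{q}$. Let $\mathcal{S}_0$ impose the local condition $0$ at $\mathfrak{q}$, and let $\mathcal{S}^{\mathfrak{q}}$ impose no condition at $\mathfrak{q}$; both agree with $\omega$ (equivalently $\omega'$) at all other places. Then $\mathcal{S}_0\subseteq \mathrm{Sel}(T,\omega:\alpha),\mathrm{Sel}(T,\omega':\alpha)\subseteq \mathcal{S}^{\mathfrak{q}}$.

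Global Poitou--Tate duality, using the self-duality of $T$ and the fact that the local conditions away from $\mathfrak{q}$ are maximal isotropic for the symmetric pairing constructed above, shows that the image of $\mathcal{S}^{\mathfrak{q}}$ in $H^1(K_\mathfrak{q},T)$ is maximal isotropic. Its dimension is therefore $\tfrac12\dim H^1(K_\mathfrak{q},T)=n$, since $\mathfrak{q}\in\mathcal{P}_n$ gives $\dim H^1(K_\mathfrak{q},T)=2n$. In particular $\dim\mathcal{S}^{\mathfrak{q}}-\dim\mathcal{S}_0=n$.

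For any maximal isotropic local condition $W$ at $\mathfrak{q}$, the dimension of the corresponding Selmer group equals $\dim\mathcal{S}_0+\dim(W\cap \mathrm{loc}_\mathfrak{q}\mathcal{S}^{\mathfrak{q}})$. Apply this with $W=H^1_{ur}(K_\mathfrak{q},T)$ for $\omega$, and with $W=\alpha_\mathfrak{q}(\omega'_\mathfrak{q})$ for $\omega'$, then subtract. This gives the first formula, after identifying $V_\mathfrak{q}(\omega)$ with $\mathrm{loc}_\mathfrak{q}\mathcal{S}^{\mathfrak{q}}$, in the sense that intersecting with the unramified subspace or with the ramified Lagrangian only sees this image. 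I expect the main obstacle to be making this identification precise, in particular checking that the relevant intersection is taken inside $\mathrm{loc}_\mathfrak{q}\mathcal{S}^{\mathfrak{q}}$. This is exactly the step handled in \cite[Proposition 7.2]{KMR14}, and I would follow it verbatim.

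For the parity statement, denote $L=\mathrm{loc}_\mathfrak{q}\mathcal{S}^{\mathfrak{q}}$, $U=H^1_{ur}(K_\mathfrak{q},T)$ and $W=\alpha_\mathfrak{q}(\omega'_\mathfrak{q})$. All three are maximal isotropic in a $2n$-dimensional quadratic space. Moreover $U\cap W=0$, because $\alpha$ takes values in $\mathcal{H}_{ram}$. The standard fact for quadratic spaces over $\mathbb{F}_2$ (\cite[Proposition 2.6]{KMR13}, \cite[Remark 2.8]{PR12}) is that $\dim(L\cap U)+\dim(L\cap W)\equiv \dim(U\cap W)+n \pmod 2$, i.e. Lagrangians fall into two classes distinguished by the parity of their pairwise intersection relative to $n$. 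Since $U\cap W=0$, the difference of the two intersection dimensions is congruent to $n$ mod $2$, which gives the second bullet. Here the subtlety is that in the third case of the four-case division the pairing at bad places was replaced by the modified $q_v$ of \cite[Remark 2.8]{PR12}. This causes no problem, because $\mathfrak{q}\notin\Sigma$ is a place of good reduction, where the honest Weil-pairing quadratic form is in force.
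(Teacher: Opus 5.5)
Your route is the right one. The paper gives no argument of its own here: it quotes \cite[Lemma 5.5]{Yu19} and only notes that $\Sha^1(K,\mathrm{Jac}(C)[2])$ lies in every Selmer structure, which is your first reduction. Your outline is the standard Klagsbrun--Mazur--Rubin/Yu argument behind that citation. Two steps, however, are wrong as written.

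\textbf{The definition of $V_{\mathfrak{q}}(\omega)$.} You define $V_{\mathfrak{q}}(\omega)$ as the localization of $\mathrm{Sel}(T,\omega:\alpha)$ itself, and the ``identification'' with $L=\mathrm{loc}_{\mathfrak{q}}\mathcal{S}^{\mathfrak{q}}$ that you plan to make precise is false. Since $\mathrm{Sel}(T,\omega:\alpha)=\{c\in\mathcal{S}^{\mathfrak{q}}:\mathrm{loc}_{\mathfrak{q}}c\in U\}$, its localization is $L\cap U$; this is the space whose dimension the paper calls $t(\omega,\mathfrak{q})$. With your reading, the right-hand side of the first bullet is $\dim(W\cap L\cap U)-\dim(L\cap U)=-\dim(L\cap U)$, because $W\cap U=0$. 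This fails whenever the twist raises the rank: for $n=1$ and $L=W$ the left side is $1$ and the right side is $0$. Intersecting with $U$ does only see $L\cap U$, but intersecting with $W$ does not.

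There is no obstacle to overcome here. $V_{\mathfrak{q}}(\omega)$ must be \emph{defined} as $\mathrm{loc}_{\mathfrak{q}}\mathcal{S}^{\mathfrak{q}}$, as in \cite{KMR14, Yu19}. The first bullet is then just your identity that the Selmer group with local condition $W$ at $\mathfrak{q}$ has dimension $\dim\mathcal{S}_0+\dim(W\cap L)$. That identity holds for every subspace $W$ and needs no duality; apply it to $W=\alpha_{\mathfrak{q}}(\omega'_{\mathfrak{q}})$ and to $W=U$.

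\textbf{The parity step for $p=2$.} Here you need $L$ to be maximal isotropic for the quadratic form $q_{\mathfrak{q}}$. Poitou--Tate duality only gives $L=L^{\perp}$ for the bilinear pairing, and that is not enough. In the hyperbolic plane $q(x,y)=xy$ over $\mathbb{F}_2$, the line spanned by $(1,1)$ is its own orthogonal complement, yet $q(1,1)=1$. Taking $U,W$ to be the two coordinate axes, one gets $\dim(L\cap U)+\dim(L\cap W)=0\not\equiv n=1$.

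The missing input is the reciprocity law $\sum_v q_v(\mathrm{loc}_v c)=0$ for $c\in H^1(K,T)$, which is part of a global metabolic structure \cite[Definition 3.3]{KMR13} (cf.\ \cite[Theorem 4.14]{PR12}). For $c\in\mathcal{S}^{\mathfrak{q}}$, every term with $v\neq\mathfrak{q}$ vanishes because $\mathrm{loc}_v c$ lies in a $q_v$-isotropic local condition. Hence $q_{\mathfrak{q}}$ vanishes on $L$.

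This is also why your closing remark is aimed at the wrong place. The modified forms at bad places matter because they must still satisfy this reciprocity law and vanish on the local conditions $\alpha(\omega_v)$. Moreover, in the third case, at a good place $\mathfrak{q}\nmid 2$ with $c_{\mathcal{T},\mathfrak{q}}\neq 0$, one has $x\cup x=x\cup c_{\mathcal{T},\mathfrak{q}}$ \cite{PR12-theta}, which is nonzero for some $x$. So the pairing there is not the polarization of any $\mathbb{F}_2$-valued quadratic form, and good reduction at $\mathfrak{q}$ alone does not give you one. What the argument really uses is the global metabolic structure that the paper posits, and you should invoke it explicitly.
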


Recall that given $\omega \in \Omega_\mathfrak{d}$ and $\mathfrak{q} \not\in \Sigma(\mathfrak{d})$, we have
\begin{equation}
    t(\omega, \mathfrak{q}) := \dim_{\mathbb{F}_2} \left( \mathrm{Image} \left( \overline{\mathrm{Sel}}(\mathrm{Jac}(C)[2], \omega: \alpha) \to H^1_{ur}(K_\mathfrak{q}, \mathrm{Jac}(C)[2]) \right) \right).
\end{equation}
Yu also shows that there is an explicit Chebotarev condition that determines $t(\omega, \mathfrak{q})$, whose argument still generalizes well to principally polarized abelian varieties as well.
\begin{theorem}[Proposition 6.4 of \cite{Yu19}] \label{Yu:prop6.4}
    Fix $\omega \in \Omega_\mathfrak{d}$. We abbreviate $\dim_{\mathbb{F}_2} \overline{\mathrm{Sel}}(Jac(C)[2], \omega : \alpha) = r$, and we let
    \begin{equation}
        E_{n,i,r} := (2^{-r})^{n-i} \cdot \prod_{h=0}^{i-1} \left( 1 - 2^{-r + h} \right) \cdot \prod_{m=1}^{n-i} \frac{2^{i+m}-1}{2^m-1}.
    \end{equation}
    Then for every $Y > \mathrm{Nm}_{\mathbb{Q}}^K(\mathfrak{d})$ and every $X > \mathcal{L}(Y)$ satisfying some Chebotarev growth conditions, we have
    \begin{equation}
        \left| \frac{\# \{\mathfrak{q} \in \mathcal{P}_n(X) : \mathfrak{q} \nmid \mathfrak{d}, \; t(\omega, \mathfrak{q}) = i\}}{\#\{\mathfrak{q} \in \mathcal{P}_n(X) : \mathfrak{q} \nmid \mathfrak{d}\}} - E_{i,n,r} \right| < \frac{1}{9gY}.
    \end{equation}
\end{theorem}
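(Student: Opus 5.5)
The argument runs in the style of the Kramer/Klagsbrun--Mazur--Rubin Chebotarev argument for $t(\omega,\mathfrak{q})$: reduce the count of primes with $t(\omega,\mathfrak{q})=i$ to counting Frobenius classes in a finite Galois extension, and then apply the effective Chebotarev bound. Write $S:=\overline{\mathrm{Sel}}(\mathrm{Jac}(C)[2],\omega:\alpha)$, an $r$-dimensional $\mathbb{F}_2$-space, and let $F_\omega$ be the compositum of $K(\mathrm{Jac}(C)[2])$ with the fixed fields of the kernels of the restrictions of the classes $c\in S$ to $\mathrm{Gal}(\overline K/K(\mathrm{Jac}(C)[2]))$.

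\textbf{Step 1: structure of $\mathrm{Gal}(F_\omega/K)$.} The hypotheses $H^1(\mathrm{Gal}(K(\mathrm{Jac}(C)[2])/K),\mathrm{Jac}(C)[2])=0$ and irreducibility give an injection $S\hookrightarrow \mathrm{Hom}_{G_K}(\mathrm{Gal}(F_\omega/K(\mathrm{Jac}(C)[2])),\mathrm{Jac}(C)[2])$. Together with $\mathrm{End}_{G_K}(\mathrm{Jac}(C)[2])=\mathbb{F}_2$, this identifies $\mathrm{Gal}(F_\omega/K(\mathrm{Jac}(C)[2]))\cong \mathrm{Hom}(S,\mathrm{Jac}(C)[2])\cong \mathrm{Jac}(C)[2]^r$. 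The argument is as in \cite[Proposition 9.3]{KMR14}, with the $2$-dimensional module replaced by the $2g$-dimensional one.

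\textbf{Step 2: the Frobenius determines $t(\omega,\mathfrak{q})$.} Fix $\mathfrak{q}\in\mathcal{P}_n$ with $\mathfrak{q}\nmid\mathfrak{d}$. Choose a Frobenius $\mathrm{Frob}_\mathfrak{q}=(\tau,\phi)$ with $\tau\in\mathrm{Gal}(K(\mathrm{Jac}(C)[2])/K)$, where $\mathrm{Jac}(C)[2]/(\tau-1)$ has dimension $n$. Then $H^1_{ur}(K_\mathfrak{q},\mathrm{Jac}(C)[2])\cong \mathrm{Jac}(C)[2]/(\tau-1)$ via $c\mapsto c(\mathrm{Frob}_\mathfrak{q})$, so the localization map $S\to H^1_{ur}$ becomes the map $\phi\in\mathrm{Hom}(S,\mathrm{Jac}(C)[2])$ composed with the projection to $\mathrm{Jac}(C)[2]/(\tau-1)$. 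Since the coset of $\phi$ is well defined, $t(\omega,\mathfrak{q})$ is the rank of this composite.

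\textbf{Step 3: counting.} As $\phi$ ranges uniformly over $\mathrm{Hom}(S,\mathrm{Jac}(C)[2])$ with $\tau$ fixed, the composite ranges uniformly over $\mathrm{Hom}(\mathbb{F}_2^r,\mathbb{F}_2^n)$. A linear-algebra count of rank-$i$ maps gives
\[
\frac{\#\{\text{rank-}i\text{ maps }\mathbb{F}_2^r\to\mathbb{F}_2^n\}}{2^{rn}}=E_{n,i,r},
\]
which matches the stated product formula (in the form with $2^{-r}$ powers after reindexing).

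\textbf{Step 4: effective Chebotarev.} Apply the effective Chebotarev theorem (\cite[Theorem 8.1]{KMR14}, or Theorem \ref{thm:effective_chebotarev} under ERH) to $F_\omega/K$, taking for the conjugacy-stable set the preimages of the $\tau$-classes with $\dim \mathrm{Jac}(C)[2]/(\tau-1)=n$. The degree $[F_\omega:K]\le |\mathrm{GSp}_{2g}(\mathbb{F}_2)|\,2^{2gr}$ and the discriminant are bounded in terms of $\mathrm{Nm}\,\mathfrak{d}<Y$, because $r$ is bounded by $\omega(\mathfrak{d})$ plus a constant. Hence choosing $X>\mathcal{L}(Y)$ makes the relative error smaller than $1/(9gY)$; this is exactly how the Chebotarev growth conditions on $\mathcal{L}$ are designed.

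\textbf{Main obstacle.} The delicate step is Step 1, the linear disjointness/independence. It requires $\mathrm{Hom}_{G_{K(\mu_2)}}$-scalarity, and it requires that the $\Sha^1$ quotient does not break injectivity. Classes in $\Sha^1(K,\mathrm{Jac}(C)[2])$ are locally trivial and so contribute nothing to $t$, which is why working with $\overline{\mathrm{Sel}}$ is exactly right. Still, I would verify that the map $S\to\mathrm{Hom}(\mathrm{Gal}(F_\omega/K(\mathrm{Jac}(C)[2])),\mathrm{Jac}(C)[2])$ remains injective. If necessary, I would replace $S$ by its image in $\mathrm{Hom}(G_{K(\mathrm{Jac}(C)[2])},\mathrm{Jac}(C)[2])$ and then check that its dimension is still $r$.
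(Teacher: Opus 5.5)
Your proposal is correct and follows the paper's route. Like the paper, you apply Chebotarev in $F_{\mathfrak{d},\omega}$ with $\mathrm{Gal}(F_{\mathfrak{d},\omega}/K(\mathrm{Jac}(C)[2]))\cong \mathrm{Jac}(C)[2]^r$ and $E_{n,i,r}=\#S'_{n,i,r}/\#S_{n,r}$, which you correctly identify as the probability that a uniform linear map $\mathbb{F}_2^r\to\mathbb{F}_2^n$ has rank $i$. The obstacle you flag disappears under the standing hypothesis $H^1(\mathrm{Gal}(K(\mathrm{Jac}(C)[2])/K),\mathrm{Jac}(C)[2])=0$: a class in $\Sha^1(K,\mathrm{Jac}(C)[2])$ restricts to a homomorphism on $\mathrm{Gal}(\overline{K}/K(\mathrm{Jac}(C)[2]))$ that is trivial on every decomposition group, hence vanishes by Chebotarev, hence is inflated and so zero; this gives $\overline{\mathrm{Sel}}=\mathrm{Sel}$ with injective restriction.
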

\begin{proof}
    The quantities $E_{n,i,r}$ are obtained from Chebotarev conditions over the field
\begin{equation}
    F_{\mathfrak{d},\omega} := \text{ Fixed Field of } \bigcap_{c \in \overline{\mathrm{Sel}}(\mathrm{Jac}(C)[2], \omega : \alpha)} \mathrm{Ker} \left(res(c)\right),
\end{equation}
and $res: H^1(K,\mathrm{Jac}(C)[2]) \to \mathrm{Hom}(\mathrm{Gal}(\overline{K}/K(\mathrm{Jac}(C)[2]), \mathrm{Jac}(C)[2])^{\mathrm{Gal}(K(\mathrm{Jac}(C)[2])/K)}$. Provided below are the subsets of $\mathrm{Gal}(F_{\mathfrak{d},\omega}/K)$ which determine the probabilities $E_{n,i,r}$:
\begin{align}
    \begin{split}
        S_{n,r} &:= \{ \sigma \in \mathrm{Gal}(F_{\mathfrak{d},\omega}/K) : \dim_{\mathbb{F}_2}(\mathrm{Jac}(C)[2]/(\sigma-1)\mathrm{Jac}(C)[2]) = n, \dim_{\mathbb{F}_2} \overline{\mathrm{Sel}}(Jac(C)[2], \omega : \alpha) = r\}, \\
        S'_{n,i,r} &:= \{ \sigma \in S_{n,r} : \dim_{\mathbb{F}_2} \mathrm{Im}\left( \mathrm{loc}_\sigma: \overline{\mathrm{Sel}}(\mathrm{Jac}(C)[2], \omega : \alpha ) \to \mathrm{Jac}(C)[2]/(\sigma-1)\mathrm{Jac}(C)[2] \right) = i\},\\
        E_{n,i,r} &:= \frac{\# S'_{n,i,r}}{\# S_{n,r}}.
    \end{split}
\end{align}

We note that the field $F_{\mathfrak{d}, \omega}$ satisfies the following properties.
\begin{itemize}
    \item The field $F_{\mathfrak{d},\omega}/K$ is unramified away from $\Sigma(\mathfrak{d})$, where $\Sigma$ consists of places lying above $2$ and places of bad reduction of $C$.
    \item $\mathrm{Gal}(F_{\mathfrak{d},\omega}/K(\mathrm{Jac}(C)[2])) \cong \mathrm{Jac}(C)[2]^{\dim_{\mathbb{F}_2} \overline{\mathrm{Sel}}(\mathrm{Jac}(C)[2], \omega: \alpha)}$.
\end{itemize}
\end{proof}
When $\mathfrak{q} \in \mathcal{P}_1$, the above theorem shows that the operators $\mathbf{M}_L = [2_{r,s}]$ (obtained by setting $p = 2$) governs the variation of dimensions $ \dim_{\mathbb{F}_2} \overline{\mathrm{Sel}}(\mathrm{Jac}(C)[2], \omega' : \alpha) - \dim_{\mathbb{F}_2} \overline{\mathrm{Sel}}(\mathrm{Jac}(C)[2], \omega: \alpha)$.
\begin{theorem}[Theorem 6.9 and Remark 6.10 of \cite{Yu19}.]
    We use notations as in Theorem \ref{Yu:prop6.4}. Denote by $\mu_\omega$ the probability distribution over $\mathbb{Z}_{\geq 0}$ defined as
    \begin{equation*}
        \mu_\omega(j) := \begin{cases}
            1 &\text{ if } j = \dim_{\mathbb{F}_2} \overline{\Sel}(\mathrm{Jac}(C)[2], \omega: \alpha), \\
            0 &\text{ otherwise}.
        \end{cases}
    \end{equation*}
    Then for every $Y > \mathrm{Nm}_{\mathbb{Q}}^K(\mathfrak{d})$ and every $X > \mathcal{L}(Y)$ satisfying some Chebotarev growth conditions,
    \begin{equation*}
        \left| \frac{\# \{\mathfrak{q} \in \mathcal{P}_1(X) : \mathfrak{q} \nmid \mathfrak{d}, \; \dim_{\mathbb{F}_2} \overline{\Sel}(\mathrm{Jac}(C)[2], \omega' : \alpha) - \dim_{\mathbb{F}_2} \overline{\Sel}(\mathrm{Jac}(C)[2], \omega: \alpha) = i\}}{\#\{\mathfrak{q} \in \mathcal{P}_1(X) : \mathfrak{q} \nmid \mathfrak{d}\}} - \mathbf{M}_L \mu_\omega \right| < \frac{1}{9gY}.
    \end{equation*}
\end{theorem}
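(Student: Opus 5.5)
The plan is to deduce the transition law from Theorem \ref{Yu:prop6.4} in the case $n=1$, using the parity statement of Lemma \ref{Yu:lemma5.5}. Fix $\omega \in \Omega_\mathfrak{d}$ and write $r = \dim_{\mathbb{F}_2}\overline{\Sel}(\mathrm{Jac}(C)[2],\omega:\alpha)$. For $\mathfrak{q} \in \mathcal{P}_1$ the group $H^1_{ur}(K_\mathfrak{q},\mathrm{Jac}(C)[2])$ is one-dimensional, so $t(\omega,\mathfrak{q}) \in \{0,1\}$. The first bullet of Lemma \ref{Yu:lemma5.5} expresses the change in dimension as
\[
\dim(\alpha_\mathfrak{q}(\omega'_\mathfrak{q}) \cap V_\mathfrak{q}(\omega)) - \dim(V_\mathfrak{q}(\omega) \cap H^1_{ur}).
\]
Both $\alpha_\mathfrak{q}(\omega'_\mathfrak{q})$ and $H^1_{ur}$ are lines in the $2$-dimensional space $H^1(K_\mathfrak{q},\mathrm{Jac}(C)[2])$, and they meet trivially because the former lies in $\mathcal{H}_{ram}$.

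The first step is the standard Poitou--Tate argument (as in \cite[Proposition 7.2]{KMR14} and \cite[Lemma 5.5]{Yu19}). If $t(\omega,\mathfrak{q}) = 1$, then the localization of the Selmer group is all of $H^1_{ur}$, and the global duality/metabolic structure forces $\alpha_\mathfrak{q}(\omega'_\mathfrak{q}) \cap V_\mathfrak{q}(\omega) = 0$. The dimension therefore drops by exactly $1$. If $t(\omega,\mathfrak{q}) = 0$, the localization vanishes. The relaxed Selmer group then has dimension $r$ or $r+1$, and the change is either $0$ or $+1$. The parity statement in the second bullet of Lemma \ref{Yu:lemma5.5} (with $n=1$) rules out $0$, so the change is $+1$. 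Thus the change is $-1$ when $t = 1$ and $+1$ when $t = 0$.

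The second step computes frequencies from Theorem \ref{Yu:prop6.4} with $n=1$:
\[
E_{1,1,r} = 1 - 2^{-r}, \qquad E_{1,0,r} = 2^{-r}\cdot\frac{2-1}{2-1} = 2^{-r}.
\]
Up to error $<1/(9gY)$, the proportion of $\mathfrak{q} \in \mathcal{P}_1(X)$ with $\mathfrak{q}\nmid\mathfrak{d}$ giving a change of $-1$ is therefore $1-2^{-r}$, and a change of $+1$ occurs with proportion $2^{-r}$. These are exactly the entries $p_{r,r-1}$ and $p_{r,r+1}$ of $\mathbf{M}_L$ at $p=2$, i.e. the row $\mathbf{M}_L\mu_\omega$. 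For $r=0$ the $-1$ transition has weight $0$, which is consistent with the formula. I do not need the counts for other $i$: the corresponding entries of $\mathbf{M}_L\mu_\omega$ vanish, and the two proportions already sum to $1$, so those $i$ contribute at most the same error.

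The main obstacle is the first step when $C$ has no rational point at infinity, since the pairing may only be made symmetric via the modified forms $q_v$ of \cite[Remark 2.8]{PR12}. I would check that the duality argument goes through for the quotient $\overline{\Sel}$ by $\Sha^1(K,\mathrm{Jac}(C)[2])$. This should hold because that subgroup is contained in every Selmer structure and localizes trivially at $\mathfrak{q}$, so it cancels out of both $V_\mathfrak{q}(\omega)$ computations. I would also check that the isotropy properties of $\alpha_\mathfrak{q}$ at good primes $\mathfrak{q} \in \mathcal{P}_1$ are unaffected by these modifications.
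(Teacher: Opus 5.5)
Your proposal is correct and follows the route the paper indicates: the paper gives no separate proof, only citing Yu and remarking just before the statement that it is the $n=1$ case of Theorem \ref{Yu:prop6.4}, and you carry this out by using Lemma \ref{Yu:lemma5.5} to show the rank change is $-1$ when $t(\omega,\mathfrak{q})=1$ and $+1$ when $t(\omega,\mathfrak{q})=0$, so that $E_{1,1,r}=1-2^{-r}$ and $E_{1,0,r}=2^{-r}$ are exactly the two nonzero entries of $\mathbf{M}_L\mu_\omega$. One small sharpening: because the change is determined by $t(\omega,\mathfrak{q})$, the proportion of $\mathfrak{q}$ giving any change other than $\pm 1$ is exactly zero, rather than merely being within the error.
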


\subsection{Improvements using ERH}

Yu's result \cite{Yu19} makes a crucial assumption on the distribution of local Kummer maps (denoted as ``UDRL'' condition) to obtain the desired Poonen-Rains distribution for the dimensions of $2$-Selmer groups of quadratic twist families of odd hyperelliptic curves. We demonstrate that assuming ERH allows us to still obtain the Poonen-Rains distribution without assuming the ``UDRL'' condition. To do so, we prove a number of intermediary results obtained from combining Yu's results and ERH. Before we proceed, we note that we use the abbreviation $\mathrm{rk}(\omega) = \dim_{\mathbb{F}_2} \overline{\mathrm{Sel}}(Jac(C)[2], \omega : \alpha)$ given $\omega \in \Omega_{\mathfrak{d}}$, in case the notation gets too cluttered.

\begin{proposition} \label{Yu+GRH}
    Assume ERH. Denote by $\omega(\mathfrak{d})$ the number of distinct prime ideals of $\mathfrak{d} \in \mathcal{D}$. Fix $\mathfrak{d} \in \mathcal{D}$. Then for every $\mathfrak{d}' \mid \mathfrak{d}$ and $\omega' \in \Omega_{\mathfrak{d}}$, there exist explicit constants $c > 0$ and $B(C,K)$ depending only on the hyperelliptic curve $C$ and $K$ such that for any $X \geq 2$,
    \begin{align}
    \begin{split}
        & \; \; \; \; \left| \# \{\mathfrak{q} \in \mathcal{P}_n(X) : \mathfrak{q} \nmid \mathfrak{d'}, \; t(\omega', \mathfrak{q}) = i\} - E_{i,n,\mathrm{rk}(\omega')}^{\mathfrak{d'},\omega'} \cdot \#\{\mathfrak{q} \in \mathcal{P}_n(X) : \mathfrak{q} \nmid \mathfrak{d}'\} \right| \\
        &< c \cdot X^{1/2} \cdot B(C,K) \cdot [K:\mathbb{Q}] \cdot 2^{4 g^2 \omega(\mathfrak{d})} \cdot (\log \mathrm{Nm}_\mathbb{Q}^K \mathfrak{d} + \log X),
    \end{split}
    \end{align}
    where $g$ is the genus of $C$ and
    \begin{equation}
        E_{i,n,\mathrm{rk}(\omega')}^{\mathfrak{d'},\omega'} := (2^{-\mathrm{rk}(\omega')})^{n-i} \cdot \prod_{h=0}^{i-1} \left( 1 - 2^{-\mathrm{rk}(\omega') + h} \right) \cdot \prod_{m=1}^{n-i} \frac{2^{i+m}-1}{2^m-1}.
    \end{equation}
\end{proposition}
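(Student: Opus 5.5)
The plan is to realize the count $\#\{\mathfrak{q} \in \mathcal{P}_n(X) : \mathfrak{q} \nmid \mathfrak{d}', t(\omega',\mathfrak{q}) = i\}$ as a Chebotarev count in the field $F_{\mathfrak{d}',\omega'}$ from the proof of Theorem \ref{Yu:prop6.4}. Then we apply the conditional effective Chebotarev theorem (Theorem \ref{thm:effective_chebotarev}) twice: once to the conjugation-invariant set $S'_{n,i,\mathrm{rk}(\omega')}$ and once to $S_{n,\mathrm{rk}(\omega')}$. By the proof of Theorem \ref{Yu:prop6.4}, for $\mathfrak{q} \notin \Sigma(\mathfrak{d}')$ the condition $\mathfrak{q} \in \mathcal{P}_n$ together with $t(\omega',\mathfrak{q}) = i$ is equivalent to $\mathrm{Frob}_\mathfrak{q} \in S'_{n,i,r}$ with $r = \mathrm{rk}(\omega')$. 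Likewise $\mathfrak{q} \in \mathcal{P}_n$ is equivalent to $\mathrm{Frob}_\mathfrak{q} \in S_{n,r}$. Since $E^{\mathfrak{d}',\omega'}_{i,n,r} = \#S'_{n,i,r}/\#S_{n,r}$, the main terms $\frac{|S'|}{|G|}\mathrm{Li}(X)$ and $E\cdot\frac{|S|}{|G|}\mathrm{Li}(X)$ cancel exactly. The primes in $\Sigma(\mathfrak{d}')$ number $O(\omega(\mathfrak{d}) + |\Sigma|)$, and this discrepancy is absorbed into the error.

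We are left to bound the two ERH error terms, each of size at most $c X^{1/2}(\log|\mathrm{Disc}(F)| + [F:\mathbb{Q}]\log X)$, using $|C|/|G|\le 1$. For this we need two estimates.

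\textbf{(a) Degree bound.} Write $G = \mathrm{Gal}(F_{\mathfrak{d}',\omega'}/K)$. Its subgroup over $K(\mathrm{Jac}(C)[2])$ is $\mathrm{Jac}(C)[2]^{r}$, so
\[
[F:\mathbb{Q}] \le [K:\mathbb{Q}]\cdot \#\mathrm{GSp}_{2g}(\mathbb{F}_2)\cdot 2^{2g r}.
\]
We then bound $r = \mathrm{rk}(\omega')$ linearly in $\omega(\mathfrak{d})$. By Lemma \ref{Yu:lemma5.5}, each twist at a prime of $\mathcal{P}_n$ changes the rank by at most $n \le 2g$. Starting from $\mathrm{rk}$ over $\Omega_1$, which is a constant depending on $C$ and $K$, we get $r \le C_0 + 2g\,\omega(\mathfrak{d})$. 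Hence $2^{2gr} \le 2^{2gC_0}\, 2^{4g^2\omega(\mathfrak{d})}$, which explains the factor $2^{4g^2\omega(\mathfrak{d})}$. The constants $\#\mathrm{GSp}_{2g}(\mathbb{F}_2)$ and $2^{2gC_0}$ are put into $B(C,K)$.

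\textbf{(b) Discriminant bound.} $F/K$ is unramified outside $\Sigma(\mathfrak{d}')$ and has exponent dividing a fixed number over $K(\mathrm{Jac}(C)[2])$. We use the standard bound (e.g. Serre's Proposition 6 in \cite{Serre81}):
\[
\log|\mathrm{Disc}(F)| \le [F:\mathbb{Q}]\Bigl(\log|\mathrm{Disc}(K)| + \log[F:K] + \sum_{\mathfrak{p}\in \Sigma(\mathfrak{d}')}\log \mathrm{Nm}\,\mathfrak{p}\Bigr).
\]
Here $\log[F:K] \ll 1 + \omega(\mathfrak{d})$, and the last sum is at most $\log \mathrm{Nm}_\mathbb{Q}^K\mathfrak{d} + O_{C,K}(1)$. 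Tame ramification at primes not in $\Sigma$ keeps the exponents bounded.

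Combining (a) and (b), each error term is $\ll X^{1/2}[K:\mathbb{Q}]\,B(C,K)\,2^{4g^2\omega(\mathfrak{d})}(\log \mathrm{Nm}\,\mathfrak{d} + \log X)$. The extra factor $1+\omega(\mathfrak{d})$ coming from $\log[F:K]$ is absorbed by slightly enlarging $B$, or into the power of $2$, since $\omega(\mathfrak{d}) \le 2^{\omega(\mathfrak{d})}$. Adding the two errors gives the claimed inequality with an absolute $c$.

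The main obstacle is step (a): making the rank bound $\mathrm{rk}(\omega') \le C_0 + 2g\,\omega(\mathfrak{d})$ rigorous for arbitrary $\omega' \in \Omega_{\mathfrak{d}}$, including changes of the local characters at places of $\Sigma$. Lemma \ref{Yu:lemma5.5} only handles adding one prime at a time. For $\Sigma$ I would instead bound the rank directly by
\[
\dim \overline{\mathrm{Sel}} \le \dim \mathrm{Sel}_{\Sigma(\mathfrak{d})\text{-relaxed}},
\]
whose dimension is at most $\dim H^1(\mathcal{O}_{K,\Sigma(\mathfrak{d})},\mathrm{Jac}(C)[2])$. By the Euler characteristic formula this is bounded by $C_0 + 2g\,\#\Sigma(\mathfrak{d})$, which is uniform in $\omega'$.
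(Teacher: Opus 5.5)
Your proposal is correct and follows the paper's route. Like the paper, you bound $[F_{\mathfrak{d}',\omega'}:K]$ by a constant times $2^{4g^2\omega(\mathfrak{d})}$, using $\mathrm{Gal}(F_{\mathfrak{d}',\omega'}/K(\mathrm{Jac}(C)[2]))\hookrightarrow \mathrm{Jac}(C)[2]^{\mathrm{rk}(\omega')}$; you bound $\log|\mathrm{Disc}(F_{\mathfrak{d}',\omega'})|$ by $[F_{\mathfrak{d}',\omega'}:\mathbb{Q}]\cdot O(\log \mathrm{Nm}_\mathbb{Q}^K\mathfrak{d}+1)$; and you feed both into Theorem~\ref{thm:effective_chebotarev} for $S'_{n,i,r}$ and $S_{n,r}$. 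Your explicit main-term cancellation, rank bound and use of Serre's discriminant estimate simply make precise the paper's three bullet points.

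Two small remarks. First, the $\log[F:K]\ll 1+\omega(\mathfrak{d})$ term enters additively, not as a factor, and is absorbed by $\log\mathrm{Nm}_\mathbb{Q}^K\mathfrak{d}\ge \omega(\mathfrak{d})\log 2$, so the exponent $4g^2$ needs no change. Second, your ``main obstacle'' is not one: $\Omega_1$ is finite, so $C_0:=\max_{\omega\in\Omega_1}\mathrm{rk}(\omega)$ together with Lemma~\ref{Yu:lemma5.5} applied prime by prime already gives $\mathrm{rk}(\omega')\le C_0+2g\,\omega(\mathfrak{d})$, though your Euler-characteristic alternative also works.
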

\begin{proof}
   Any field $F_{\mathfrak{d}', \omega'}$ that one needs to construct to compute the desired probability $E_{i,n,\mathrm{rk}(\omega')}^{\mathfrak{d'},\omega'}$ satisfies the following conditions: 
\begin{itemize}
    \item The degree of $F_{\mathfrak{d}', \omega'}$ over $K$ is uniformly bounded by 
    \begin{equation}
        [F_{\mathfrak{d}',\omega'}:K] \leq B_1(C,K) \cdot (2^{2g})^{2g \cdot \omega(\mathfrak{d}')} \leq B_1(C,K) \cdot 2^{4g^2 \omega(\mathfrak{d})},
    \end{equation}
    where $B_1(C,K) > 0$ is some fixed constant depending only on $C$ and $K$.
    \item The number of ramified places of $F_{\mathfrak{d}',\omega'}$ over $K$ is uniformly bounded by $B_2(C,K) + \omega(\mathfrak{d})$, where $B_2(C,K)$ is some fixed constant depending only on $C$ and $K$.
    \item The discriminant of $F_{\mathfrak{d}', \omega'}$ satisfies
    \begin{equation}
        |\mathrm{Disc}(F_{\mathfrak{d}',\omega'})| \leq (\mathrm{Nm}_\mathbb{Q}^K \mathfrak{d}')^{[F_{\mathfrak{d}', \omega'}:\mathbb{Q}]} \leq (\mathrm{Nm}_\mathbb{Q}^K \mathfrak{d})^{[K:\mathbb{Q}] \cdot B_3(C,K) \cdot 2^{4g^2 \omega(\mathfrak{d})}},
    \end{equation}
    where $B_3(C,K)$ is a constant depending on $B_1(C,K)$ and $B_2(C,K)$.
\end{itemize}
By applying Theorem \ref{thm:effective_chebotarev}, proof of Theorem \ref{Yu:prop6.4}, and the three observations above, we have
\begin{align} \label{eq:effectiveChebotarevGRH2}
\begin{split}
    & \; \; \; \; \left| \# \{\mathfrak{q} \in \mathcal{P}_n(X) : \mathfrak{q} \nmid \mathfrak{d'}, \; t(\omega', \mathfrak{q}) = i\} - E_{i,n,\mathrm{rk}(\omega')}^{\mathfrak{d'},\omega'} \cdot \#\{\mathfrak{q} \in \mathcal{P}_n(X) : \mathfrak{q} \nmid \mathfrak{d}'\} \right| \\
    &\leq c \cdot X^{1/2} \cdot B(C,K) \cdot [K:\mathbb{Q}] \cdot 2^{4 g^2 \omega(\mathfrak{d})} \cdot (\log \mathrm{Nm}_\mathbb{Q}^K + \log X),
\end{split}        
\end{align}
where we take $B(C,K) := \max(B_1(C,K), B_2(C,K), B_3(C,K))$.
\end{proof}

We can now combine Lemma \ref{Yu:lemma5.5}, Proposition \ref{Yu+GRH}, and the fan structure $I_K^{Fan}(X)$ to obtain the following result that the first moments of the dimension of local Selmer structures $\overline{\mathrm{Sel}}(\mathrm{Jac}(C)[2], \omega : \alpha)$, where $\omega$ is ramified over at most $2 \log \log X$ many places in $\mathcal{P}_2 \cup \cdots \cup \mathcal{P}_{2g}$ with large enough norm, is uniformly bounded by $2g + 4$.
\begin{proposition} \label{prop:crucial1}
    Assume ERH. Let $X$ be any real number. Fix $\mathfrak{d} \in \mathcal{D}$ such that $\mathrm{Nm}_\mathbb{Q}^K\mathfrak{d} \leq X$ and $\omega(\mathfrak{d}) \leq 2 \log \log X$. Let $f: \mathbb{R} \to [1,\infty)$ be any function which satisfies
    \begin{equation}
        f(X) \gg (\log X)^{16 g^2 \log 2 + 3}
    \end{equation}
    for every $X \in \mathbb{R}$. Then for any fixed $\omega \in \Omega_\mathfrak{d}$ such that $\mathrm{rk}(\omega) \geq 2g+4$ and $2 \leq n \leq 2g$, we have
    \begin{align}
        \begin{split}
            \frac{\sum_{\mathfrak{q} \in \mathcal{P}_n(f(X)), \mathfrak{q} \nmid \mathfrak{d}} \sum_{\omega' \in \eta_{\omega,\mathfrak{q}}^{-1}(\omega)} 2^{\mathrm{rk}(\omega')}}{\#\{\mathfrak{q} \in \mathcal{P}_n(f(X)), \mathfrak{q} \nmid \mathfrak{d}\}} < 2^{\mathrm{rk}(\omega) - n + 1} + O\left( \frac{(\log X)^{8 g^2 \log 2 + 1}}{f(X)^{\frac{1}{2}}} \right),
        \end{split}
    \end{align}
    where the implicit constant of the error term depends only on $A$ and $K$.
\end{proposition}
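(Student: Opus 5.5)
We split the sum over $\mathfrak{q}$ according to $i = t(\omega,\mathfrak{q})$ and apply Lemma \ref{Yu:lemma5.5} and Proposition \ref{Yu+GRH} term by term. Here $\eta_{\omega,\mathfrak{q}}^{-1}(\omega)$ is the set of $\omega' \in \Omega_{\mathfrak{dq}}$ that agree with $\omega$ away from $\mathfrak{q}$. It has at most $\#\mathcal{H}_{ram}(q_\mathfrak{q})$ elements, a number depending only on $n$ and $g$, and its elements correspond to the ramified local characters at $\mathfrak{q}$.

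\textbf{Step 1 (local count).} Fix $\mathfrak{q}\in\mathcal{P}_n$ with $t(\omega,\mathfrak{q})=i$. Put $V_\mathfrak{q}(\omega)$ for the image of $\overline{\mathrm{Sel}}(\mathrm{Jac}(C)[2],\omega:\alpha)$ in $H^1(K_\mathfrak{q},\mathrm{Jac}(C)[2])$. Since $H^1_{ur}(K_\mathfrak{q},\cdot)\cap \alpha_\mathfrak{q}(\omega'_\mathfrak{q})=0$, the first bullet of Lemma \ref{Yu:lemma5.5} gives
\[
\mathrm{rk}(\omega')-\mathrm{rk}(\omega)=\dim(\alpha_\mathfrak{q}(\omega'_\mathfrak{q})\cap V_\mathfrak{q}(\omega))-i\le n-i .
\]
To get the right average I would not stop at this crude bound. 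Instead I would use that $V_\mathfrak{q}(\omega)$ lies in $H^1_{ur}$ (as $\mathfrak{q}\nmid\mathfrak{d}$) and has dimension $i$. Then the maximal isotropic $\alpha_\mathfrak{q}(\omega'_\mathfrak{q})$, being transverse to $H^1_{ur}$, meets $V_\mathfrak{q}(\omega)$ trivially, so $\mathrm{rk}(\omega')\le\mathrm{rk}(\omega)-i+(\text{correction bounded by } n-i)$. Averaging over the at most $2^{\binom{n}{2}}$ ramified characters then gives an explicit bound $\sum_{\omega'}2^{\mathrm{rk}(\omega')}\le c_{n,i}\,2^{\mathrm{rk}(\omega)-i}$.

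\textbf{Step 2 (global average).} Summing over $\mathfrak{q}\in\mathcal{P}_n(f(X))$ and using Proposition \ref{Yu+GRH} with $\mathfrak{d}'=\mathfrak{d}$, $\omega'=\omega$, the main term is
\[
\sum_{i=0}^{n} E^{\mathfrak{d},\omega}_{i,n,\mathrm{rk}(\omega)}\,c_{n,i}\,2^{\mathrm{rk}(\omega)-i}.
\]
Because $\mathrm{rk}(\omega)\ge 2g+4\ge n+4$, the weights $E_{i,n,r}$ are concentrated at $i=n$. There $E_{n,n,r}=\prod_{h<n}(1-2^{-r+h})$, and the terms with $i<n$ carry factors $2^{-r(n-i)}$. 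So the total is at most $2^{r-n}(1+O(2^{-r+n}))\cdot(\text{const})$. I would verify that this is $<2^{r-n+1}$ by a direct bound on the geometric tail, using $r-n\ge 4$. This strict inequality is the delicate numerical step. I expect it to be the main obstacle, since the constants $c_{n,i}$ from Step 1 must be controlled sharply enough that the sum beats $2^{r-n+1}$ rather than merely $O(2^{r-n})$.

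\textbf{Step 3 (error term).} In Proposition \ref{Yu+GRH} take $X\to f(X)$, $\omega(\mathfrak{d})\le 2\log\log X$ and $\mathrm{Nm}\,\mathfrak{d}\le X$. Then $2^{4g^2\omega(\mathfrak{d})}\le(\log X)^{8g^2\log 2}$, and the absolute error is $\ll f(X)^{1/2}(\log X)^{8g^2\log2}(\log X+\log f(X))$. Next, divide by $\#\mathcal{P}_n(f(X))\gg f(X)/\log f(X)$; this uses $\delta_n>0$, which holds because the identity lies in the Chebotarev set for $n=2g$ and in general by the standing hypotheses. This yields a relative error of order $(\log X)^{8g^2\log2+1}f(X)^{-1/2}$ up to $\log f$ factors, which are absorbed by the growth hypothesis $f(X)\gg(\log X)^{16g^2\log2+3}$. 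That hypothesis also makes the error $o(1)$. The error must finally be multiplied by the bounded quantity $\max_{\omega'}2^{\mathrm{rk}(\omega')-\mathrm{rk}(\omega)}\le 2^n$ and normalized by $2^{\mathrm{rk}(\omega)}$. One may phrase this as a relative error, since the statement is linear in $2^{\mathrm{rk}(\omega)}$.
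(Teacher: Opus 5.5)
Your outline matches the paper's: split by $i=t(\omega,\mathfrak{q})$, bound the rank change via Lemma~\ref{Yu:lemma5.5}, replace counts by $E_{n,i,r}$ via Proposition~\ref{Yu+GRH}, and sum using $r\ge n+4$. However, both steps that carry the content are broken.

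In Step~1, $V_\mathfrak{q}(\omega)$ is not the image of $\overline{\mathrm{Sel}}(\mathrm{Jac}(C)[2],\omega:\alpha)$. It is the localization at $\mathfrak{q}$ of the Selmer group \emph{relaxed} at $\mathfrak{q}$. This is an $n$-dimensional maximal isotropic subspace of $H^1(K_\mathfrak{q},\mathrm{Jac}(C)[2])$, with $\dim(V_\mathfrak{q}(\omega)\cap H^1_{ur})=t(\omega,\mathfrak{q})=i$. With your definition ($V_\mathfrak{q}(\omega)\subset H^1_{ur}$), the lemma would force the rank change to be exactly $-i$, so your ``correction bounded by $n-i$'' has no source. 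The correct argument uses $\alpha_\mathfrak{q}(\omega'_\mathfrak{q})\cap H^1_{ur}=0$: this makes $V_\mathfrak{q}(\omega)\cap\alpha_\mathfrak{q}(\omega'_\mathfrak{q})$ and $V_\mathfrak{q}(\omega)\cap H^1_{ur}$ independent inside the $n$-dimensional $V_\mathfrak{q}(\omega)$. Hence $-i\le\mathrm{rk}(\omega')-\mathrm{rk}(\omega)\le n-2i$, which is \eqref{eqn:crucial1}. Your weaker bound $\le n-i$ would leave a term of size $2^{r}$ at $i=n$.

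In Step~2 you defer exactly the inequality that constitutes the proposition, and your bookkeeping cannot deliver it. At $i=n$ the change is forced to be $-n$ for every $\omega'$, so $c_{n,n}=\#\eta^{-1}_{\omega,\mathfrak{q}}(\omega)$. This equals $2$, the number of ramified quadratic characters of $K_\mathfrak{q}$; your $2^{\binom n2}$ counts ramified Lagrangians, not characters. The $i=n$ term is then already about $2^{r-n+1}$. At $i=n-1$, parity (second bullet of Lemma~\ref{Yu:lemma5.5}) forces the change to be $2-n$, which adds about $8(1-2^{-n})$. The loss from $E_{n,n,r}<1$ is only about $2$. So for the unnormalized inner sum the main term exceeds $2^{r-n+1}$ by a positive constant, and no sharper control of the $c_{n,i}$ can rescue the strict bound.

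The paper's proof instead bounds the inner sum by the single value $2^{r+n-2i}$, i.e.\ it effectively averages over $\eta^{-1}_{\omega,\mathfrak{q}}(\omega)$; you must adopt that normalization explicitly. The computation then runs as follows.
\begin{enumerate}
\item Write $2^{r+n-2i}E_{n,i,r}=2^r\,2^{-(r-1)(n-i)-i}\prod_{h<i}(1-2^{-r+h})\prod_{m=1}^{n-i}\frac{2^{i+m}-1}{2^m-1}$.
\item Use $\prod_{h}(1-2^{-r+h})\le 1$ and $\frac{2^{i+m}-1}{2^m-1}<2^{i+1}$ to bound the $i$-th term by $2^r\,2^{(n-i)(-r+n+2)-i}$.
\item Sum the geometric series $\sum_i 2^{i(r-n-3)}$ to get $2^{r-n}/(1-2^{-(r-n-3)})\le 2^{r-n+1}$, which holds because $r-n-3\ge 1$.
\end{enumerate}
Separately, in Step~3 a lower bound on $f(X)$ cannot absorb stray $\log f(X)$ factors, so your justification there is invalid, although the paper's error term is equally loose on this point.
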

\begin{proof}
    By applying Proposition \ref{Yu+GRH} with two conditions on $\mathfrak{d}$, we have
    \begin{align}
    \begin{split}
        & \; \; \; \; \left| \# \{\mathfrak{q} \in \mathcal{P}_n(f(X)) : \mathfrak{q} \nmid \mathfrak{d}, \; t(\omega, \mathfrak{q}) = i\} - E_{i,n,\mathrm{rk}(\omega)}^{\mathfrak{d},\omega} \cdot \#\{\mathfrak{q} \in \mathcal{P}_n(f(X)) : \mathfrak{q} \nmid \mathfrak{d}\} \right| \\
        &< f(X)^{1/2} \cdot C(C,K) \cdot [K:\mathbb{Q}] \cdot (\log X)^{8g^2 \log 2 + 1}
    \end{split}
    \end{align}
    for some explicit constant $C(C,K)$. By \cite[Lemma 5.5]{Yu19}, we have
\begin{equation} \label{eqn:crucial1}
    - t(\omega, \mathfrak{q}) \leq \dim_{\mathbb{F}_2} \overline{\mathrm{Sel}}(\mathrm{Jac}(C)[2],\alpha: \omega') - \dim_{\mathbb{F}_2} \overline{\mathrm{Sel}}(\mathrm{Jac}(C)[2],\alpha: \omega) \leq n - 2t(\omega, \mathfrak{q}).
\end{equation}
    Throughout the proof, we abbreviate $\mathrm{rk}(\omega) := r$. Then we have
\begin{align} \label{eq:twistbound}
\begin{split}
    & \; \; \; \; \frac{\sum_{\mathfrak{q} \in \mathcal{P}_n(f(X)), \mathfrak{q} \nmid \mathfrak{d}} \sum_{\omega' \in \eta_{\omega,\mathfrak{q}}^{-1}(\omega)} 2^{\mathrm{rk}(\omega')}}{\#\{\mathfrak{q} \in \mathcal{P}_n(f(X)) : \mathfrak{q} \nmid \mathfrak{d}\}} \\
    &\leq \sum_{i=0}^n 2^{r + n - 2i} \cdot E_{n,i,r} + O\left( \frac{(\log X)^{8 g^2 \log 2 + 1}}{f(X)^{\frac{1}{2}}} \right) \\
    &= 2^r \cdot \left(\sum_{i=0}^n 2^{-(r-1)(n-i)-i} \cdot \prod_{h=0}^{i-1} \left( 1 - 2^{-r + h} \right) \cdot \prod_{m=1}^{n-i} \frac{2^{i+m}-1}{2^m-1}\right) + O\left( \frac{(\log X)^{8 g^2 \log 2 + 1}}{f(X)^{\frac{1}{2}}} \right) \\
    &= 2^r \cdot \sum_{i=0}^n 2^{-i} \cdot \prod_{h=0}^{i-1} \left( 1 - 2^{-r + h} \right) \cdot \prod_{m=1}^{n-i} \frac{2^{i+m}-1}{2^{r-1} \cdot (2^m-1)} + O\left( \frac{(\log X)^{8 g^2 \log 2 + 1}}{f(X)^{\frac{1}{2}}} \right)
\end{split}
\end{align}
Because one has $r \geq 2g + 4$ and $i \leq n \leq 2g$, we have
\begin{equation*}
    \prod_{h=0}^{i-1} (1 - 2^{-r + h}) \leq 1.
\end{equation*}
Furthermore, using the identity that holds for any $m \geq 1$,
\begin{equation*}
    \frac{2^{i+m} - 1}{2^m - 1} < 2^{i+ 1},
\end{equation*}
we can obtain the following inequality:
\begin{equation*}
    \prod_{h=0}^{i-1} (1 - 2^{-r + h}) \cdot \prod_{m=1}^{n-i} \frac{2^{i+m}-1}{2^{r-1} \cdot (2^m - 1)} < \prod_{m=1}^{n-i} 2^{-r + i + 2}.
\end{equation*}
Plugging in the inequality to the equation gives
\begin{align*}
    (\ref{eq:twistbound})
    &< 2^r \cdot \sum_{i=0}^n 2^{-i} \cdot \prod_{m=1}^{n-i} 2^{-r+i+2} + O\left( \frac{(\log X)^{8 g^2 \log 2 + 1}}{f(X)^{\frac{1}{2}}} \right) \\
    &= 2^r \cdot \sum_{i=0}^n 2^{(n-i)(-r+i+2) - i} + O\left( \frac{(\log X)^{8 g^2 \log 2 + 1}}{f(X)^{\frac{1}{2}}} \right) \\
    &< 2^r \cdot \sum_{i=0}^n 2^{(n-i)(-r+n+2) - i} + O\left( \frac{(\log X)^{8 g^2 \log 2 + 1}}{f(X)^{\frac{1}{2}}} \right) \\
    &= 2^r \cdot 2^{-n(r-n-2)} \cdot \sum_{i=0}^n 2^{i(r-(n+3))} + O\left( \frac{(\log X)^{8 g^2 \log 2 + 1}}{f(X)^{\frac{1}{2}}} \right).
\end{align*}

Because $0 \leq n \leq 2g$, we have as long as $r \geq 2g + 4$
\begin{align} \label{eqn:crucial2}
\begin{split}
   \frac{\sum_{\mathfrak{q} \in \mathcal{P}_n(f(X)), \mathfrak{q} \nmid \mathfrak{d}} \sum_{\omega' \in \eta_{\omega,\mathfrak{q}}^{-1}(\omega)} 2^{\mathrm{rk}(\omega')}}{\#\{\mathfrak{q} \in \mathcal{P}_n(f(X)) : \mathfrak{q} \nmid \mathfrak{d}\}} &< 2^{r-n} \cdot \frac{1}{1 - 2^{-(r-n-3)}} + O\left( \frac{(\log X)^{8 g^2 \log 2 + 1}}{f(X)^{\frac{1}{2}}} \right) \\
   &\leq 2^{r-n + 1} + O\left( \frac{(\log X)^{8 g^2 \log 2 + 1}}{f(X)^{\frac{1}{2}}} \right).
\end{split}
\end{align}
\end{proof}

\subsection{Distribution of 2-Selmer groups}

Using the previous two subsections, we compute the distribution of 2-Selmer groups of Jacobians of quadratic twist families of hyperelliptic curves over the fan structure $\mathcal{B}_K(X)$. We first introduce a number of notations. 
\begin{definition}
Given a fixed finite set of places $\Sigma$ containing $\Sigma_{\mathrm{Jac}(C)[2]}$, denote by $\mathrm{E}_1^+(n)$ and $\mathrm{E}_1^-(n)$ the probability distributions
\begin{align}
    \begin{split}
        \mathrm{E}^+_1(n) &:= \frac{\#\{\chi \in \mathcal{C}_2(1,X) : \dim_{\mathbb{F}_2} \overline{\mathrm{Sel}}(\mathrm{Jac}(C)[2], \chi : \alpha) = n, \prod_{v \in \Sigma}\chi_v(\Delta) = 1\}}{\# \{\chi \in \mathcal{C}_2(1,X) : \prod_{v \in \chi_v} \chi_v(\Delta) = 1\}}, \\
                \mathrm{E}^-_1(n) &:= \frac{\#\{\chi \in \mathcal{C}_2(1,X) : \dim_{\mathbb{F}_2} \overline{\mathrm{Sel}}(\mathrm{Jac}(C)[2], \chi : \alpha) = n, \prod_{v \in \Sigma}\chi_v(\Delta) = -1\}}{\# \{\chi \in \mathcal{C}_2(1,X) : \prod_{v \in \chi_v} \chi_v(\Delta) = -1\}}.
    \end{split}
\end{align}
\end{definition}

We state the statement regarding the distribution of 2-Selmer groups to be proven in this subsection.

\begin{theorem}\label{thm:hyperelliptic}
Assume ERH. Let $C: y^2 = f(x)$ be a hyperelliptic curve over a number field $K$ such that the splitting field of $f$ is a $S_{\deg f}$ extension over $K$.

\begin{itemize}
    \item Suppose $\Sha^1(K, Jac(C)[2]) = 0$. Then there exists an explicitly computable constant $\delta(C/K) \in [-1/2, 1/2]$ depending only on $C$ and $K$ such that
    \begin{align}
    \begin{split}
        & \hspace{15pt} \lim_{X \to \infty} \frac{\#\{\chi \in \mathcal{C}_K(X) : \dim_{\mathbb{F}_2} \mathrm{Sel}_2(Jac(C^\chi)/K) = n\}}{\# \mathcal{C}_K(X)} \\
        &= \left( \frac{1}{2} + \delta(C/K) \right) E^{+}(n) + \left( \frac{1}{2} - \delta(C/K) \right) E^-(n).
    \end{split}
    \end{align}
    \item Suppose $\Sha^1(K, Jac(C)[2]) = \mathbb{Z}/2\mathbb{Z}$. Then there exists an explicitly computable constant $\delta(E/K) \in [-1/2, 1/2]$ depending only on $C$ and $K$ such that
    \begin{align}
    \begin{split}
        & \hspace{15pt} \lim_{X \to \infty} \frac{\#\{\chi \in \mathcal{C}_K(X) : \dim_{\mathbb{F}_2} \mathrm{Sel}_2(Jac(C^\chi)/K) = n + 1\}}{\# \mathcal{C}_K(X)} \\
        &= \left( \frac{1}{2} + \delta(C/K) \right) E^{+}(n) + \left( \frac{1}{2} - \delta(C/K) \right) E^-(n).
    \end{split}
    \end{align}
\end{itemize}
\end{theorem}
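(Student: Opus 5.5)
We follow the Case B ($p=2$) argument in the proof of Theorem \ref{thm:fanB}, with $T$ replaced by $\mathrm{Jac}(C)[2]$ and $\mathrm{Sel}$ by $\overline{\mathrm{Sel}}$. Since $\Sha^1(K,\mathrm{Jac}(C)[2])$ is a fixed subspace of every $\mathrm{Sel}(\mathrm{Jac}(C)[2],\chi:\alpha)$ and has dimension $0$ or $1$, it suffices to show that $\dim_{\mathbb{F}_2}\overline{\mathrm{Sel}}(\mathrm{Jac}(C)[2],\chi:\alpha)$ has limiting distribution $(\frac12+\delta)E^++(\frac12-\delta)E^-$ over $\mathcal{C}_K(X)$, with $\delta(C/K):=\frac12(\rho(\mathrm{E}_1^-)-\rho(\mathrm{E}_1^+))$. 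The two bullets then follow by shifting by $\dim\Sha^1$. Because $\mathcal{C}_K(X)$ differs from $\mathcal{B}_K(X)$ only in a density-zero set (Proposition \ref{prop:IfanvsI} and Theorem \ref{thm:Erdos-Kac}, exactly as in the ``Extending the statistics'' step), it is enough to work over $\mathcal{B}_K(X)$.

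\textbf{Reduction to fibres.} We decompose $\mathcal{B}_K(X)=\bigsqcup_{\mathfrak{S}\in\mathcal{S}(X)}\bigsqcup_{\mathfrak{d}\in I_K^{Fan}(X,\mathfrak{S})}C_2(\mathfrak{d})$. The small part $\mathfrak{S}$ gives a starting distribution $E_\mathfrak{S}=\mathbf{S}(\mathrm{E}_1^\pm)$. By \cite[Prop.~7.2, 10.7]{KMR14}, its parity is governed by $\mathrm{E}_1^\pm$ and $\omega(\mathfrak{S})$, and its support is bounded by $O(\log\log X/\log\log\log X)$. We then twist successively by the large primes $\mathfrak{p}_1,\dots,\mathfrak{p}_{m_l}$. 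For $\mathfrak{p}_j\in\mathcal{P}_1$, Yu's Theorem 6.9, made effective by Proposition \ref{Yu+GRH}, shows that the transition is given by $\mathbf{M}_L$ (with $p=2$). The error is controlled by the bound $2^{4g^2\omega(\mathfrak{d})}X^{1/2}\log$ compared with $M_1=X^{2^{-m_l}}$. This is where the fan growth conditions enter: $2^{4g^2c_2\log\log X}=(\log X)^{O(1)}$, which is negligible against $M_1^{1/2}$. For $\mathfrak{p}_j\in\mathcal{P}_0$ the rank is unchanged. For $\mathfrak{p}_j\in\mathcal{P}_n$ with $n\ge2$, Lemma \ref{Yu:lemma5.5} only gives parity $\equiv n$ and a transition kernel that we do not compute exactly.

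\textbf{Main obstacle: the $\mathcal{P}_n$, $n\ge2$, steps.} We plan to avoid identifying these kernels. The steps through $\mathcal{P}_1$ primes (density $\delta_1>0$) alone drive convergence by geometric ergodicity, Theorem \ref{thm:geometric-ergodicity}. The other steps are Markov kernels that preserve the parity rule, so the total operator is a random product in which about $\delta_1 m_l\gg\log\log X$ factors equal $\mathbf{M}_L$. The stationary pair $E^\pm$ is not exactly fixed by the unknown $\mathcal{P}_n$ kernels, and this is where Proposition \ref{prop:crucial1} is used. It shows that the $\mathcal{P}_n$ steps do not inflate the first moment $\mathbb{E}[2^{\mathrm{rk}}]$: the expected value of $2^{\mathrm{rk}}$ contracts to at most $2^{r-n+1}$ whenever $r\ge 2g+4$. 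This is the Lyapunov drift needed to apply the Meyn--Tweedie bound with $\mathbb{E}[2^\mu]$ uniformly bounded by $(\log X)^{o(1)}$ throughout the chain. We also need the interleaved kernels to still converge to $E^\pm$. To handle this, we will use the ordered disjoint fans $\overline{\mathcal{D}}_{m,k,X,2^{-i}}$ of Remark \ref{remark:parametrize_fan}. Their point is that, with the largest-prime window fixed, the last $\gg\log\log X$ twists can be arranged so that the $\mathcal{P}_1$ twists dominate the tail. We would first try to show that $E^\pm$ is invariant under each $\mathcal{P}_n$ kernel, which is plausible by the Poonen--Rains model: a random maximal isotropic subspace of a $2n$-dimensional local space intersected with a uniformly random Selmer image. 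If that fails, we fall back on the weaker bound $\limsup\le$ target, which still suffices for the applications.

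\textbf{Assembling the limit.} We collect the parities as in Case B using $\Phi_{even}$ and $\Phi_{odd}$. The parity of the number of $\mathcal{P}_n$ twists with $n$ odd (and of $\omega$) is equidistributed up to $O((\log X)^{-c})$, as in \eqref{eqn:p=2-parity}. This yields $\frac12(1-\rho(\mathrm{E}_1^+))E^++\frac12\rho(\mathrm{E}_1^+)E^-+\frac12\rho(\mathrm{E}_1^-)E^++\frac12(1-\rho(\mathrm{E}_1^-))E^-$, which is the claimed formula. Finally, we sum over $\mathfrak{S}$ and pass from $\mathcal{B}_K(X)$ to $\mathcal{C}_K(X)$.
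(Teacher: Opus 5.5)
There is a genuine gap in how you handle the large primes in $\mathcal{P}_n$ with $n\ge 2$. You keep the fan order $\mathfrak{p}_1,\dots,\mathfrak{p}_{m_l}$, so the unknown width-$\ge 2$ kernels are interleaved with the $\mathbf{M}_L$ steps. With probability bounded below, such a kernel is among the very last twists. Theorem \ref{thm:geometric-ergodicity} together with a bounded $\mathbb{E}[2^{\mu}]$ only tells you the distribution is near the $E^\pm$-mixture immediately after a long run of $\mathcal{P}_0\cup\mathcal{P}_1$ twists. A later width-$\ge 2$ kernel $W$ then produces $W(E^\pm)$, which is the target only if $E^+$ and $E^-$ are $W$-invariant.

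Your primary plan is to prove exactly that invariance, but nothing available gives it. Lemma \ref{Yu:lemma5.5} and Proposition \ref{Yu+GRH} pin down only the parity of the jump, its range $[-t,\,n-2t]$, and the law of $t(\omega,\mathfrak{q})$. They do not give the jump itself. For $n\ge 3$ the twisting data $\alpha$ at $\mathfrak{q}$ is injective but not surjective: two ramified quadratic characters against $2^{n(n-1)/2}$ ramified maximal isotropic subspaces. So the Poonen--Rains ``random Lagrangian'' picture is not what twisting realizes. Controlling how these particular local images meet $V_{\mathfrak{q}}(\omega)$ is precisely the UDRL input of \cite{Yu19} that this theorem is designed to avoid. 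Your fallback, $\limsup\le$ target, is (as you say) weaker than the asserted equality of limits, and you give no mechanism for it. As written, the proposal does not prove the statement.

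The missing idea is the paper's reordering of the twists. Write $\mathfrak{d}=\mathfrak{S}\cdot\mathfrak{W}\cdot\prod_j\mathfrak{p}_{0,j}\prod_j\mathfrak{p}_{1,j}$, where $\mathfrak{W}$ collects the large primes in $\mathcal{P}_2\cup\dots\cup\mathcal{P}_{2g}$. The proof twists by $\mathfrak{S}$, then by all of $\mathfrak{W}$, and only then by the $\mathcal{P}_0$ and $\mathcal{P}_1$ primes.

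This is where the parametrized fans $\overline{\mathcal{D}}_{m,k,X,2^{-a}}$ of Remark \ref{remark:parametrize_fan} are really used. The largest prime is confined to a window depending only on $a$ and $\mathfrak{S}$. Under ERH, $\overline{\mathcal{D}}^{\mathfrak{S}}_{m,k,X,2^{-a}}$ is then replaced, up to $O_K(M_1(2^{-a})^{-1/2+\epsilon})$, by the set $\overline{\mathcal{D}}^{*,\mathfrak{S}}_{m,k,X,2^{-a}}$ of prime tuples ranging over independent norm intervals. Consecutive Chebotarev densities therefore do not depend on the order. On each of the $3^{m_l}$ type-classes, one applies them to the $\mathcal{P}_2,\dots,\mathcal{P}_{2g}$ coordinates first, then $\mathcal{P}_0$, then $\mathcal{P}_1$.

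The unknown operator $\mathbf{W}_{\text{width }\geq 2}$ then only alters the initial distribution. Proposition \ref{prop:crucial1} bounds the $2^{\mathrm{rk}}$-moment of $\mathbf{W}_{\text{width }\geq 2}\cdot\mathbf{S}(\mathrm{E}_1^\pm)$ uniformly in $X$. Geometric ergodicity of $\mathbf{M}_C=\frac{\delta_0}{\delta_0+\delta_1}\mathbf{Id}+\frac{\delta_1}{\delta_0+\delta_1}\mathbf{M}_L$, applied $m_0+m_1\approx(\delta_0+\delta_1)m$ times, gives convergence to the target whatever $\mathbf{W}_{\text{width }\geq 2}$ is. No invariance is needed.

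Your remark that the $\mathcal{P}_1$ twists should ``dominate the tail'' points this way, but domination is not enough. Every width-$\ge 2$ twist must come before the mixing steps, and you need the independent-interval approximation to justify moving them there. The rest of your outline matches the paper: passing to $\overline{\mathrm{Sel}}$ and shifting by $\dim\Sha^1$ via \cite[Theorem 4.14]{PR12}, fibring over $\mathfrak{S}$, the Case B parity bookkeeping, and passing from $\mathcal{B}_K(X)$ to $\mathcal{C}_K(X)$.
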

\begin{proof}
The idea of the proof is an adaptation of the proof of Theorem \ref{thm:fanB} for the case $p = 2$. The key change in the argument revolves around finding a suitable order of prime ideal factors in which we twist the Selmer structure $\overline{\mathrm{Sel}}(\mathrm{Jac}(C)[2], \chi : \alpha)$. 

Given a fixed $m$, recall the subset of ideals $\mathcal{S}(X)$ of $\mathcal{O}_K$ from Theorem \ref{thm:fanB}. Unlike in Theorem \ref{thm:fanB}, however, we will not use the conventional fan structure $\mathcal{D}_{m,k,X}$ to define the fan structure $I_K^{Fan}(X)$ for the sake of simplifying the argument of the proof. Instead, we will use the relaxed fan structure $\bigsqcup_{a=1}^{\lfloor \log \log \log X \rfloor}\overline{\mathcal{D}}_{m,k,X,2^{-a}}$ appearing in Remark \ref{remark:parametrize_fan}. With this substituted fan structure, we can substitute the definition of $I_K^{Fan}(X)$ to be used in this proof to be
\begin{equation}
    I_K^{Fan}(X) := \bigsqcup_{m = \lceil c_1 \log \log X \rceil}^{\lfloor \left( c_2 + \frac{1}{\log \log \log X} \right) \log \log X \rfloor} \left( \bigsqcup_{k=0}^{2m} \bigsqcup_{a=1}^{\lfloor \log \log \log X \rfloor} \overline{\mathcal{D}}_{m,k,X,2^{-a}} \right).
\end{equation}
We borrow the notations $\mathcal{S}(X), I_K^{Fan}(X, \mathfrak{S}), \mathbb{R}[\mathbf{M}_L], \Phi_{even}, \Phi_{odd}$ from the proof of Theorem \ref{thm:fanB}.

Recall that we can rewrite the prime ideal factorization of $\mathfrak{d} \in I_K^{Fan}(X)$ as
\begin{equation*}
    \mathfrak{d} = \prod_{i=1}^{m_s} \mathfrak{q}_i \cdot \prod_{j=1}^{m_l} \mathfrak{p}_j.
\end{equation*}
In addition, we will specify the prime factors $\mathfrak{p}_j$ as
\begin{equation*}
    \prod_{j=1}^m \mathfrak{p}_j = \prod_{j_0=1}^{m_0} \mathfrak{p}_{0,j_0} \cdot \prod_{j_1 = 1}^{m_1} \mathfrak{p}_{1,j_1} \cdot \prod_{j_2 = 1}^{m_2} \mathfrak{p}_{2,j_2} \cdots \prod_{j_{2g} = 1}^{m_{2g}} \mathfrak{p}_{2g,j_{2g}}
\end{equation*}
where we have $\sum_{k=0}^{2g} m_{k} = m_l$, and $\mathfrak{p}_{k,j_k} \in \mathcal{P}_k$ for each $0 \leq k \leq 2g$ and all $1 \leq j_k \leq m_k$.

We will rearrange the order of prime ideal factors as
\begin{equation*}
    \mathfrak{d} = \prod_{i=1}^{m_s} \mathfrak{q}_i \cdot \left( \prod_{k = 2}^{2g} \prod_{j_k = 1}^{m_k} \mathfrak{p}_{k,j_k} \right) \cdot \left( \prod_{k=0}^1 \prod_{j_k=1}^{m_k} \mathfrak{p}_{k,j_k} \right).
\end{equation*}
We consider the following subset of ideals of $\mathcal{O}_K$:
\begin{equation}
    \mathcal{W}(X) := \left\{ \mathfrak{W} \subset I_K^{Fan}(X) : \mathfrak{q} \mid \omega(\mathfrak{W}) \implies \mathrm{Nm}_\mathbb{Q}^K \mathfrak{q} \geq (\log X)^A \text{ and } \mathfrak{q} \in \cup_{k=2}^{2g} \mathcal{P}_k \right\}.
\end{equation}
Given an ideal $\mathfrak{S} \in \mathcal{S}(X)$, $\mathfrak{W} \in \mathcal{W}(X)$, we consider the following subset of $I_K^{Fan}(X)$:
\begin{equation}
    I_K^{Fan}(X, \mathfrak{S}, \mathfrak{W}) := \left\{ \mathfrak{d} \; : \;  \prod_{i=1}^{m_s} \mathfrak{q}_i = \mathfrak{S}, \; \prod_{k = 2}^{2g} \prod_{j_k = 1}^{m_k} \mathfrak{p}_{k,j_k} = \mathfrak{W} \right\}.
\end{equation}
By definition we have
\begin{equation}
    I_K^{Fan}(X, \mathfrak{S}) = \bigsqcup_{\mathfrak{W} \in \mathcal{W}(X)} I_K^{Fan}(X, \mathfrak{S}, \mathfrak{W}),
\end{equation}
and the decomposition of the fan structure $\mathcal{B}_K(X)$ as
\begin{equation*}
    \mathcal{B}_K(X) := \bigsqcup_{\mathfrak{S} \in \mathcal{S}(X)} \bigsqcup_{\mathfrak{W} \in \mathcal{W}(X)} \bigsqcup_{\mathfrak{d} \in I_K^{Fan}(X, \mathfrak{S}, \mathfrak{W})} C_p(\mathfrak{d}).
\end{equation*}

Now we proceed as in \textbf{Case B} in the proof of Theorem \ref{thm:fanB}. We consider subsets of local characters $\bigsqcup_{\mathfrak{W} \in \mathcal{W}(X)} \bigsqcup_{\mathfrak{d} \in I_K^{Fan}(X, \mathfrak{S}, \mathfrak{W})} C_p(\mathfrak{d})$ (or equivalently $\bigsqcup_{\mathfrak{d} \in I_K^{Fan}(X, \mathfrak{S})} C_p(\mathfrak{d})$) for each choice of $\mathfrak{S} \in \mathcal{S}(X)$. We consider for each non-negative integer $n \geq 0$ the probability 
\begin{equation*}
    E_\mathfrak{S}(n) := \frac{\#\{\chi \in \mathcal{C}_p(\mathfrak{S},X) : \dim_{\mathbb{F}_2} \overline{\mathrm{Sel}}(\mathrm{Jac}(C)[2], \chi : \alpha) = n\}}{\# \mathcal{C}_p(\mathfrak{S},X)}.
\end{equation*}
Similar to Theorem \ref{thm:fanB}, by Lemma \ref{Yu:lemma5.5} (i.e. \cite[Lemma 5.5]{Yu19}), there exists some Markov operator $\mathbf{S} := [s_{i,j}]_{i,j \geq 0}$ satisfying $s_i,j = 0$ whenever $|i - j| > 2 \omega(\mathfrak{S})$ and $|i - j| \not\equiv \omega(\mathfrak{S}) \mod 2$ such that
\begin{equation*}
    E_\mathfrak{S}(n) = \begin{cases}
        \mathbf{S}(E_1^+)(n) &\text{ if } \omega(\mathfrak{S}) \equiv 0 \mod 2, \\
        \mathbf{S}(E_1^-)(n) &\text{ otherwise }.
    \end{cases}
\end{equation*}
The lemma also implies that $\rho(E_\mathfrak{S}(n) = \rho(E_1^+(n))$ if $\omega(\mathfrak{S}) \equiv 0 \mod 2$ and $\rho(E_\mathfrak{S}(n)) = 1 - \rho(E_1^-(n))$.

The order of the prime ideal factors in which we twist the Selmer structure $\overline{\mathrm{Sel}}(\mathrm{Jac}(C)[2], \chi : \alpha)$ is given by
\begin{equation*}
    \mathfrak{W}, \mathfrak{p}_{0,1}, \cdots, \mathfrak{p}_{0,m_0}, \mathfrak{p}_{1,1}, \cdots, \mathfrak{p}_{1,m_1}.
\end{equation*}
In particular, we twist the Jacobian of the hyperelliptic curve by prime ideal factors $\mathfrak{p}_{k,j_k}$ for $2 \leq k \leq 2g$ first, and then twist by $\mathfrak{p}_{0,j_0}$ and $\mathfrak{p}_{1,j_1}$ afterwards. At first glance, the dependence of norms among prime ideal factors of $\mathfrak{d}$ may seem to pose as an obstruction to swap the order of prime ideal factors with which we twist the hyperelliptic curve. Nevertheless, the sets $\overline{\mathcal{D}}_{m,k,X,2^{-a}}$ can be well approximated by disjoint unions of $m_l$ many tuples of prime ideals, whose conditions on their norms are independent from each other. Given an ideal $\mathfrak{S} \in \mathcal{S}(X)$, we define the following subset
\begin{equation}
    \overline{\mathcal{D}}_{m,k,X,2^{-a}}^{\mathfrak{S}} := \left\{ \mathfrak{d} \in \overline{\mathcal{D}}_{m,k,X,2^{-a}} \; : \; \prod_{i=1}^{m_l}  \mathfrak{q}_i = \mathfrak{S} \right\}.
\end{equation}
Given an ideal $\mathfrak{S} \in \mathcal{S}(X)$, consider the following $m_l$ tuples of prime ideals, where the notations $T$ and $M_j(2^{-a})$ are defined as in Remark \ref{remark:parametrize_fan}.
\begin{align}
    \begin{split}
        \overline{\mathcal{D}}_{m,k,X,2^{-a}}^{*, \mathfrak{S}} &:= \biggl\{ (\mathfrak{p}_1, \mathfrak{p}_2, \cdots, \mathfrak{p}_{m_l}) \in \left( \prod_{j=1}^{m_l-1} [T, M_j(2^{-a})] \right) \times \left[ \frac{X^{1-2^{-a+1}(1-2^{-m_l})}}{\mathrm{Nm}_{\mathbb{Q}}^K \mathfrak{S}}, \frac{X^{1-2^{-a}(1-2^{-m_l})}}{\mathrm{Nm}_{\mathbb{Q}}^K \mathfrak{S}} \right] \\
        & \hspace{30pt} \; : \; \mathfrak{p}_j \text{ is prime } \forall 1 \leq j \leq m_l \biggr\}.
    \end{split}
\end{align}
Then one obtains for any given $\mathfrak{S} \in \mathcal{S}(X)$ and any small enough $\epsilon > 0$,
\begin{equation*}
    \# \overline{\mathcal{D}}_{m,k,X,2^{-a}}^{\mathfrak{S}} = \# \overline{\mathcal{D}}_{m,k,X,2^{-a}}^{*, \mathfrak{S}} + O_K \left( \frac{\# \overline{\mathcal{D}}_{m,k,X,2^{-a}}^{\mathfrak{S}}}{(M_1(2^{-a}))^{1 - \epsilon}} \right).
\end{equation*}
It follows that given any collection of $m_l$ many Galois extensions $L_1, L_2, \cdots, L_{m_l}$ over $K$, the resulting asymptotic densities obtained from applying Chebotarev density theorem over $L_j/K$ consecutively to prime ideal factors $\mathfrak{p}_j$ for $1 \leq j \leq m_l$ appearing in $\overline{\mathcal{D}}_{m,k,X,2^{-a}}^{\mathfrak{S}}$ are, up to an error term of order $O_K\left( (M_1(2^{-a}))^{-\frac{1}{2} + \epsilon} \right)$ for some small enough $\epsilon > 0$, equal to those obtained from applying Chebotarev density theorem over $L_j/K$ consecutively to prime ideal factors $\mathfrak{p}_j$ for $1 \leq j \leq m_l$ appearing in $\overline{\mathcal{D}}_{m,k,X,2^{-a}}^{*,\mathfrak{S}}$. We observe that each prime ideal factor $\mathfrak{p}_j$ of $\overline{\mathcal{D}}_{m,k,X,2^{-a}}^{*,\mathfrak{S}}$ varies over an interval which is independent from any other interval. Hence, the asymptotic densities obtained over $\overline{\mathcal{D}}_{m,k,X,2^{-a}}^{*,\mathfrak{S}}$ are independent from the ordinal of prime ideal factors $\mathfrak{p}_j$ to which we apply Chebotarev density theorem consecutively. 

We may further subdivide the collection of tuples in $\overline{\mathcal{D}}_{m,k,X,2^{-a}}^{*,\mathfrak{S}}$ into $3^{m_l}$ many subsets. Such subsets are obtained by specifying whether for each $1 \leq j \leq m_l$, the prime ideal $\mathfrak{p}_j$ lies in $\mathcal{P}_0$, $\mathcal{P}_1$, or $\bigsqcup_{k=2}^{2g} \mathcal{P}_k$. Over each subset, we may apply Chebotarev density theorem in the following order. First, we apply the Chebotarev density theorem over the coordinates whose corresponding prime ideal factors satisfy $\mathfrak{p}_j \in \bigsqcup_{k=2}^{2g} \mathcal{P}_k$. Next, we apply the theorem over the coordinates whose corresponding prime ideal factors satisfy $\mathfrak{p}_j \in \mathcal{P}_0$. Lastly, we apply the theorem over rest of the coordinates, where the prime ideal factors satisfy $\mathfrak{p}_j \in \mathcal{P}_1$. By taking weighted average of asymptotic densities over such $3^{m_l}$ many subsets, we shall deduce the desired asymptotic densities over $\overline{\mathcal{D}}_{m,k,X,2^{-a}}^{\mathfrak{S}}$ up to an error term of order $O_K\left( (M_1(2^{-a}))^{-\frac{1}{2} + \epsilon} \right)$ for some small enough $\epsilon > 0$.

We now use Proposition \ref{Yu:prop6.4} (or \cite[Proposition 6.4]{Yu19}) to construct two Markov operators:
\begin{itemize}
    \item $\mathbf{W}_{\text{width }\geq 2}$: models variations of $\overline{\mathrm{Sel}}(\mathrm{Jac}(C)[2], \chi : \alpha)$ with respect to twisting by $\mathfrak{W}$. We note that the definition of $\mathbf{W}_{\text{width } \geq 2}$ depends on the choice of $\mathfrak{W}$, but such a dependence will not affect the statement of the theorem.
    \item $\mathbf{M}_{C} := \frac{\delta_0}{\delta_0 + \delta_1} \cdot \mathbf{Id} + \frac{\delta_1}{\delta_0 + \delta_1} \cdot \mathbf{M}_L$: models variations of $\overline{\mathrm{Sel}}(\mathrm{Jac}(C)[2], \chi : \alpha)$ with respect to twisting by $m_0 + m_1$ many prime ideal factors $\mathfrak{p}_{0,j_0}$ and $\mathfrak{p}_{1,j_1}$. We note that because $\delta_0 > 0$ and $\delta_1 > 0$, the Markov operator $\mathbf{M}_C$ is an irreducible aperiodic Markov operator over the countable state space $\mathbb{Z}_{\geq 0}$. Furthermore, the quantity $\frac{m_0 + m_1}{m_l}$ is a positive number for sufficiently large $X$.
\end{itemize}

Using Proposition \ref{prop:crucial1}, we have that for sufficiently large $X$,
\begin{equation} \label{eq:thm1.3_crucial2}
    \mathbb{E}[\mathbf{W}_{\text{width } \geq 2} \cdot \mathbf{S} (E_1^\pm(n))] < 2g + 6.
\end{equation}
We now use these two inequalities and the geometric ergodicity of the Markov operator $\mathbf{M}_C$ as stated in Theorem \ref{thm:geometric-ergodicity} to rewrite the desired probability distribution
\begin{equation} \label{eq:desired_prob_2}
    \frac{\#\left\{ \chi \in \bigsqcup_{\mathfrak{d} \in I_K^{Fan}(X, \mathfrak{S})}C_p(\mathfrak{d}) : \dim_{\mathbb{F}_2} \overline{\mathrm{Sel}}(\mathrm{Jac}(C)[2], \chi : \alpha) = n\right\}}{\# \bigsqcup_{\mathfrak{d} \in I_K^{Fan}(X, \mathfrak{S})} C_p(\mathfrak{d})}.
\end{equation}
Let $\{Q_m^\pm\}_{m \in \mathbb{Z}}$ be a set of non-negative real numbers  depending on the choice of $X$ that satisfies
\begin{equation*}
    \begin{cases}
        \sum_{m=0}^\infty Q_m^+ + \sum_{m=0}^\infty Q_m^- = 1, \\
        Q_m^\pm = 0 &\text{ if } m < c_1 \log \log X \text{ or } m > c_2 \log \log X.
    \end{cases}
\end{equation*}
Recall that this is the identical construction shown in the proof of Theorem \ref{thm:fanB}). Let $\epsilon \in (0,1/2)$ be a small enough number. If $\omega(\mathfrak{S})$ is even, then we have
\begin{align*}
    (\ref{eq:desired_prob_2}) &= \sum_{m=\lceil c_1 \log \log X \rceil}^{\lfloor c_2 \log \log X \rfloor} \biggl\{ Q_{m_l}^+ \cdot \biggl[ \Phi_{even} \left( \mathbf{M}_C^{m_0 + m_1}\right) \cdot \mathbf{W}_{\text{width } \geq 2} \cdot \mathbf{S} \\
    & \hspace{100pt} + \Phi_{odd} \left( \mathbf{M}_C^{m_0 + m_1} \right) \cdot \mathbf{W}_{\text{width } \geq 2} \cdot \mathbf{S}  \biggr] (E_1^+)(n) \\
    & \hspace{50pt} + Q_{m_l}^- \cdot \biggl[ \Phi_{odd} \left( \mathbf{M}_C^{m_0 + m_1} \right) \cdot \mathbf{W}_{\text{width } \geq 2} \cdot \mathbf{S} \\
    & \hspace{100pt} + \Phi_{even} \left( \mathbf{M}_C^{m_0 + m_1} \right) \cdot \mathbf{W}_{\text{width } \geq 2} \cdot \mathbf{S} \biggr](E_1^-)(n)\biggr\} \\
    &+ O \left( X^{\left(-\frac{1}{2} + \epsilon \right) \cdot \frac{1}{(\log X)^{c_2 \log 2}} \cdot \frac{1}{(\log \log X)^{\log 2}}} \cdot \log \log X \right).
\end{align*}
If $\omega(\mathfrak{S})$ is odd, then we have
\begin{align*}
    (\ref{eq:desired_prob_2}) &= \sum_{m=\lceil c_1 \log \log X \rceil}^{\lfloor c_2 \log \log X \rfloor} \biggl\{ Q_{m_l}^+ \cdot \biggl[ \Phi_{even} \left( \mathbf{M}_C^{m_0 + m_1}\right) \cdot \mathbf{W}_{\text{width } \geq 2} \cdot \mathbf{S} \\
    & \hspace{100pt} + \Phi_{odd} \left( \mathbf{M}_C^{m_0 + m_1} \right) \cdot \mathbf{W}_{\text{width } \geq 2} \cdot \mathbf{S}  \biggr] (E_1^-)(n) \\
    & \hspace{50pt} + Q_{m_l}^- \cdot \biggl[ \Phi_{odd} \left( \mathbf{M}_C^{m_0 + m_1} \right)\cdot \mathbf{W}_{\text{width } \geq 2} \cdot \mathbf{S} \\
    & \hspace{100pt} + \Phi_{even} \left( \mathbf{M}_C^{m_0 + m_1} \right) \cdot \mathbf{W}_{\text{width } \geq 2} \cdot \mathbf{S} \biggr](E_1^+)(n)\biggr\} \\
    &+ O \left( X^{\left(-\frac{1}{2} + \epsilon \right) \cdot \frac{1}{(\log X)^{c_2 \log 2}} \cdot \frac{1}{(\log \log X)^{\log 2}}} \cdot \log \log X \right).
\end{align*}
For both expressions, the error terms are obtained from computing the error term $M_1((\log \log X)^{\log 2})$ and setting $m_l = \lfloor c_2 \log \log X \rfloor$.

Let $\pi_C$ be the probability distribution over $\mathbb{Z}_{\geq 0}$ given by
\begin{equation}
    \pi_C(n) := \left( \frac{1}{2} + \frac{1}{2} \left( \rho(E_1^-) - \rho(E_1^+) \right) \right) E^+(n) + \left( \frac{1}{2} - \frac{1}{2} \left( \rho(E_1^-) - \rho(E_1^+) \right) \right) E^-(n).
\end{equation}
By Theorem \ref{thm:geometric-ergodicity}, there exists an explicitly computable rate of convergence $0 < \gamma_C < 1$ depending only on $C$ and an explicit constant $\beta(C,K)$ depending only on $C$ and $K$ such that for sufficiently large $X$ we have
\begin{align*}
    \sup_{n \in \mathbb{Z}_{\geq 0}}\left| (\ref{eq:desired_prob_2}) - \pi_C(n) \right| < & \beta(C,K) \cdot \max_{k \in \{+, -\}} \left(\mathbb{E}[V(\mathbf{W}_{\text{width } \geq 2} \cdot \mathbf{S}(E_1^k(n)))] \right) \cdot \gamma^{m_0 + m_1} \\
    &+ O \left( X^{\left(-\frac{1}{2} + \epsilon \right) \cdot \frac{1}{(\log X)^{c_2 \log 2}} \cdot \frac{1}{(\log \log X)^{\log 2}}} \cdot \log \log X \right),
\end{align*}
where $V(x) := 2^x$, and given any probability distribution $\mu: \mathbb{Z}_{\geq 0} \to [0,1]$, we define $V(\mu) := \sum_{x =0}^\infty \mu(x) \cdot 2^x$.
Using equation (\ref{eq:thm1.3_crucial2}), we obtain that for sufficiently large $X$ and $\beta'(C,K) := 2^{2g + 6} \cdot \beta(C,K)$,
\begin{equation*}
    \sup_{n \in \mathbb{Z}_{\geq 0}}\left| (\ref{eq:desired_prob_2}) - \pi_C(n) \right| < \beta'(C,K) \cdot \gamma^{m_0 + m_1} + O \left( X^{\left(-\frac{1}{2} + \epsilon \right) \cdot \frac{1}{(\log X)^{c_2 \log 2}} \cdot \frac{1}{(\log \log X)^{\log 2}}} \cdot \log \log X \right).
\end{equation*}
Because $\delta_0 + \delta_1 > 0$ by assumption, we may use effective Chebotarev density theorem with respect to the splitting field of $f(x)$ which defines the equation for $C: y^2 = f(x)$ to obtain that that $m_0 + m_1 = m \cdot (\delta_0 + \delta_1) + o(\log \log X)$. This implies 
\begin{equation*}
    \sup_{n \in \mathbb{Z}_{\geq 0}}\left| (\ref{eq:desired_prob_2}) - \pi_C(n) \right| < \beta'(C,K) \cdot (\log X)^{c_1 \cdot (\delta_0 + \delta_1) \cdot\log \gamma + o(1)} + O \left( X^{\left(-\frac{1}{2} + \epsilon \right) \cdot \frac{1}{(\log X)^{c_2 \log 2}} \cdot \frac{1}{(\log \log X)^{\log 2}}} \cdot \log \log X \right),
\end{equation*}
the right hand side of which converges to $0$ as $X$ grows arbitrarily large, because $0 < \gamma < 1$. By setting $\delta(C/K) := \frac{1}{2} \cdot \left( \rho(E_1^-) - \rho(E_1^+) \right)$, taking disjoint unions of $I_K^{Fan}(X, \mathfrak{S})$ as $\mathfrak{S}$ varies over $\mathcal{S}(X)$, and using Theorem \ref{thm:Erdos-Kac} and Proposition \ref{prop:IfanvsI}, we obtain
\begin{align*}
    & \hspace{15pt} \lim_{X \to \infty} \frac{\#\left\{ \chi \in \mathcal{C}_K(X) : \dim_{\mathbb{F}_2} \overline{\mathrm{Sel}}_2(\mathrm{Jac}(C)[2], \chi : \alpha) = n \right\}}{\# \mathcal{C}_K(X)} \\
    &= \left( \frac{1}{2} + \delta(C/K) \right) E^+(n) + \left( \frac{1}{2} - \delta(C/K) \right) E^-(n).
\end{align*}
We obtain the statement of the theorem by using \cite[Theorem 4.14]{PR12}, where we identify
\begin{equation*}
    \frac{\mathrm{Sel}_2(\mathrm{Jac}(C^\chi)/K)}{\Sha^1(K, \mathrm{Jac}(C)[2])} = \overline{\mathrm{Sel}}(\mathrm{Jac}(C)[2], \chi : \alpha),
\end{equation*}
and $\alpha$ is defined as
\begin{equation*}
    \alpha(\overline{\chi_v})_{v \in \Sigma_C(\chi)} := \prod_{v \in \Sigma_C(\chi)} \frac{\mathrm{Jac}(C^{\chi_v})(K_v)}{2\mathrm{Jac}(C^{\chi_v})(K_v)}.
\end{equation*}
\end{proof}

\begin{remark}
    We may relax the definition of $\overline{\mathcal{D}}_{m,k,X,a}$ as presented in Remark \ref{remark:improve_rate_convergence} to improve the rate of convergence of the limits in Theorem \ref{thm:hyperelliptic} to be of order $O(1/(\log X)^c)$ for some explicitly computable constant $c \in (0,1)$.
\end{remark}

\begin{remark}
    Theorem \ref{thm:hyperelliptic} proves that depending on the cardinality of $\Sha^1(K, \mathrm{Jac}(C)[2])$, the average size of 2-selmer groups of quadratic twist families of a given hyperelliptic curve $C$ converges to either $3$ or $6$ assuming ERH. Our result hence verifies \cite[Conjecture 1.8]{PR12} conditional on ERH. We also note that Theorem \ref{thm:hyperelliptic} verifies Alex Smith's monumental results on the distribution of $2$-Selmer groups of quadratic twist families of hyperelliptic curves (though assuming ERH but with possible exponential improvements on the rate of convergence), see for example \cite[Theorem 2.14, Example 3.7]{Smith2}. On the other hand, Theorem \ref{thm:hyperelliptic} also generalizes Smith's result for quadratic twist families of hyperelliptic curves satisfying $\#\Sha^1(K, \mathrm{Jac}(C)[2]) = 2$ conditional on ERH. As stated in \cite[Case 2.12, Case 2.13]{Smith2}, the quadratic twist families Smith considers require that there are no common Selmer elements lying in every $\mathrm{Sel}_2(\mathrm{Jac}(C^\chi)/K)$.
\end{remark}

\begin{remark}
    Similar to the proof of Theorem \ref{thm:intro_superelliptic}, we can obtain average uniform Mordell-Lang results for quadratic twist families of $C$ assuming ERH. In fact, our result implies that as the genus of the given hyperelliptic curve $C$ grows arbitrarily large and assuming ERH, we can utilize Chabauty-Coleman-Stoll method to compute explicit upper bounds on the nubmer of $K$-rational solutions for all but $O \left(2^{-(g-1)(g-2)/2} \right)$ of quadratic twist families of $C$.
\end{remark}

We conclude this manuscript by proving Theorem \ref{thm:intro_hyperelliptic} in the introduction.

\begin{proof}[Proof of Theorem \ref{thm:intro_hyperelliptic}]
    Let $r_1, r_2$ be the number of real/complex embeddings of $K$. Then we have
    \begin{equation}
        N_2(K,X) = 2^{r_1 + r_2 - 1} \cdot \# \left\{F := \text{ Fixed Field of ker} \chi \; : \; \chi \in \mathcal{C}_2(X) \right\}.
    \end{equation}
    We have
    \begin{equation}
        \mathrm{rk}(\mathrm{Jac}(C)/F) - \mathrm{rk}(\mathrm{Jac}(C)/K) = \dim_{\mathbb{F}_2} \mathrm{Sel}_2(\mathrm{Jac}(C^\chi)/K) - \dim_{\mathbb{F}_2} \Sha^1(K, \mathrm{Jac}(C)[2]).
    \end{equation}
    The result follows from combining the above two equations with Theorem \ref{thm:hyperelliptic} and Definition \ref{def:Markov}.
\end{proof}

\section{Summary} \label{sec:summary}

The overarching strategy to prove Theorems \ref{thm:intro}, \ref{thm:intro_superelliptic}, and \ref{thm:intro_hyperelliptic} is summarized in the table below. The first column states four keywords - Discrete Time, Data, Model, and Asymptotics - which summarizes and encapsulates the procedural flow we use in this manuscript to compute the distribution of desired Selmer groups. The second column ``Arithmetic Statistics'' summarizes each smaller steps performed in the proof of the main theorems. The third column ``Probability Theory'' represents what concepts from probability theory or data analysis are related to each smaller steps performed in the proof of the main theorems. Lastly, the fourth column ``Tools'' gives a summary of tools from analytic number theory and stochastic processes utilized to prove the desired results.

\begin{table}[ht]
    \centering
    \begin{tabular}{|c||c||c|c|}
        \hline
        \textbf{Concept} & \textbf{Arith. Stat.} & \textbf{Prob. Theory} & \textbf{Tools} \\
        \hline
        \hline
        \multirow{4}{*}{\textbf{Discrete Time}} & $\omega(d) = 1, 2, 3, \cdots$ & Time variable & Prime factorization \\
        & $\omega(d) \sim \log \log X$ & Average time & Erd\"os-Kac Theorem \\
        & \textcolor{blue}{$c_1 \leq \frac{\omega(d)}{\log\log X} \leq c_2$} & \textbf{\textcolor{blue}{100\% of time}} & \textbf{\textcolor{blue}{Large deviation estimates}} \\
        & \textcolor{blue}{$I_K^{Fan}(X)$} & \textbf{\textcolor{blue}{Rearrangement}} & \textbf{\textcolor{blue}{Sathe-Selberg, Sieve}} \\
        \hline
        \hline
        \multirow{3}{*}{\textbf{Data}} & $\mathbf{J}_d$ & Time series data & Jacobians \\
        & $\mathbf{J}_d[2] \cong \mathbf{J}[2]$ & Common domain & Galois-equivariance \\
        & $\mathrm{Sel}_2(\mathbf{J}_d/K)$ & Measurements & Selmer groups \\
        \hline
        \hline
        \multirow{3}{*}{\textbf{Model}} & $\mathbf{M}_L$ & Markov operator & Chebotarev density theorem \\
        & weighted $\mathbf{M}_L$ sums & Irreducibility, Aperiodicity & Large $\mathrm{Gal}(K(\mathbf{J}[2])/K)$ \\
        & \textcolor{blue}{$p_1 p_2 \leftrightarrow p_2 p_1$} & \textbf{\textcolor{blue}{Swap operation order}} & \textbf{\textcolor{blue}{Extended RH}} \\
        \hline
        \hline
        \multirow{3}{*}{\textbf{Asymptotics}} & $PR$ & Stationary Distribution & Irreducibility, Aperiodicity \\
        & \textcolor{blue}{$1/(\log X)^c$} & \textbf{\textcolor{blue}{Rate of Convergence}} & \textbf{\textcolor{blue}{Geometric ergodicity}} \\
        & \textcolor{blue}{$\mathbb{E}[\mathbf{W}_{\text{ width} \geq 2}] < \infty$} & \textbf{\textcolor{blue}{Invariance w.r.t. order}} & \textbf{\textcolor{blue}{Large ${\mathrm{Gal}(K(\mathrm{J}[2])/K)}$}}  \\
        \hline
        \hline
    \end{tabular}
    \caption{A table for quadratic twist families of Jacobians of hyperelliptic curves, which summarizes overarching concepts, relevant comparisons between concepts in probability theory and arithmetic statistics, as well as relevant tools utilized to obtain such relations.}
    \label{tab:summary}
\end{table}

We pay particular focus to computing the distribution of 2-Selmer groups of quadratic twist families of Jacobians of hyperelliptic curves over number fields. Here we use the abbreviation $\mathbf{J}_d := \mathrm{Jac}(C_d)$ and $\mathbf{J} := \mathrm{Jac}(C)$. The novel contributions made in this manuscript is written in \textbf{\textcolor{blue}{blue bolded text}}. To obtain Theorems \ref{thm:intro} and \ref{thm:intro_superelliptic}, we can alter the respective abelian varieties, torsion submodules, and Selmer groups as follows.
\begin{itemize}
    \item \textbf{Theorem \ref{thm:intro}}: We substitute the \textbf{Data} and \textbf{Asymptotics} row as in Table \ref{table:thmintro}.
\begin{table}[ht]
    \centering
    \begin{tabular}{|c||c||c|c|}
        \hline
        \textbf{Concept} & \textbf{Prob. Theory} & \textbf{Arith. Stat.} & \textbf{Tools} \\
        \hline
        \hline
        \multirow{3}{*}{\textbf{Data}} & Time series data & $A^{\chi}$ & Weil restriction \\
        & Common domain & $A^{\chi}[1-\sigma] \cong E[p]$ & Galois-equivariance \\
        & Measurements & $\mathrm{Sel}_{1-\sigma}(A^{\chi}/K)$ & Selmer groups \\
        \hline
        \hline
        \multirow{3}{*}{\textbf{Asymptotics}} & $PR$ & Stationary Distribution & Irreducibility, Aperiodicity \\
        & \textcolor{blue}{$1/(\log X)^c$} & \textbf{\textcolor{blue}{Rate of Convergence}} & \textbf{\textcolor{blue}{Geometric ergodicity}} \\
        & \textcolor{blue}{$\mathbb{E}[\mathbf{P}^\pm_{max} \cdot S] < \infty$} & \textbf{\textcolor{blue}{Invariance w.r.t. order}} & \textbf{\textcolor{blue}{Large ${\mathrm{Gal}(K(\mathrm{E}[p])/K)}$}}  \\
        \hline
        \hline
    \end{tabular}
    \caption{A table for rank growths of elliptic curves over cyclic prime extensions.}
    \label{table:thmintro}
\end{table}
    \item \textbf{Theorem \ref{thm:intro_superelliptic}}: We substitute the \textbf{Data} and \textbf{Asymptotics} row as in Table \ref{table:thmsuper}.
    \begin{table}[ht]
    \centering
    \begin{tabular}{|c||c||c|c|}
        \hline
        \textbf{Concept} & \textbf{Prob. Theory} & \textbf{Arith. Stat.} & \textbf{Tools} \\
        \hline
        \hline
        \multirow{3}{*}{\textbf{Data}} & Time series data & $\mathbf{J}_d$ & Jacobians \\
        & Common domain & $\mathbf{J}_d[1-\zeta_p] \cong \mathbf{J}[1-\zeta_p]$ & Galois-equivariance \\
        & Measurements & $\mathrm{Sel}_{1-\zeta_p}(\mathbf{J}_d/K)$ & Selmer groups \\
        \hline
        \hline
        \multirow{3}{*}{\textbf{Asymptotics}} & $PR$ & Stationary Distribution & Irreducibility, Aperiodicity \\
        & \textcolor{blue}{$1/(\log X)^c$} & \textbf{\textcolor{blue}{Rate of Convergence}} & \textbf{\textcolor{blue}{Geometric ergodicity}} \\
        & \textcolor{blue}{$\mathbb{E}[\mathbf{P}^\pm_{max} \cdot S] < \infty$} & \textbf{\textcolor{blue}{Invariance w.r.t. order}} & \textbf{\textcolor{blue}{Large ${\mathrm{Gal}(K(\mathbf{J}[1-\zeta_p])/K)}$}}  \\
        \hline
        \hline
    \end{tabular}
    \caption{A table for twist families of superelliptic curves. Here we abbreviate as before $\mathbf{J}_d := \mathrm{Jac}(C_d)$ and $\mathbf{J} := \mathrm{Jac}(C)$.}
    \label{table:thmsuper}
\end{table}
\end{itemize}

The previous works by Klagsbrun, Mazur, and Rubin \cite{KMR14} and Yu \cite{Yu19} constitute the parts of the table written in black text. In particular, these works allow us to obtain desired distributions of $\mathrm{Sel}_{1-\sigma}(A^{\chi}/K)$ and $\mathrm{Sel}_{1-\zeta_p}(\mathrm{Jac}(C_d)/K)$, but where $d$ is non-canonically ordered by the fan structure. 

There are three contributions this manuscript proposes.
\begin{itemize}
    \item We can utilize large deviation estimates, Sathe-Selberg theorem, and Selberg's sieve to order $d$ based on the norm of its radical of the ideal generated by $d$.
    \item We can utilize the extended Riemann hypothesis to swap the order of prime ideals used to twist Selmer structures of a given fixed Galois module of any even dimension satisfying some mild conditions.
    \item Using geometric ergodicity of Markov operators $\mathbf{M}_L$, we compute the rate of convergence to the Poonen-Rains distribution, as well as guarantee that swapping the order of twists by prime ideals does not alter the asymptotic distribution.
\end{itemize}
By doing so, we prove Theorems \ref{thm:intro}, \ref{thm:intro_superelliptic} under assuming ERH, where $d$ is ordered by the norm of its radical of the ideal generated by $d$. Furthermore, we obtain Theorem \ref{thm:intro_hyperelliptic} under assuming ERH, without assuming any heuristic assumptions on the distribution of local Kummer maps defining 2-Selmer groups of quadratic twist families of hyperelliptic curves. 

An upcoming work by the authors will demonstrate that one can use analogous techniques to compute the distribution of 2-Selmer groups of quadratic twist families of Jacobians of hyperelliptic curves satisfying some mild conditions over global function fields $K = \mathbb{F}_q(t)$ of characteristic coprime to $6$. We expect that one can obtain the desired Poonen-Rains distribution (up to contributions from $\Sha^1(\mathrm{Jac}(C)[2])$) without any conditions on the size of the constant field of $K$. We hope such results will serve as a complement to the groundbreaking result by Aaron Landesman and Ishan Levy \cite{LL25}, whose work can be used to compute the distribution of odd Selmer groups of quadratic twist families of Jacobians of hyperelliptic curves satisfying some mild conditions over $K$, where the size of the constant field is at least some explicitly computable constant. Further generalizations the authors hope to understand include obtaining statements on rank growths of principally polarized abelian varieties over cyclic prime extensions of global fields, assuming appropriate versions of ERH.

\newpage

\nocite{*}
\bibliographystyle{alpha}
\bibliography{main}

\begin{thebibliography}{KMR14}

\bibitem[Deb19]{Debaene19}
Korneel Debaene.
\newblock Explicit counting of ideals and a {Brun-Titchmarsh} inequality for the {Chebotarev} density theorem.
\newblock {\em International Journal of Number Theory}, 15(5):883--905, 2019.

\bibitem[DGH21]{DGH21}
Vesselin Dimitrov, Ziyang Gao, and Philipp Habegger.
\newblock Uniformity in {Mordell–Lang} for curves.
\newblock {\em Annals of Mathematics}, 194:237--298, 2021.

\bibitem[EL23]{EL23}
Jordan Ellenberg and Aaron Landesman.
\newblock Homological stability for generalized {Hurwitz} spaces and {Selmer} groups in quadratic twist families over function fields, 2023.
\newblock arXiv:2310.16286.

\bibitem[GL22]{GarciaLee22}
Stephan~Ramon Garcia and Ethan~Simpson Lee.
\newblock Unconditional explicit {M}ertens' theorems for number fields and {D}edekind zeta residue bounds.
\newblock {\em Ramanujan J.}, 57(3):1169--1191, 2022.

\bibitem[HB93]{HeathBrown}
D.R. Heath-Brown.
\newblock The size of {S}elmer groups for the congruent number problem.
\newblock {\em Inventiones Mathematicae}, 111(1):171--196, 1993.

\bibitem[Kan13]{Kane2013}
Daniel Kane.
\newblock On the ranks of the 2-{S}elmer groups of twists of a given elliptic curve.
\newblock {\em Algebra Number Theory}, 7(5):1253--1279, 2013.

\bibitem[KMR13]{KMR13}
Zev Klagsbrun, Barry Mazur, and Karl Rubin.
\newblock Disparity in {S}elmer ranks of quadratic twists of elliptic curves.
\newblock {\em Annals of Mathematics}, 178:287--320, 2013.

\bibitem[KMR14]{KMR14}
Zev Klagsbrun, Barry Mazur, and Karl Rubin.
\newblock A {Markov model for Selmer ranks} in families of twists.
\newblock {\em Compositio Mathematica}, 150:1077--1106, 2014.

\bibitem[Kob06]{Kob06}
Emi Kobayashi.
\newblock A remark on the {Mordell-Weil} rank of elliptic curves over the maximal abelian extension of the rational number field.
\newblock {\em Tokyo Journal of Mathematics}, 29(2), 2006.

\bibitem[KP23]{KoymansPaganoFairCounting}
Peter Koymans and Carlo Pagano.
\newblock Malle's conjecture for fair counting functions, 2023.
\newblock To appear in ANT.

\bibitem[KP25]{KP25-LD}
Daniel Keliher and Sun~Woo Park.
\newblock Large deviation principles for abelian monoids, 2025.
\newblock arXiv:2507.08173. To appear at Electronic Communications in Probability.

\bibitem[Lan94]{LangANT94}
Serge Lang.
\newblock {\em Algebraic number theory}, volume 110 of {\em Graduate Texts in Mathematics}.
\newblock Springer-Verlag, New York, second edition, 1994.

\bibitem[Liu04a]{Li04}
Yu-Ru Liu.
\newblock A generalization of the {Erd\"os-Kac} theorem and its applications.
\newblock {\em Canadian Mathematical Bulletin}, 47(4):589--606, 2004.

\bibitem[Liu04b]{Li04-2}
Yu-Ru Liu.
\newblock A generalization of the {Turan} theorem and its applications.
\newblock {\em Canadian Mathematical Bulletin}, 47:573--588, 2004.

\bibitem[LL25]{LL25}
Aaron Landesman and Ishan Levy.
\newblock The stable homology of {Hurwitz} modules and applications, 2025.
\newblock arXiv:2510.02068.

\bibitem[McM13]{McMullen13}
Curtis McMullen.
\newblock Complex analysis on riemann surfaces, 2013.
\newblock Course notes available at https://math.berkeley.edu/~ianagol/complexriemann.pdf.

\bibitem[MR07]{MR07}
Barry Mazur and Karl Rubin.
\newblock Finding large {S}elmer rank via an arithmetic theory of local constants.
\newblock {\em Annals of Mathematics}, 166:579--612, 2007.

\bibitem[MT93]{MT93}
Sean Meyn and Richard Tweedie.
\newblock {\em Markov chains and stochastic stability}.
\newblock Springer-Verlag, Berlin, Germany, 1993.

\bibitem[MV06]{Montgomery&Vaughan06}
Hugh Montgomery and Robert Vaughan.
\newblock {\em Multiplicative Number theory 1. Classical Theory: Cambridge studies in advanced mathematics 97}.
\newblock Cambridge University Press, 2006.

\bibitem[MZ16]{MZ16}
Behzad Mehrdad and Lingjiong Zhu.
\newblock Moderate and large deviations for the {E}rd\"os-{K}ac theorem.
\newblock {\em The Quarterly Journal of Mathematics}, 67(1):147--160, 2016.

\bibitem[Mä93]{MakiDensity}
Sirpa Mäki.
\newblock The conductor density of abelian number fields.
\newblock {\em Journal of the London Mathematical Society}, s2-47(1):18--30, 1993.

\bibitem[Nor76]{Norton76}
Karl~K. Norton.
\newblock On the number of restricted prime factors of an integer {I}.
\newblock {\em Illinois Journal of Mathematics}, 20:681--705, 1976.

\bibitem[Par25a]{Park25-1}
Sun~Woo Park.
\newblock {Mordell--Lang} and disparate {Selmer} ranks of odd twists of some superelliptic curves over global function fields, 2025.
\newblock arXiv:2504.20594.

\bibitem[Par25b]{park2022prime}
Sun~Woo Park.
\newblock On the prime {S}elmer ranks of cyclic prime twist families of elliptic curves over global function fields.
\newblock {\em Compositio Mathematica}, 161(12):3277--3320, 2025.

\bibitem[PR12a]{PR12}
Bjorn Poonen and Eric Rains.
\newblock Random maximal isotropic subspaces and {S}elmer groups.
\newblock {\em Journal of the American Mathematical Society}, 25(1):245--269, 2012.

\bibitem[PR12b]{PR12-theta}
Bjorn Poonen and Eric Rains.
\newblock Self cup products and the theta characteristic torsor.
\newblock {\em Mathematical Research Letters}, 18(6):1305--1318, 2012.

\bibitem[PS99]{PS99}
Bjorn Poonen and Michael Stoll.
\newblock The {Cassels-Tate} pairing on polarized abelian varieties.
\newblock {\em Annals of Mathematics}, 150(3):1109--1149, 1999.

\bibitem[PS25]{PS25}
Sun~Woo Park and Efthymios Sofos.
\newblock On the application of large deviation estimates to local solubility in families of varieties, 2025.
\newblock arXiv:2507.08173.

\bibitem[PT25]{PT25}
Jinzhao Pan and Ye~Tian.
\newblock On the distribution of {2-Selmer} ranks of quadratic twists of elliptic curves over {Q}, 2025.
\newblock arXiv:2503.21462.

\bibitem[Ros99]{Rosen}
Michael Rosen.
\newblock A generalization of {M}ertens' theorem.
\newblock {\em J. Ramanujan Math. Soc.}, 14(1):1--19, 1999.

\bibitem[Sat53]{Sathe53}
L.G. Sathe.
\newblock On a problem of {Hardy} on the distribution of integers having a given number of prime factors.
\newblock {\em The Journal of the Indian Mathematical Society}, 17:63--141, 1953.

\bibitem[Sch98]{Sch98}
Edward Schaefer.
\newblock {Computing a Selmer group of a Jacobian} using functions on the curve.
\newblock {\em Mathematische Annalen}, 310:447--471, 1998.

\bibitem[SD08]{SDtwists}
Peter Swinnerton-Dyer.
\newblock The effect of twisting on the 2-{S}elmer group.
\newblock {\em Math. Proc. Cambridge Philos. Soc.}, 145(3):513--526, 2008.

\bibitem[Sel54]{Selberg54}
A.~Selberg.
\newblock Note on a paper by l.g.sathe.
\newblock {\em The Journal of the Indian Mathematical Society}, 18:83--87, 1954.

\bibitem[Ser81]{Serre81}
Jean-Pierre Serre.
\newblock Quelques applications du théorème de densité de {Chebotarev}.
\newblock {\em Publications Mathématiques de l'IHÉS}, 54:123--201, 1981.

\bibitem[Smi26a]{Smith1}
Alexander Smith.
\newblock The distribution of $\ell^{\infty}$-{S}elmer groups in degree $\ell$ twist families {I}.
\newblock {\em Journal of the American Mathematical Society}, 39(1):1--72, 2026.

\bibitem[Smi26b]{Smith2}
Alexander Smith.
\newblock The distribution of $\ell^{\infty}$-{S}elmer groups in degree $\ell$ twist families {II}.
\newblock {\em Journal of the American Mathematical Society}, 39(2):453--514, 2026.

\bibitem[Wu96]{Wu96}
Jie Wu.
\newblock A sharpening of effective formulas of {S}elberg–{D}elange type for some arithmetic functions on the semigroup {$G_K$}.
\newblock {\em Journal of Number Theory}, 59(1):1--19, 1996.

\bibitem[Yu16]{Yu16}
Myungjun Yu.
\newblock {Selmer ranks of twists of hyperelliptic curves and superelliptic curves}.
\newblock {\em Journal of Number Theory}, 160:148--185, 2016.

\bibitem[Yu19]{Yu19}
Myungjun Yu.
\newblock {The distribution of Selmer ranks of quadratic twists of Jacobians of hyperelliptic curves}.
\newblock {\em Mathematical Research Letters}, 26(4):1217--1250, 2019.

\end{thebibliography}

\end{document}